\newtheorem{tm}{tm}[section]
\newtheorem{theorem}[tm]{Theorem}
\newtheorem{lemma}[tm]{Lemma}
\newtheorem{corollary}[tm]{Corollary}
\newtheorem{proposition}[tm]{Proposition}
\newtheorem{example}[tm]{Example}
\newcommand {\R} {\ensuremath{\mathbb{R}}}
\newcommand {\N} {\ensuremath{\mathbb{N}}}
\newcommand {\CC} {\ensuremath{\mathbb{C}}}
\newcommand{\process}[1]{\{#1_t\}_{t\geq0}}
\newcommand{\chain}[1]{\{#1_n\}_{n\geq0}}
\numberwithin{equation}{section}
\def\be{\begin{equation}}
\def\ee{\end{equation}}
\begin{document}

 \title{On Transience of L\'evy-Type Processes}
 \author{Nikola Sandri\'{c}\\
Institut f\"ur Mathematische Stochastik\\ Fachrichtung Mathematik, Technische Universit\"at Dresden, 01062 Dresden, Germany\\
and\\
Department of Mathematics\\
         Faculty of Civil Engineering, University of Zagreb, 10000 Zagreb,
         Croatia \\
        Email: nsandric@grad.hr}

 \maketitle
\begin{center}
{
\medskip

} \end{center}

\begin{abstract}
In this paper, we study weak and strong transience of a class of Feller processes associated with pseudo-differential operators, the so-called
L\'evy-type processes. As a main result, we derive  Chung-Fuchs type
conditions (in terms of the symbol of the corresponding pseudo-differential operator) for these properties, which are sharp for L\'evy processes. Also,  as a consequence, we discuss
 the weak and strong transience  with respect to the dimension of the state space
and Pruitt indices, thus generalizing some well-known results related to elliptic diffusion and stable L\'evy processes. Finally, in the
 case when the symbol is radial (in the co-variable) we provide conditions for the weak and strong transience in terms of the L\'evy measures.
\end{abstract}

\noindent{\small \textbf{AMS 2010 Mathematics Subject Classification:} 60J25, 60J75, 60G17} \smallskip

\noindent {\small \textbf{Keywords and phrases:} L\'evy-type process, strong transience, symbol,
transience, weak transience}

%
%
%
%


\section{Introduction}
Let  $(\process{L},\mathbb{P})$ be a $d$-dimensional L\'evy process.
The process $\process{L}$ is said to be \emph{transient} if $$\mathbb{P}\left(\lim_{t\longrightarrow\infty}|L_t|=\infty\right)=1,$$ and \emph{recurrent} if    $$\mathbb{P}\left(\liminf_{t\longrightarrow\infty}|L_t|=0\right)=1.$$ It is well known that every L\'evy process is either transient or recurrent (see \cite[Theorem 35.3]{Sato-Book-1999}).  The  transience and recurrence of L\'evy processes can  also be (equivalently) characterized through  last exit times.
Let $O\subseteq\R^{d}$ be arbitrary. Define $L_O:=\sup\{t\geq0:L_t\in O\}$, the last exit time of $\process{L}$ from the set $O$. Clearly, if $O$ is open, then,  since $\process{L}$ has c\`adl\`ag sample paths, $L_O$ becomes a random variable. Now, we easily see that $\process{L}$ is transient if, and only if, $L_{B(0,r)}<\infty$ $\mathbb{P}$-a.s. for every $r>0$. Here, $B(x,r)$ denotes the open ball around $x\in\R^{d}$ of radius $r>0$. The previous observation suggests that the last exit times might be  suitable objects for further and deeper analysis of the transience property. Indeed,  in \cite{Sato-Watanabe-2004}
 it has been shown that for a (transient) L\'evy process $\process{L}$  and any $\kappa>0$,  $\mathbb{E}[L^{\kappa}_{B(0,r)}]$ is either finite or infinite for every $r>0$ (see also \cite{Hawkes-1977} for the case of symmetric L\'evy processes). Accordingly, a (transient) L\'evy process $\process{L}$ is said to be \emph{$\kappa$-weakly transient} if
 $$\mathbb{E}[L^{\kappa}_{B(0,r)}]=\infty\quad \textrm{for all}\ r>0,$$ and  \emph{$\kappa$-strongly transient} if $$\mathbb{E}[L^{\kappa}_{B(0,r)}]<\infty\quad \textrm{for all}\ r>0.$$
 Also, in the same reference, the authors have proved that the above conditions are equivalent to $$\int_0^{\infty}t^{\kappa}\, \mathbb{P}(L_t\in B(0,r))dt=\infty\quad \textrm{for all}\ r>0$$ and
 $$\int_0^{\infty}t^{\kappa}\, \mathbb{P}(L_t\in B(0,r))dt<\infty\quad \textrm{for all}\ r>0,$$ respectively.
However, the above characterizations  are in general not too operable, that is,  it is practically very hard to say (based just on the above characterizations) whether a given L\'evy process is $\kappa$-weakly or $\kappa$-strongly transient.  Therefore,  they
also derive certain analytic conditions for theses properties: (i) Chung-Fuchs type conditions (in terms of the corresponding characteristic exponent)  and,
 in the one-dimensional symmetric case, (ii) conditions in terms of the underlying L\'evy measure.
 Let us also remark that in their follow up paper \cite{Sato-Watanabe-2005} they have made a deep analysis of the $\kappa$-weak and $\kappa$-strong transience of (semi-)stable L\'evy processes (see also \cite{Takeuchi-1967} for the case of rotationally invariant stable L\'evy processes).
 The main goal  of this
paper is to extend the notion of the $\kappa$-weak and $\kappa$-strong transience, and derive certain more operable (analytic) conditions for these properties (which generalize the conditions presented in \cite{Sato-Watanabe-2004}), to a class of Feller processes, the so-called L\'evy-type processes.

A $d$-dimensional Feller process is a strong Markov process  $(\process{F},\{\mathbb{P}^{x}\}_{x\in\R^{d}})$
with state space $(\R^{d},\mathcal{B}(\R^{d}))$ whose associated operator semigroup $\process{P}$,
$$P_tf(x):= \mathbb{E}^{x}[f(F_t)],\quad t\geq0,\ x\in\R^{d},\  f\in C_\infty(\R^{d}),$$  forms a \emph{Feller semigroup}. This means that
\begin{itemize}
  \item [(i)] $\process{P}$ enjoys the \emph{Feller property}, that is,  $P_t(C_\infty(\R^{d}))\subseteq C_\infty(\R^{d})$ for all $t\geq0$;
  \item [(ii)] $\process{P}$ is \emph{strongly continuous}, that is, $\lim_{t\longrightarrow0}||P_tf-f||_{\infty}=0$ for all $f\in
  C_\infty(\R^{d})$.
\end{itemize}
Here, $\mathcal{B}(\R^{d})$, $C_\infty(\R^{d})$ and $||\cdot||_\infty$ denote the Borel $\sigma$-algebra on $\R^{d}$,
 space of continuous functions vanishing at infinity and  supremum norm, respectively.   Note that
every Feller semigroup $\process{P}$  can be uniquely extended to the space of bounded and Borel measurable functions
$B_b(\R^{d})$ (see \cite[Section 3]{Schilling-Positivity-1998}). For notational
simplicity, we denote this extension again by $\process{P}$. Also,
let us remark that every Feller process  has c\`adl\`ag sample paths (see  \cite[Theorems 3.4.19] {Jacob-Book-III-2005}). The \emph{infinitesimal generator}
$(\mathcal{A},\mathcal{D}_{\mathcal{A}})$ of a Feller process $\process{F}$
(or of the corresponding Feller semigroup $\process{P}$) is a linear operator
$\mathcal{A}:\mathcal{D}_{\mathcal{A}}\longrightarrow C_\infty(\R^{d})$
defined by
$$\mathcal{A}f:=
  \lim_{t\longrightarrow0}\frac{P_tf-f}{t},\quad f\in\mathcal{D}_{\mathcal{A}}:=\left\{f\in C_\infty(\R^{d}):
\lim_{t\longrightarrow0}\frac{P_t f-f}{t} \ \textrm{exists in}\
||\cdot||_\infty\right\}.
$$
Let $C_c^{\infty}(\R^{d})$ be the space of smooth functions
with compact support. Under the assumption
\begin{enumerate}
  \item [(\textbf{C1})] $C_c^{\infty}(\R^{d})\subseteq\mathcal{D}_{\mathcal{A}}$,
\end{enumerate}
 in \cite[Theorem 3.4]{Courrege-1965} it has been shown that
$\mathcal{A}|_{C_c^{\infty}(\R^{d})}$ is a \emph{pseudo-differential
operator}, that is, it can be written in the form
\begin{equation}\label{eq1.1}\mathcal{A}|_{C_c^{\infty}(\R^{d})}f(x) = -\int_{\R^{d}}q(x,\xi)e^{i\langle \xi,x\rangle}
\mathcal{F}(f)(\xi) d\xi,\end{equation}  where $\mathcal{F}(f)(\xi):=
(2\pi)^{-d} \int_{\R^{d}} e^{-i\langle\xi,x\rangle} f(x) dx$ denotes
the Fourier transform of the function $f(x)$. The function $q :
\R^{d}\times \R^{d}\longrightarrow \CC$ is called  the \emph{symbol}
of the pseudo-differential operator. It is measurable and locally
bounded in $(x,\xi)$ and continuous and negative definite as a
function of $\xi$. Hence, by \cite[Theorem 3.7.7]{Jacob-Book-I-2001}, the
function $\xi\longmapsto q(x,\xi)$ has for each $x\in\R^{d}$ the
following L\'{e}vy-Khintchine representation $$q(x,\xi) =a(x)-
i\langle \xi,b(x)\rangle + \frac{1}{2}\langle\xi,C(x)\xi\rangle -
\int_{\R^{d}}\left(e^{i\langle\xi,y\rangle}-1-i\langle\xi,y\rangle1_{B(0,1)}(y)\right)\nu(x,dy),$$
where $a(x)$ is a nonnegative Borel measurable function, $b(x)$ is
an $\R^{d}$-valued Borel measurable function,
$C(x):=(c_{ij}(x))_{1\leq i,j\leq d}$ is a symmetric non-negative
definite $d\times d$ matrix-valued Borel measurable function and $\nu(x,dy)$ is a Borel kernel on $\R^{d}\times
\mathcal{B}(\R^{d})$, called the \emph{L\'evy measure}, satisfying
$$\nu(x,\{0\})=0\quad \textrm{and} \quad \int_{\R^{d}}\min\{1,|y|^{2}\}\nu(x,dy)<\infty,\quad x\in\R^{d}.$$
The quadruple
$(a(x),b(x),C(x),\nu(x,dy))$ is called the \emph{L\'{e}vy quadruple}
of the pseudo-differential operator
$\mathcal{A}|_{C_c^{\infty}(\R^{d})}$ (or of the symbol $q(x,\xi)$).
In the sequel we also assume the following two conditions on the symbol $q(x,\xi)$:
\begin{itemize}
  \item [(\textbf{C2})] $||q(\cdot,\xi)||_\infty\leq c(1+|\xi|^{2})$ for all $\xi\in\R^{d}$ and some
  $c\geq0$;
  \item [(\textbf{C3})] $q(x,0)=0$ for all $x\in\R^{d}.$
\end{itemize}
The condition in (\textbf{C2})
 means that $q(x,\xi)$ has bounded coefficients
 (see \cite[Lemma 2.1]{Schilling-PTRF-1998}), while, according to \cite[Theorem 5.2]{Schilling-Positivity-1998}, (\textbf{C3}) (together with (\textbf{C2})) implies that $\process{F}$ is \emph{conservative}, that is, $\mathbb{P}^{x}(F_t\in\R^{d})=1$ for all $t\geq0$ and all $x\in\R^{d}.$ Note that the conservativeness is a natural assumption in the context of problems we are concerned in this paper.
Further, observe that in the case when the symbol $q(x,\xi)$ does not depend
on the variable $x\in\R^{d}$, $\process{F}$ becomes a L\'evy
process.
Moreover, every L\'evy process is uniquely and completely
characterized through its corresponding symbol (see \cite[Theorems
7.10 and 8.1]{Sato-Book-1999}). According to this, it is not hard to
check that every L\'evy process satisfies condition (\textbf{C1})  (see \cite[Theorem 31.5]{Sato-Book-1999}).
Thus, the class of processes we consider in this paper contains a subclass
 of L\'evy processes. Let us also remark here that, unlike in the case of L\'evy processes, it is not possible to associate a Feller process to every symbol (see for \cite{Bottcher-Schilling-Wang-2013} details).
  Throughout this paper, the symbol $\process{F}$ denotes a Feller
process satisfying conditions (\textbf{C1}), (\textbf{C2}) and (\textbf{C3}). Such a
process is called a \emph{L\'evy-type process}. Also,
 a L\'evy process is denoted by  $\process{L}$.
If  $\nu(x,dy)=0$ for all $x\in\R^{d}$, according to \cite[Theorem 2.44]{Bottcher-Schilling-Wang-2013}, $\process{F}$ becomes an \emph{elliptic diffusion process}. Note that this definition agrees
with the standard definition of elliptic diffusion processes (see \cite{Rogers-Williams-Book-I-2000}).
 For more
on L\'evy-type processes  we refer the readers to the monograph
\cite{Bottcher-Schilling-Wang-2013}.

Now, let us recall  the definitions of transience and recurrence of Markov processes.
Let
$(\process{M},$ $\{\mathbb{P}^{x}\}_{x\in\R^{d}})$ be a  Markov process with  c\`adl\`ag sample paths on the state space $(\R^{d},\mathcal{B}(\R^{d}))$, where $d\geq1$. The
process $\process{M}$ is called
\begin{enumerate}
  \item [(i)] \emph{irreducible} if there exists a $\sigma$-finite measure $\varphi(dy)$ on
$\mathcal{B}(\R^{d})$ such that whenever $\varphi(B)>0$ we have
$\int_0^{\infty}\mathbb{P}^{x}(M_t\in B)dt>0$ for all $x\in\R^{d}$.
 \item [(ii)] \emph{transient} if it is $\varphi$-irreducible
                       and if there exists a countable
                      covering of $\R^{d}$ with  sets
$\{B_j\}_{j\in\N}\subseteq\mathcal{B}(\R^{d})$, such that for each
$j\in\N$ there is a finite constant $c_j\geq0$ such that
$\int_0^{\infty}\mathbb{P}^{x}(M_t\in B_j)dt\leq c_j$ holds for all
$x\in\R^{d}$.
  \item [(iii)] \emph{recurrent} if it is
                      $\varphi$-irreducible and if $\varphi(B)>0$ implies $\int_{0}^{\infty}\mathbb{P}^{x}(M_t\in B)dt=\infty$ for all
                      $x\in\R^{d}$.
\end{enumerate}
It is well known that every irreducible Markov process is either transient or recurrent (see \cite[Theorem
2.3]{Tweedie-1994}). Also,
let us remark that if $\{M_t\}_{t\geq0}$ is a $\varphi$-irreducible
Markov process, then the irreducibility measure $\varphi(dy)$ can be
maximized, that is, there exists a unique ``maximal" irreducibility
measure $\psi(dy)$ such that for any measure $\bar{\varphi}(dy)$,
$\{M_t\}_{t\geq0}$ is $\bar{\varphi}$-irreducible if, and only if,
$\bar{\varphi}\ll\psi$ (see \cite[Theorem 2.1]{Tweedie-1994}).
According to this, from now on, when we refer to irreducibility
measure we actually refer to the maximal irreducibility measure.
In
the sequel, we consider  only the so-called \emph{open-set irreducible}
Markov processes, that is,
Markov processes whose maximal irreducibility measure is fully supported.
An example of such measure is the Lebesgue measure, which we denote by $\lambda(dy)$. Clearly,  a Markov process $\process{M}$ will be
$\lambda$-irreducible if $\mathbb{P}^{x}(M_t\in B)>0$ for all $t>0$ and all
 $x\in\R^{d}$ whenever $\lambda(B)>0.$ In particular, the
process $\process{M}$ will be $\lambda$-irreducible if the
transition kernel $\mathbb{P}^{x}(M_t\in dy)$, $t>0$, $x\in\R^{d}$,  possesses a strictly positive transition density
function.
Let us remark here that the $\lambda$-irreducibility of L\'evy-type processes is a very well-studied topic in the literature. We refer the readers to  \cite{Sheu-1991} and \cite{Stramer-Tweedie-1997} for the case of elliptic diffusion processes, to \cite{Kolokoltsov-2000}  for the case of a class of pure jump L\'evy-type processes (the so-called stable-like processes),
to   \cite{Bass-Cranston-1986},  \cite{Ishikawa-2001}, \cite{Knopova-Kulik-2014}, \cite{Kulik-2007}, \cite{Masuda-2007, Masuda-Erratum-2009}, \cite{Kwoon-Lee-1999} and \cite{Picard-1996, Picard-Erratum-2010} for the case of a class of L\'evy-type processes obtained as a solution of certain jump-type stochastic differential equations and
\cite{Knopova-Schilling-2012} and \cite{Knopova-Schilling-2013}   for the case of general L\'evy-type processes.

As the first main result of this paper we prove that  an open-set irreducible L\'evy-type process $\process{F}$ is transient if, and only if, there exist $x\in\R^{d}$ and an open bounded set $O\subseteq\R^{d}$, such that $L_O<\infty$ $\mathbb{P}^{x}$-a.s.  Accordingly, in the case of transience, for a Borel function $f:[0,\infty)\longrightarrow[0,\infty)$,  we say that $\process{F}$ is $f$-strongly transient if $\mathbb{E}^{x}[f(L_O)]<\infty$ for all $x\in\R^{d}$ and all open bounded sets $O\subseteq\R^{d}$. Otherwise, we say that $\process{F}$ is $f$-weakly transient. Next, in the case when the function
$f(t)$ is continuously differentiable and non-decreasing, we derive Chung-Fuchs type conditions  for the $f$-weak and $f$-strong transience of  $\process{F}$.  Specially,  if $\process{F}$ is a  L\'evy process,  these conditions reduce to the
Chung-Fuchs type conditions obtained in \cite{Sato-Watanabe-2004}.  This
shows that our conditions
are sharp for L\'evy processes.
In the special case when $f(t)=t^{\kappa}$ for some $\kappa>0$, we use the terminology
$\kappa$-weak and $\kappa$-strong transience introduced in \cite{Sato-Watanabe-2004}.
In this context,
we first discuss the $\kappa$-weak and $\kappa$-strong transience of $\process{F}$ with respect to the dimension of the state space and Pruitt indices and, in particular, the $\kappa$-weak and $\kappa$-strong transience of elliptic diffusion and stable-like processes, thus generalizing the results obtained in \cite{Li-Liu-2006} and \cite{Takeuchi-1967}. Finally, in the case when the symbol of $\process{F}$ is radial  in the co-variable we  provide a series of conditions for the $\kappa$-weak and $\kappa$-strong transience          in terms of the corresponding L\'evy measure.

 There is a vast  literature and great efforts have been made to study the recurrence (and certain related properties such as null and positive recurrence, ergodicity and strong, subexponential and exponential ergodicity) of general and certain special subclasses of Markov processes. On the other hand, the transience property of Markov processes remained largely unexplored.
The notion of  weak and strong transience, as a ``measure" of degree of transience, originated (and recognized to be crucial) from studying the asymptotic behavior as $t\longrightarrow\infty$ of the quantity $\int_{\R^{d}}\mathbb{P}^{x}(T_B\leq t)dx$ (the volume of a so-called L\'evy sausage), where $B\in\mathcal{B}(\R^{d})$ is bounded and $T_B:=\inf\{t\geq0: L_t\in B\}$ is the first hitting time of  $B$ by a transient L\'evy process $\process{L}$  (see \cite{Port-1990} and the references therein).
 Except for L\'evy processes, whose weak and strong transience have been studied extensively in \cite{Sato-Watanabe-2004} (see also \cite{Hawkes-1977} for the case of symmetric L\'evy processes and  \cite{Sato-Watanabe-2005} and \cite{Takeuchi-1967} for the case of (semi-)stable L\'evy processes),  weak and strong transience of only few more special classes of Markov
processes have  been considered
 in the literature. More precisely, in \cite{Li-Liu-2006} the authors have discussed   weak and strong transience
of elliptic diffusion processes. In \cite{Yamamuro-1998}   weak and strong transience of Ornstein-Uhlenbeck type process (which are a subclass of L\'evy-type processes with unbounded coefficients (see \cite{Sato-Yamazato-1984})) have been investigated and in \cite{Dawson-Gorostiza-Wakolbinger-2005} the authors have considered  weak and strong transience of a class of L\'evy processes on metric Abelian groups. Finally, in \cite{Wu-Zhang-Liu-2006}  strong transience
of operator-self-similar Markov processes has been studied.

This paper is organized as follows.
 First, in Section
\ref{s2}, for a Borel function $f:[0,\infty)\longrightarrow[0,\infty)$, we introduce the notion of   $f$-weak and $f$-strong  transience of L\'evy-type processes and   derive Chung-Fuchs type conditions for  these properties. In Section \ref{s3}, we restrict to  the $\kappa$-weak and $\kappa$-strong  transience (that is, the case when $f(t)=t^{\kappa}$ for some $\kappa>0$) and discuss these properties with respect to the dimension of the state space and Pruitt indices.
 Finally, in Section \ref{s4}, we consider   $\kappa$-weak and $\kappa$-strong  transience of L\'evy-type processes with radial symbol (in the co-variable) and  provide conditions for these properties in terms of the L\'evy measures.

\section{Weak and Strong Transience of L\'evy-Type Processes}\label{s2}
In this section, we  introduce the notion and derive Chung-Fuchs type conditions for weak and strong transience  of L\'evy-type processes. We start with the following characterization of the transience property of L\'evy-type processes.
\begin{theorem}\label{tm3.1} Let $\process{F}$ be a $d$-dimensional open-set irreducible  L\'evy-type process. Then,  the following properties are equivalent:
\begin{itemize}
  \item [(i)] $\process{F}$ is transient;
  \item [(ii)] there exists (for all)
$x\in\R^{d}$ such that
$$\mathbb{P}^{x}\left(\lim_{t\longrightarrow\infty}|F_t|=\infty\right)=1;$$
\item[(iii)] there exist (for all) $x\in\R^{d}$ and  an (all) open bounded set $O\subseteq\R^{d}$, such that $L_{O}<\infty$ $\mathbb{P}^{x}$-a.s.
\end{itemize}
\end{theorem}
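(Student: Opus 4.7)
My plan is to establish the equivalences cyclically as (ii)$\Rightarrow$(iii)$\Rightarrow$(i)$\Rightarrow$(ii); the cycle automatically identifies the ``there exists'' and ``for all'' versions of (ii) and (iii), since (i) is already a statement about every $x\in\R^{d}$.

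The implication (ii) $\Rightarrow$ (iii) is immediate: if $|F_t|\to\infty$ $\mathbb{P}^{x}$-a.s., then for $\mathbb{P}^{x}$-almost every $\omega$ there is a random $T(\omega)$ with $F_t(\omega)\notin O$ for $t\geq T(\omega)$, because $O$ is bounded; hence $L_O\leq T<\infty$ $\mathbb{P}^{x}$-a.s.

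For (iii)$\Rightarrow$(i), I would invoke Tweedie's dichotomy (\cite[Theorem 2.3]{Tweedie-1994}) for open-set irreducible Markov processes and try to rule out recurrence. Assuming recurrence, the maximal irreducibility measure $\psi$ is fully supported, so $\psi(O)>0$ for any nonempty open $O$ and
$$\int_0^\infty\mathbb{P}^{y}(F_t\in O)\,dt=\infty,\qquad y\in\R^{d}.$$
The task is to upgrade this mean statement to the pathwise $\mathbb{P}^{y}(L_O=\infty)=1$, which would contradict (iii). I would do this by iterating the strong Markov property at the entry times $\sigma_k:=\inf\{t\geq k: F_t\in O\}$, $k\in\N$, and exploiting the right-continuity of paths into the open set $O$: whenever $F_{\sigma_k}\in O$, the trajectory remains in $O$ for a strictly positive random time, so the total expected occupation time of $O$ is bounded below by $\sum_k\mathbb{P}^{y}(\sigma_k<\infty)\cdot c$ with a positive constant $c$ coming from a uniform lower bound on the expected post-entry sojourn in $O$ across compact subsets of $O$ (this uniform bound is where the Feller structure enters). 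Finiteness of $L_O$ would then force $\mathbb{P}^{y}(\sigma_k<\infty)\to0$, contradicting the divergence above.

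For (i) $\Rightarrow$ (ii), the covering $\{B_j\}$ in the definition of transience gives $\int_0^\infty\mathbb{P}^{y}(F_t\in B_j)\,dt\leq c_j$ uniformly in $y$; since every closed ball $\overline{B}(0,n)$ is compact, it is covered by finitely many $B_j$, so
$$\sup_{y\in\R^{d}}\mathbb{E}^{y}\Bigl[\int_0^\infty 1_{\overline{B}(0,n)}(F_t)\,dt\Bigr]<\infty.$$
The same strong-Markov iteration as before, run in the opposite direction, shows that if $\mathbb{P}^{x}(F_t\in\overline{B}(0,n)$ at arbitrarily large $t)>0$, each revisit would contribute a uniformly positive expected sojourn and the expected occupation time would blow up. Hence $|F_t|$ exceeds $n$ eventually $\mathbb{P}^{x}$-a.s.\ for every $n$, yielding (ii).

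The main obstacle in the two nontrivial directions is identical: converting between $\int_0^\infty\mathbb{P}^{y}(F_t\in O)\,dt$ and $\mathbb{P}^{y}(L_O<\infty)$ for open bounded $O$. The cleanest packaging is a lemma asserting that, under our standing hypotheses on $\process{F}$, these two finitenesses are equivalent for every open bounded $O$ and every $y\in\R^{d}$. The Feller/c\`adl\`ag structure enters precisely through a uniform lower bound for the expected post-entry sojourn in $O$, which combines right-continuity with the strong continuity of the semigroup on compact subsets of $O$; once this lemma is in place, the theorem follows from the cycle above.
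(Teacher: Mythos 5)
Your implications (ii)$\Rightarrow$(iii) and (i)$\Rightarrow$(ii) are sound: the strong-Markov/dominated-convergence argument, together with a uniform lower bound on the expected post-entry sojourn (which does follow from the Feller property, e.g.\ by comparing $\int_0^{\varepsilon}P_sf\,ds$ with $f$ for a continuous bump function $f$ squeezed between $1_{\overline{B}(0,n)}$ and $1_{B(0,n+1)}$), correctly converts $\int_0^{\infty}\mathbb{P}^{x}(F_t\in O')\,dt<\infty$ into $L_{\overline{B}(0,n)}<\infty$ a.s. The genuine gap is in (iii)$\Rightarrow$(i), i.e.\ in the half of your packaged lemma asserting that $\mathbb{P}^{y}(L_O<\infty)=1$ forces $\int_0^{\infty}\mathbb{P}^{y}(F_t\in O)\,dt<\infty$. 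First, your key inequality points the wrong way: the windows $[\sigma_k,\sigma_k+\varepsilon)$ can coincide for many consecutive $k$ (nothing prevents $\sigma_1=\sigma_2=\cdots=\sigma_m$ when the first entry after time $1$ occurs only after time $m$), so the expected occupation time is \emph{not} bounded below by $c\sum_k\mathbb{P}^{y}(\sigma_k<\infty)$; the valid comparison is the upper bound $\int_0^{\infty}\mathbb{P}^{y}(F_t\in O)\,dt\leq\sum_k\mathbb{P}^{y}(\sigma_k<\infty)$. Second, and more importantly, the contradiction you draw is a non sequitur: $L_O<\infty$ a.s.\ only yields $\mathbb{P}^{y}(\sigma_k<\infty)\longrightarrow0$, which is perfectly compatible with $\sum_k\mathbb{P}^{y}(\sigma_k<\infty)=\infty$ (think of $\mathbb{P}^{y}(\sigma_k<\infty)\sim1/k$). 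Almost-sure finiteness of $L_O$ is genuinely weaker than integrability of the occupation time --- the entire paper is about transient processes whose last exit times have infinite moments --- so no soft pointwise argument of this shape can close the implication.

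What is actually needed, and what the paper supplies, is the Harris-type theory for the exponentially sampled resolvent chain $\{\bar{F}_n\}_{n\geq0}$: by Tuominen--Tweedie the chain is recurrent if and only if the process is, and by Meyn--Tweedie (Theorem 9.2.2) a recurrent irreducible chain splits into a maximal Harris set $H$, on which $L_O=\infty$ a.s., and a $\psi$-null exceptional set $N$. The open-set irreducibility combined with the continuous-component ($T$-chain) property of Feller processes with bounded coefficients (Schilling's results, which reappear in the proof of \eqref{eq4} in Lemma \ref{lm3.3}) is what eliminates $N$ and upgrades the conclusion to every starting point and hence contradicts (iii). Your proposal neither invokes nor substitutes for this step, so as written the direction (iii)$\Rightarrow$(i) --- and with it your cycle --- does not close.
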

\begin{proof}\begin{description}
               \item[(i)$\Leftrightarrow$(ii)]  Let $\chain{\bar{F}}$ be a $d$-dimensional Markov chain generated by a transition function of the form $$\bar{p}(x,dy):=\int_0^{\infty}e^{-t}\mathbb{P}^{x}(F_t\in dy)dt.$$
Clearly, $\chain{\bar{F}}$ is also open-set irreducible, and hence, due to \cite[Theorem 2.3]{Tweedie-1994}, it is either transient or recurrent. Now, according to
\cite[Theorem 2.1]{Tuominen-Tweedie-1979} (which states that $\chain{\bar{F}}$ is transient if, and only if, $\process{F}$ is transient) and \cite[Corollary 3.4 and Theorem 5.2]{Schilling-Positivity-1998}, \cite[Theorem 7.1]{Tweedie-1994} and \cite[Proposition 3.2]{Meyn-Tweedie-AdvAP-II-1993} (which state that the property in (ii) holds for $\chain{\bar{F}}$ if, and only if, it holds for $\process{F}$), the assertion will follow if we prove that $\chain{\bar{F}}$ is transient if, and only if, there exists (for all)
$x\in\R^{d}$ such that
$$\bar{\mathbb{P}}^{x}\left(\lim_{n\longrightarrow\infty}|\bar{F}_n|=\infty\right)=1,$$ where $\{\bar{\mathbb{P}}^{x}\}_{x\in\R^{d}}$ is induced by $\bar{p}(x,dy)$, $x\in\R^{d}$.
The necessity is a direct consequence of  \cite[Theorem  9.2.2 (i)]{Meyn-Tweedie-Book-2009} (together with \cite[Corollary 3.4 and Theorem 5.2]{Schilling-Positivity-1998} and \cite[Proposition 6.1.1 and Theorem 6.2.9]{Meyn-Tweedie-Book-2009}). On the other hand, the sufficiency is  a consequence of (a straightforward generalization of) \cite[Proposition 5.3]{Sandric-Bernoulli-2013} and \cite[Theorem 9.2.2 (ii)]{Meyn-Tweedie-Book-2009}.
               \item[(i)$\Leftrightarrow$(iii)] First, recall that, due to the c\`adl\`ag property of the sample paths of $\process{F}$, $L_O$  is a well-defined random variable.
               Now, the necessity follows directly from (ii). In order to conclude the sufficiency, observe that $\chain{\bar{F}}$ (the chain defined in the first part of the proof) can be represented as follows. Let $\{J_n\}_{n\geq1}$ be a sequence of i.i.d. random variables with exponential distribution (with parameter one) defined on a probability space $(\bar{\Omega},\bar{\mathcal{F}},\bar{\mathbb{P}})$. Further, set $S_0:=0$ and $S_n:=J_1+\cdots+J_n$, $n\geq1$. Then, $\{F_{S_n}\}_{n\geq0}$ is a Markov chain defined on $(\Omega\times\bar{\Omega},\mathcal{F}\times\bar{\mathcal{F}},\{\mathbb{P}^{x}\times\bar{\mathbb{P}}\}_{x\in\R^{d}})$ having the same marginal distributions  as $\chain{\bar{F}}$.
               Clearly, according to the strong law of large numbers, $\lim_{n\longrightarrow\infty}S_n=\infty$ $\mathbb{P}$-a.s. Now, the assertion follows as a consequence of the transience and recurrence dichotomy of irreducible Markov chains and  (a multidimensional version of) \cite[Proposition 2.4]{Sandric-JOTP-2014}.
             \end{description}

\end{proof}
Accordingly,  in the sequel we always assume
 \begin{itemize}
  \item [(\textbf{C4})] $\process{F}$ is a  $d$-dimensional, open-set irreducible and transient L\'evy-type process.
\end{itemize}
 Now, we introduce the notion of weak and strong transience of L\'evy-type processes.
Let $f:[0,\infty)\longrightarrow[0,\infty)$ be an arbitrary Borel measurable function.
We say that $\process{F}$ is \emph{$f$-strongly transient} if $\mathbb{E}^{x}[f(L_{O})]<\infty$ for all $x\in\R^{d}$ and all open bounded sets $O\subseteq\R^{d}$. Otherwise, we say that $\process{F}$ is  \emph{$f$-weakly transient}.
In the following lemma we slightly generalize the result from \cite[Lemma 2.2]{Sato-Watanabe-2004}.

\begin{lemma}\label{lm3.3} Let $\process{F}$ be a L\'evy-type process satisfying condition (\textbf{C4}) and let   $f:[0,\infty)\longrightarrow[0,\infty)$ be a non-decreasing and continuously differentiable function. Then, for any $x\in\R^{d}$ and any open bounded sets $O_1,O_2\subseteq\R^{d}$, $\bar{O}_1\subseteq O_2$, there is a constant $c\geq 1$ such that  $$c^{-1}\int_0^{\infty}f(t)\mathbb{P}^{x}(F_t\in \bar{O}_1)dt\leq\mathbb{E}^{x}[f(L_{\bar{O}_1})]\leq c\int_0^{\infty}f(t)\mathbb{P}^{x}(F_t\in \bar{O}_2)dt.$$ Here, $\bar{B}$ denotes the closure of the set $B\subseteq\R^{d}$.\end{lemma}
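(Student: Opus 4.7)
The overall strategy will be to compare both of the integrals $\int_0^\infty f(t)\mathbb{P}^x(F_t\in\bar{O}_i)\,dt$ with $\mathbb{E}^x[f(L_{\bar{O}_1})]$ through the tail $\mathbb{P}^x(L_{\bar{O}_1}\geq t)$, relying throughout on the layer-cake identity
\[
\mathbb{E}^x[f(L_{\bar{O}_1})]=f(0)+\int_0^\infty f'(u)\,\mathbb{P}^x(L_{\bar{O}_1}>u)\,du,
\]
valid because $f$ is $C^1$ and non-decreasing. The main work then reduces to showing that $\mathbb{P}^x(L_{\bar{O}_1}\geq t)$ is comparable, up to a multiplicative constant depending only on $x$, $\bar{O}_1$ and $\bar{O}_2$, to $\mathbb{P}^x(F_t\in\bar{O}_1)$ (for the first inequality) and to $\int_t^\infty \mathbb{P}^x(F_u\in\bar{O}_2)\,du$ (for the second inequality), in the spirit of the argument for L\'evy processes in \cite[Lemma 2.2]{Sato-Watanabe-2004}.

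For the lower inequality, I would first write $f(t)=f(0)+\int_0^t f'(u)\,du$ and swap the order of integration to obtain
\[
\int_0^\infty f(t)\,\mathbb{P}^x(F_t\in\bar{O}_1)\,dt = f(0)\,G(0)+\int_0^\infty f'(u)\,G(u)\,du,\qquad G(u):=\int_u^\infty \mathbb{P}^x(F_t\in\bar{O}_1)\,dt.
\]
The Markov property at time $u$ gives $G(u)=\mathbb{E}^x[\mathbb{E}^{F_u}[T_{\bar{O}_1}]]$, where $T_{\bar{O}_1}:=\int_0^\infty \mathbf{1}_{\bar{O}_1}(F_s)\,ds$. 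Next I would apply strong Markov at the first hitting time $\tau^{\ast}:=\inf\{s\geq 0:F_s\in\bar{O}_1\}$, using the c\`adl\`ag property to ensure $F_{\tau^{\ast}}\in\bar{O}_1$ on $\{\tau^{\ast}<\infty\}$, in order to conclude $\mathbb{E}^y[T_{\bar{O}_1}]\leq M\,\mathbb{P}^y(\tau^{\ast}<\infty)$ for every $y\in\R^d$, where $M:=\sup_{z\in\bar{O}_1}\mathbb{E}^z[T_{\bar{O}_1}]$. Since $\mathbb{E}^x[\mathbb{P}^{F_u}(\tau^{\ast}<\infty)]=\mathbb{P}^x(L_{\bar{O}_1}\geq u)$ by a further Markov step, this gives $G(u)\leq M\,\mathbb{P}^x(L_{\bar{O}_1}\geq u)$; substituting back and comparing with the layer-cake formula yields the lower inequality with constant $M$.

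For the upper inequality I would exploit the strict inclusion $\bar{O}_1\subseteq O_2$ (with $O_2$ open). For every $y\in\bar{O}_1$ the trajectory starts inside the open set $O_2$, so by right-continuity $\mathbb{E}^y[\int_0^\delta \mathbf{1}_{O_2}(F_u)\,du]>0$ for any fixed $\delta>0$; the Feller property together with the Portmanteau theorem renders this quantity lower semicontinuous in $y$, so that it attains a strictly positive infimum $\gamma$ on the compact set $\bar{O}_1$. Setting $\sigma_t:=\inf\{s\geq t:F_s\in\bar{O}_1\}$, strong Markov at $\sigma_t$ together with $\sigma_t\geq t$ then produces
\[
\gamma\,\mathbb{P}^x(L_{\bar{O}_1}\geq t)\leq \int_t^\infty \mathbb{P}^x(F_u\in\bar{O}_2)\,du.
\]
Inserting this into the layer-cake formula and applying Fubini gives $\mathbb{E}^x[f(L_{\bar{O}_1})]\leq f(0)+\gamma^{-1}\int_0^\infty f(u)\,\mathbb{P}^x(F_u\in\bar{O}_2)\,du$; the leftover $f(0)$ is absorbed into the integral via $\int_0^\infty f(u)\mathbb{P}^x(F_u\in\bar{O}_2)\,du\geq f(0)\,\mathbb{E}^x[T_{\bar{O}_2}]$, with $\mathbb{E}^x[T_{\bar{O}_2}]>0$ guaranteed by the open-set irreducibility.

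The two places I expect to demand the most care are the uniform finiteness $M<\infty$, which I would derive by covering the compact set $\bar{O}_1$ by finitely many of the sets $\{B_j\}$ appearing in the definition of transience in (\textbf{C4}), and the strict positivity of $\gamma$, where the strict inclusion $\bar{O}_1\subseteq O_2$ is indispensable: without it, right-continuity alone cannot force the process to spend a uniformly positive amount of time inside the target set immediately after hitting $\bar{O}_1$.
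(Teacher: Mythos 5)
Your overall architecture is essentially the one the paper follows: the paper reduces to the sandwich
$c^{-1}\int_t^{\infty}\mathbb{P}^{x}(F_s\in \bar{O}_1)ds\leq\cdots$ via the layer-cake identity and then delegates the sandwich itself to \cite[Lemma 2.2]{Yamamuro-1998}, whose single hypothesis is the uniform estimate \eqref{eq4}. Your layer-cake reduction, the strong Markov steps at the hitting times $\tau^{\ast}$ and $\sigma_t$ (where closedness of $\bar{O}_1$ plus right-continuity correctly give $F_{\tau^{\ast}}\in\bar{O}_1$ on $\{\tau^{\ast}<\infty\}$), and the lower-semicontinuity argument producing $\gamma>0$ are all sound. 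The genuine gap is the uniform finiteness $M=\sup_{z\in\bar{O}_1}\mathbb{E}^{z}\left[\int_0^{\infty}1_{\bar{O}_1}(F_s)ds\right]<\infty$. You propose to obtain it by covering the compact set $\bar{O}_1$ with finitely many of the sets $B_j$ from the definition of transience; but those $B_j$ are only Borel sets, so compactness of $\bar{O}_1$ yields no finite subcover, and summing $c_j$ over all $j$ with $B_j\cap\bar{O}_1\neq\emptyset$ may diverge. The bare definition of transience therefore does not give that an arbitrary compact set is uniformly transient, and this is exactly the point where the lemma requires real work.

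The paper supplies precisely this missing ingredient: the bulk of its proof establishes $\inf_{x\in K}\mathbb{E}^{x}[e^{-L_K}]>0$ for every compact $K$, using the Markov property, the kernel $T(x,dy)$ with the lower-semicontinuity property furnished by \cite[Theorem 7.1]{Tweedie-1994}, and a compactness contradiction argument. From that bound one extracts $T>0$ and $\epsilon>0$ with $\mathbb{P}^{z}(L_{\bar{O}_1}\leq T)\geq\epsilon$ for all $z\in\bar{O}_1$, and iterating the Markov property along successive returns to $\bar{O}_1$ separated by time $T$ gives geometric decay of $\mathbb{P}^{y}(L_{\bar{O}_1}>nT)$ and hence $M\leq T/\epsilon<\infty$. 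Without an estimate of this type (equivalently, without knowing that compact sets are petite and that petite sets of a transient $T$-process are uniformly transient), your lower inequality is unproven. Two minor points: you use the symbol $T_{\bar{O}_1}$ both for the occupation time and, implicitly, for the hitting time, which should be disentangled; and the discrepancy between $\mathbb{P}^{x}(L_{\bar{O}_1}\geq u)$ and $\mathbb{P}^{x}(L_{\bar{O}_1}>u)$ is harmless since they differ only on a Lebesgue-null set of $u$.
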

\begin{proof}
Clearly, without loss of generality, we can assume that $f(0)=0$. Now, for any $x\in\R^{d}$ and any open set $O\subseteq\R^{d}$,
\begin{align*}\mathbb{E}^{x}[f(L_{\bar{O}})]&=\int_{[0,\infty)}f(t)d\mathbb{P}^{x}(L_{\bar{O}}\leq t)\\
&=\int_{[0,\infty)}\int_{[0,t)}f'(s)dsd\mathbb{P}^{x}(L_{\bar{O}}\leq t)\\&=\int_0^{\infty}f'(t)\mathbb{P}^{x}(L_{\bar{O}}> t)dt\\&=
\int_0^{\infty}f'(t)\mathbb{P}^{x}(T_{\bar{O}}\circ\theta_t<\infty)dt\\&=
\int_0^{\infty}f'(t)\,\mathbb{E}^{x}[\mathbb{P}^{F_t}(T_{\bar{O}}<\infty)]dt,\end{align*}
where  in the fourth step we used the fact $\{L_{\bar{O}}>t\}=\{T_{\bar{O}}\circ\theta_t<\infty\}$. Recall that $T_B$ denotes the first hitting time of the set $B\subseteq\R^{d}$ by $\process{F}$.
Further,  \cite[Lemma 2.2]{Yamamuro-1998} states that if \begin{equation}\label{eq4}\inf_{x\in K}\mathbb{E}^{x}\left[e^{- L_K}\right]>0\quad\textrm{ for every compact set}\ K\subseteq\R^{d},\end{equation} then for any $x\in\R^{d}$ and any open bounded sets $O_1,O_2\subseteq\R^{d}$, $\bar{O}_1\subseteq O_2$,   there is a constant $c\geq 1$ such that
$$c^{-1}\int_0^{\infty}\int_0^{\infty}\mathbb{E}^{x}[\mathbb{P}^{F_t}(F_s\in \bar{O}_1)]ds f'(t)dt\leq\mathbb{E}^{x}[f(L_{\bar{O}_1})]\leq c\int_0^{\infty}\int_0^{\infty}\mathbb{E}^{x}[\mathbb{P}^{F_t}(F_s\in \bar{O}_2)]ds f'(t)dt.$$
On the other hand, for any $x\in\R^{d}$ and any open set $O\subseteq\R^{d}$,
$$\int_0^{\infty}\int_0^{\infty}\mathbb{E}^{x}[\mathbb{P}^{F_t}(F_s\in \bar{O})]dsf'(t)dt=\int_0^{\infty}\mathbb{E}^{x}\left[\int_t^{\infty}1_{\bar{O}}(F_s)ds\right]f'(t)dt=\int_0^{\infty}f(t)\mathbb{P}^{x}(F_t\in \bar{O})dt,$$ which proves the claim.  Finally, let us prove the relation in \eqref{eq4}. Let $K\subseteq\R^{d}$ be an arbitrary compact set. First, observe that,  by the Markov property,
$$\mathbb{E}^{x}[e^{-L_K}]\geq e^{-t}\int_{\R^{d}}\mathbb{E}^{y}[e^{-L_K}]\mathbb{P}^{x}(F_t\in dy),\quad x\in\R^{d},\ t\geq0.$$ Further, according to \cite[Corollary 3.4 and Theorem 5.2]{Schilling-Positivity-1998} and \cite[Theorem 7.1]{Tweedie-1994}, there exist a probability measure $a(dt)$ on $\mathcal{B}([0,\infty))$ and a Borel kernel $T:\R^{d}\times\mathcal{B}(\R^{d})\longrightarrow[0,1]$, such that
\begin{itemize}
\item [(i)] $T(x,\R^{d})>0$ for all $x\in\R^{d};$
  \item [(ii)] $\displaystyle\int_{[0,\infty)}\mathbb{P}^{x}(F_t\in B)a(dt)\geq T(x,B)$ for all $x\in\R^{d}$ and all $B\in\mathcal{B}(\R^{d});$
  \item [(iii)]  $\displaystyle\liminf_{y\longrightarrow x}\int_{\R^{d}}f(z)T(y,dz)\geq\int_{\R^{d}}f(z)T(x,dz)$  for all $x\in\R^{d}$ and all $f\in B_b(\R^{d}),$ $f\geq0.$
\end{itemize} Now, fix $t_0>0$. Then, from the previous relation we have
\begin{align*}&\mathbb{E}^{x}[e^{-L_K}]\int_{[0,t_0]}a(dt)\\&\geq e^{-t_0}\int_{[0,t_0]}\int_{\R^{d}}\mathbb{E}^{y}[e^{-L_K}]\mathbb{P}^{x}(F_t\in dy)a(dt)\\
&=e^{-t_0}\int_{[0,\infty)}\int_{\R^{d}}\mathbb{E}^{y}[e^{-L_K}]\mathbb{P}^{x}(F_t\in dy)a(dt)-e^{-t_0}\int_{(t_0,\infty)}\int_{\R^{d}}\mathbb{E}^{y}[e^{-L_K}]\mathbb{P}^{x}(F_t\in dy)a(dt)\\
&\geq e^{-t_0}\int_{\R^{d}}\mathbb{E}^{y}[e^{-L_K}]T(x,dy)-e^{-t_0}\int_{(t_0,\infty)}a(dt).\end{align*}
Hence, if \eqref{eq4} did not hold for $K$, then we would have
$$\int_{(t_0,\infty)}a(dt)\geq\inf_{x\in K}\int_{\R^{d}}\mathbb{E}^{y}[e^{-L_K}]T(x,dy),$$ that is, by letting $t_0\longrightarrow\infty$,
$$ \inf_{x\in K}\int_{\R^{d}}\mathbb{E}^{y}[e^{-L_K}]T(x,dy)=0.$$ In particular, due to the compactness of $K$, there would exist a sequence $\{x_n\}_{n\geq1}\subseteq K$, $\lim_{n\longrightarrow\infty}x_n=x_0\in K$, such that
$$0=\lim_{n\longrightarrow\infty}\int_{\R^{d}}\mathbb{E}^{y}[e^{-L_K}]T(x_n,dy)\geq\liminf_{x\longrightarrow x_0}\int_{\R^{d}}\mathbb{E}^{y}[e^{-L_K}]T(x,dy)\geq\int_{\R^{d}}\mathbb{E}^{y}[e^{-L_K}]T(x_0,dy).$$ However,
this cannot be the case since $T(x_0,\R^{d})>0$ and, due to the transience of $\process{F}$, $\mathbb{E}^{x}[e^{-L_K}]>0$ for all $x\in\R^{d}.$
\end{proof}
Let us remark that if the semigroup $\process{P}$ of $\process{F}$ satisfies the \emph{strong Feller property}, that is, $P_t(B_b(\R^{d}))\subseteq
C(\R^{d})$ for all $t > 0$, then Theorem \ref{tm3.1} and the relation in \eqref{eq4} follow directly from (a multidimensional version of)  \cite[Proposition 2.4]{Sandric-JOTP-2014} and \cite[Remark 2.3]{Yamamuro-1998}, respectively. For sufficient conditions for a Feller semigroup to be a strong Feller semigroup see \cite{Schilling-Wang-2012} and \cite{Schilling-Wang-2013}.
In the following two theorems   we derive Chung-Fuchs type conditions for the  $f$-weak and $f$-strong transience.
\begin{theorem}\label{tm3.4}Let  $f:[0,\infty)\longrightarrow[0,\infty)$ be a  non-decreasing and continuously differentiable function and let  $\process{F}$ be a L\'evy-type process  with  infinitesimal generator $(\mathcal{A},\mathcal{D}_{\mathcal{A}})$ and symbol
  $q(x,\xi)$,  satisfying condition (\textbf{C4}). Assume
  \begin{equation}\label{eq3}\liminf_{\alpha\longrightarrow0}\int_{\R^{d}}\left(\int_{\frac{\ln 2}{4\sup_{x\in\R^{d}}|q(x,\xi)|}}^{\infty}e^{-\alpha t}f(t){\rm Re}\,\mathbb{E}^{0}[\it{e}^{\it{i}\langle\xi, F_t\rangle}]dt\right)\frac{\sin^{2}\left(\frac{a\xi_1}{2}\right)}{
\xi_1^{2}}\cdots\frac{\sin^{2}\left(\frac{a\xi_d}{2}\right)}{
\xi_d^{2}}d\xi>-\infty\end{equation} for all $a>0$ small enough.
Then,  $\process{F}$ is $f$-weakly transient if
 \begin{equation}\label{eq3.1}\int_{B(0,r)}\int_{0}^{\frac{\ln 2}{4\sup_{x\in\R^{d}}|q(x,\xi)|}}f(t)dt\,d\xi=\infty\quad\textrm{for some}\ r>0.\end{equation}

\end{theorem}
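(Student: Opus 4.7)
My plan is to combine Lemma \ref{lm3.3} with a Fourier-analytic lower bound for $\mathbb{P}^{0}(F_t\in[-a,a]^{d})$ driven by the real part of the characteristic function of $F_t$ started at $0$. By Lemma \ref{lm3.3}, to establish $f$-weak transience it suffices to exhibit an $a>0$ with
$$\int_{0}^{\infty}f(t)\mathbb{P}^{0}(F_t\in[-a,a]^{d})dt=\infty,$$
so the whole argument will reduce to a quantitative lower bound on this quantity. The Fourier-analytic workhorse is the Fej\'er identity
$$\int_{\R}\frac{\sin^{2}(a\xi/2)}{\xi^{2}}e^{i\xi y}d\xi=\frac{\pi}{2}(a-|y|)_{+},$$
which, tensorized over the $d$ coordinates and integrated against the law of $F_{t}$, gives
\begin{equation}\label{planFourier}
\int_{\R^{d}}{\rm Re}\,\mathbb{E}^{0}\left[e^{i\langle\xi,F_t\rangle}\right]\prod_{j=1}^{d}\frac{\sin^{2}(a\xi_j/2)}{\xi_j^{2}}d\xi\leq\left(\frac{\pi a}{2}\right)^{d}\mathbb{P}^{0}\left(F_t\in[-a,a]^{d}\right).
\end{equation}

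Next, I would multiply \eqref{planFourier} by $e^{-\alpha t}f(t)$, integrate over $t\in[0,\infty)$, and swap the order of integration on the left using Fubini. Splitting the inner $t$-integral at the $\xi$-dependent threshold $T_{*}(\xi):=\ln 2/(4\sup_{x\in\R^{d}}|q(x,\xi)|)$ produces two pieces. The tail piece $t\in[T_{*}(\xi),\infty)$ is precisely the left-hand side of \eqref{eq3}, and hence remains bounded below as $\alpha\to 0$ by hypothesis. For the initial piece $t\in[0,T_{*}(\xi)]$, I would invoke the standard estimate for Feller processes with pseudo-differential generator satisfying (\textbf{C2}),
$$\left|\mathbb{E}^{0}\left[e^{i\langle\xi,F_t\rangle}\right]-1\right|\leq c\,t\sup_{x\in\R^{d}}|q(x,\xi)|,$$
obtained by applying Dynkin's formula to a suitable regularization of $e^{i\langle\xi,\cdot\rangle}$ (cf.\ \cite[Lemma 2.1]{Schilling-PTRF-1998}); tracking the constants, this forces ${\rm Re}\,\mathbb{E}^{0}[e^{i\langle\xi,F_t\rangle}]\geq 1/2$ throughout $[0,T_{*}(\xi)]$.

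Combining the two lower bounds and applying monotone convergence on the initial piece (the factor $e^{-\alpha t}\nearrow 1$), one obtains a finite constant $M$ (from \eqref{eq3}) such that
$$\liminf_{\alpha\to 0}\int_{\R^{d}}\left(\int_{0}^{\infty}e^{-\alpha t}f(t){\rm Re}\,\mathbb{E}^{0}\left[e^{i\langle\xi,F_t\rangle}\right]dt\right)\prod_{j=1}^{d}\frac{\sin^{2}(a\xi_j/2)}{\xi_j^{2}}d\xi\geq\frac{1}{2}\int_{\R^{d}}\left(\int_{0}^{T_{*}(\xi)}f(t)dt\right)\prod_{j=1}^{d}\frac{\sin^{2}(a\xi_j/2)}{\xi_j^{2}}d\xi-M.$$
Choosing $a>0$ small enough that $ar<\pi$ (with $r$ as in \eqref{eq3.1}), on $B(0,r)$ one has $\prod_{j=1}^{d}\sin^{2}(a\xi_j/2)/\xi_j^{2}\geq(a/\pi)^{2d}$ because $\sin^{2}(x)/x^{2}\geq 4/\pi^{2}$ on $[-\pi/2,\pi/2]$, so hypothesis \eqref{eq3.1} makes the right-hand side infinite. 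Then \eqref{planFourier}, integrated, and monotone convergence in $\alpha$ on its right-hand side, force $\int_{0}^{\infty}f(t)\mathbb{P}^{0}(F_t\in[-a,a]^{d})dt=\infty$; $f$-weak transience now follows from Lemma \ref{lm3.3} applied with $x=0$ and $O_{1}=(-a,a)^{d}$.

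The principal technical obstacle I foresee is justifying the short-time bound ${\rm Re}\,\mathbb{E}^{0}[e^{i\langle\xi,F_t\rangle}]\geq 1/2$ with the correct constant $\ln 2/4$ multiplying $\sup_{x}|q(x,\xi)|$: for a L\'evy process this is immediate from $\mathbb{E}[e^{i\langle\xi,L_t\rangle}]=e^{-tq(\xi)}$, but in the L\'evy-type setting the characteristic function has no closed form, and one must combine a careful regularization of $e^{i\langle\xi,\cdot\rangle}$ with Dynkin's formula and the linear growth ensured by (\textbf{C2}). The only other delicate point is the interchange of $\liminf_{\alpha\to 0}$ with the $\xi$-integration on the tail piece, which is precisely what the $\liminf>-\infty$ formulation of \eqref{eq3} is designed to handle.
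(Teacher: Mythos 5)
Your proposal is correct and follows essentially the same route as the paper: reduce to $\int_0^\infty f(t)\mathbb{P}^0(F_t\in(-a,a)^d)\,dt=\infty$ via Lemma \ref{lm3.3}, test the law of $F_t$ against the Fej\'er kernel (the paper writes this as the triangle function $h=g(y_1)\cdots g(y_d)$ and its Fourier transform, which is the same computation), split the time integral at $t_0(\xi)=\ln 2/(4\sup_x|q(x,\xi)|)$, absorb the tail via hypothesis \eqref{eq3}, and lower-bound the initial piece by $\tfrac12\int_0^{t_0(\xi)}f(t)\,dt$. The short-time bound you flag as the main obstacle is exactly what the paper imports from the proof of \cite[Lemma 2.2]{Sandric-TAMS-2014}, namely ${\rm Re}\,\Phi_t(0,\xi)\geq\exp\left[-4t\sup_{x\in\R^{d}}|q(x,\xi)|\right]\geq\tfrac12$ on $[0,t_0(\xi)]$, so your argument closes in the same way.
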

\begin{proof} We follow the proof of \cite[Theorem 1.2]{Sandric-TAMS-2014} and prove that, under the above assumptions,
$\mathbb{E}^{0}[L_{O_0}]=\infty$
for every open neighborhood $O_0\subseteq\R^{d}$ of the origin.
According to Lemma \ref{lm3.3}, it suffices to prove that $$\int_0^{\infty}f(t)\mathbb{P}^{0}(F_t\in O_0)dt=\infty$$ for every open neighborhood $O_0\subseteq\R^{d}$ of the origin.
Let $a>0$ be arbitrary. By the monotone convergence theorem, we have
\begin{align*}\int_0^{\infty}f(t)\mathbb{P}^{0}(F_t\in (-a,a)^{d})dt&=\lim_{\alpha\longrightarrow0}\mathbb{E}^{0}\left[\int_0^{\infty}e^{-\alpha
t}f(t)1_{\left\{F_t\in
\left(-a,a\right)^{d}\right\}}dt\right]\\&=\lim_{\alpha\longrightarrow0}\int_0^{\infty}\int_{\R^{d}}e^{-\alpha
t}f(t)1_{ \left(-a,a\right)^{d}}(y)\mathbb{P}^{0}(F_t\in
dy)dt,\end{align*} where
$\left(-a,a\right)^{d}:=\left(-a,a\right)\times\ldots\times\left(-a,a\right)$.
Next, let
$$g(u):=\left(1-\frac{|u|}{a}\right)1_{\left(-a,a\right)}(u),\quad u\in\R,$$ and
$$h(y):=g(y_1)\cdots g(y_d),\quad y=(y_1,\ldots,y_d)\in\R^{d}.$$
Clearly,   $$ 1_{ \left(-a,a\right)^{d}}(y)\geq h(y),\quad y\in\R^{d}.$$ According to this,
$$\int_0^{\infty}f(t)\mathbb{P}^{0}(F_t\in (-a,a)^{d})dt\geq\liminf_{\alpha\longrightarrow0}\int_0^{\infty}\int_{\R^{d}}e^{-\alpha
t}f(t)h(y)\mathbb{P}^{0}(F_t\in dy)dt.$$ Further, observe that
$$g(u)=\frac{1}{\sqrt{a}}1_{\left(-\frac{a}{2},\frac{a}{2}\right)}\ast\frac{1}{\sqrt{a}}1_{\left(-\frac{a}{2},\frac{a}{2}\right)}(u),\quad u\in\R,$$
where $\ast$ denotes the standard convolution operator. Hence, since
$$\mathcal{F}\left(\frac{1}{\sqrt{a}}1_{\left(-\frac{a}{2},\frac{a}{2}\right)}\right)(\xi)=\frac{\sin\left(\frac{a\xi}{2}\right)}{\sqrt{a}\pi
\xi},$$ we have
$$\mathcal{F}(h)(\xi)=\frac{\sin^{2}\left(\frac{a\xi_1}{2}\right)}{a\pi^{2}
\xi_1^{2}}\cdots\frac{\sin^{2}\left(\frac{a\xi_d}{2}\right)}{a\pi^{2}
\xi_d^{2}}.$$ Denote by  $\Phi_t(x,\xi):=\mathbb{E}^{x}[\it{e}^{\it{i}\langle\xi, F_t-x\rangle}]$ for $t\geq0$ and $x,\xi\in\R^{d}.$ The above facts yield
\begin{align*}\int_0^{\infty}f(t)\mathbb{P}^{0}(F_t\in (-a,a)^{d})dt&\geq\liminf_{\alpha
\longrightarrow0}\int_0^{\infty}\int_{\R^{d}}\int_{\R^{d}}e^{-\alpha
t}f(t)e^{i\langle\xi,y\rangle}\mathcal{F}(h)(\xi)d\xi\,
\mathbb{P}^{0}(F_t\in
dy)dt\nonumber\\&=\liminf_{\alpha\longrightarrow0}\int_0^{\infty}\int_{\R^{d}}e^{-\alpha
t}f(t)\Phi_t(0,\xi)\mathcal{F}(h)(\xi)d\xi\,
dt\nonumber\\&=\liminf_{\alpha\longrightarrow0}\int_0^{\infty}\int_{\R^{d}}e^{-\alpha
t}f(t){\rm Re}\,\Phi_{t}(\rm{0},\xi)\mathcal{F}(\it{h})(\xi)d\xi\,
d\it{t}.\end{align*}
Now,  (the proof of) \cite[Lemma 2.2]{Sandric-TAMS-2014}  states that
$${\rm Re}\,\Phi_{t}(0,\xi)\geq\exp\left[-4t\sup_{x\in\R^{d}}|q(x,\xi)|\right],\quad t\in\left[0,\frac{\ln 2}{4\sup_{x\in\R^{d}}|q(x,\xi)|}\right].$$
Hence,
\begin{align*}&\int_0^{\infty}f(t)\mathbb{P}^{0}(F_t\in (-a,a)^{d})dt\\&\geq\liminf_{\alpha\longrightarrow0}\int_{\R^{d}}\Bigg(\int_0^{t_0(\xi)}e^{-\alpha
t}f(t){\rm Re}\,\Phi_{t}(0,\xi)dt+\int_{t_{0}(\xi)}^{\infty}e^{-\alpha t}f(t){\rm Re}\,\Phi_{t}(0,\xi)dt\Bigg)\mathcal{F}(h)(\xi)d\xi
\\&\geq
\liminf_{\alpha\longrightarrow0}\int_{\R^{d}}\left(\int_{0}^{t_0(\xi)}f(t)dt\right)\exp\left[-t_0(\xi)(\alpha+4\sup_{x\in\R^{d}}|q(x,\xi)|)\right]\mathcal{F}(h)(\xi)d\xi
\\&\ \ \ +\liminf_{\alpha\longrightarrow0}\int_{\R^{d}}\int_{t_{0}(\xi)}^{\infty}e^{-\alpha t}f(t){\rm Re}\,\Phi_{t}(0,\xi)dt\mathcal{F}(h)(\xi)d\xi,\end{align*}
where
$t_0(\xi):=\ln2/4\sup_{x\in\R^{d}}|q(x,\xi)|$. Consequently, by  the monotone convergence theorem, we
have
\begin{align*}&\int_0^{\infty}f(t)\mathbb{P}^{0}(F_t\in (-a,a)^{d})dt\\&\geq\frac{1}{2}\int_{\R^{d}}\left(\int_{0}^{t_0(\xi)}f(t)dt\right)\mathcal{F}(h)(\xi)d\xi+\liminf_{\alpha\longrightarrow0}\int_{\R^{d}}\int_{t_{0}(\xi)}^{\infty}e^{-\alpha t}f(t){\rm Re}\,\Phi_{t}(0,\xi)dt\mathcal{F}(h)(\xi)d\xi.\end{align*}
Finally, let $r>0$ be such that
$$\int_{B(0,r)}\int_{0}^{t_0(\xi)}f(t)dt\,d\xi=\infty.$$ Then, since
$$\lim_{a\longrightarrow0}\frac{(2\pi)^{2d}}{a^{d}}\mathcal{F}(h)(\xi)=1,$$ for any $c\in(0,1)$, all $\xi\in B(0,r)$ and all $a>0$ small enough
we have \begin{align*}&\int_0^{\infty}f(t)\mathbb{P}^{0}(F_t\in (-a,a)^{d})dt\\&\geq\frac{ca^{d}}{2(2\pi)^{2d}}\int_{B(0,r)}\int_{0}^{t_0(\xi)}f(t)dt\,d\xi+\liminf_{\alpha\longrightarrow0}\int_{\R^{d}}\int_{t_{0}(\xi)}^{\infty}e^{-\alpha t}f(t){\rm Re}\,\Phi_{t}(0,\xi)dt\mathcal{F}(h)(\xi)d\xi,\end{align*} which together with \eqref{eq3} concludes the proof.
\end{proof}

\begin{proposition}\label{p3.2}Let $\process{F}$ be a L\'evy-type  process  with infinitesimal generator $(\mathcal{A},\mathcal{D}_{\mathcal{A}})$ and symbol
  $q(x,\xi)$. Assume that
 \begin{itemize}
   \item [(i)] $C_c^{\infty}(\R^{d})$
is an operator core for
$(\mathcal{A},\mathcal{D}_{\mathcal{A}})$, that is,
$\mathcal{A}$ is the unique extension of $\mathcal{A}|_{C_c^{\infty}(\R^{d})}$
 on $\mathcal{D}_{\mathcal{A}}$;
   \item [(ii)]  $q(x,\xi)=q(-x,-\xi)$ for all $x,\xi\in\R^{d}$.
 \end{itemize}
Then, $\Phi_{t}(0,\xi)={\rm Re}\,\Phi_{t}(0,\xi)$ for all $t\geq0$ and all $\xi\in\R^{d}.$
 \end{proposition}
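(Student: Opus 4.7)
The plan is to show that the process reflected through the origin has the same law as the original process when both start at $0$, which immediately implies that the characteristic function $\Phi_t(0,\cdot)$ is real-valued (its complex conjugate equals itself).

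First, I define the reflection operator $S\colon C_\infty(\R^d)\longrightarrow C_\infty(\R^d)$ by $(Sf)(x):=f(-x)$, note that $S^{2}=I$ and $S(C_c^{\infty}(\R^{d}))=C_c^{\infty}(\R^{d})$. Consider the reflected process $\tilde{F}_t:=-F_t$; starting $\tilde{F}$ at $y$ corresponds to starting $F$ at $-y$, so its Feller semigroup is $\tilde{P}_t = S P_t S$, and its generator is $(\tilde{\mathcal{A}},\tilde{\mathcal{D}}_{\tilde{\mathcal{A}}})=(S\mathcal{A}S,\,S(\mathcal{D}_{\mathcal{A}}))$. Since $S$ preserves $C_c^{\infty}(\R^d)$ and assumption (i) says $C_c^{\infty}(\R^d)$ is a core for $\mathcal{A}$, it is also a core for $\tilde{\mathcal{A}}$.

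The key computation is to verify that $\mathcal{A}|_{C_c^{\infty}(\R^{d})}=\tilde{\mathcal{A}}|_{C_c^{\infty}(\R^{d})}$. For $f\in C_c^{\infty}(\R^{d})$, a direct change of variables gives $\mathcal{F}(Sf)(\xi)=\mathcal{F}(f)(-\xi)$. Applying the pseudo-differential formula \eqref{eq1.1} and then another change of variables $\xi\mapsto -\xi$ yields
\begin{equation*}
(S\mathcal{A}Sf)(x)=-\int_{\R^{d}}q(-x,-\xi)\,e^{i\langle\xi,x\rangle}\mathcal{F}(f)(\xi)\,d\xi,
\end{equation*}
which by assumption (ii), $q(-x,-\xi)=q(x,\xi)$, coincides with $\mathcal{A}f(x)$. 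Thus the two closed operators $\mathcal{A}$ and $\tilde{\mathcal{A}}$ agree on the common core $C_c^{\infty}(\R^d)$; by the core property they must coincide on all of $\mathcal{D}_{\mathcal{A}}$, whence $P_t=\tilde{P}_t=SP_tS$ for every $t\geq0$.

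Consequently, for every bounded Borel $g$ we have $\mathbb{E}^{0}[g(F_t)]=\mathbb{E}^{0}[g(-F_t)]$; applied to $g(y)=e^{i\langle\xi,y\rangle}$ this gives
\begin{equation*}
\Phi_{t}(0,\xi)=\mathbb{E}^{0}[e^{i\langle\xi,F_t\rangle}]=\mathbb{E}^{0}[e^{-i\langle\xi,F_t\rangle}]=\overline{\Phi_{t}(0,\xi)},
\end{equation*}
so $\Phi_{t}(0,\xi)\in\R$, i.e.\ $\Phi_{t}(0,\xi)={\rm Re}\,\Phi_{t}(0,\xi)$. The only delicate point is justifying the passage from equality of generators on $C_c^{\infty}(\R^{d})$ to equality of the semigroups, which is exactly where the core hypothesis (i) is indispensable; the symbol symmetry (ii) is a purely algebraic input, so no further analytic estimates are needed.
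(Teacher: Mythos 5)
Your proof is correct and follows essentially the same route as the paper: reflect the process through the origin, check via the pseudo-differential representation and the symmetry $q(x,\xi)=q(-x,-\xi)$ that the reflected process has the same generator on $C_c^{\infty}(\R^{d})$, and use the core hypothesis to upgrade this to equality of the semigroups (the paper phrases this as equality of finite-dimensional distributions via \cite[Proposition 4.1.7]{Ethier-Kurtz-Book-1986}), whence $F_t$ and $-F_t$ have the same law under $\mathbb{P}^{0}$ and $\Phi_t(0,\xi)$ is real. The only cosmetic difference is that you work directly with the conjugated semigroup $SP_tS$ rather than with the reflected family of measures $\bar{\mathbb{P}}^{x}$, which changes nothing of substance.
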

\begin{proof}
Define $\bar{F}_t:=-F_t$ for $t\geq0$.
It is straightforward to check that the process $(\process{\bar{F}},\{\bar{\mathbb{P}}^{x}\}_{x\in\R^{d}})$, where  $\bar{\mathbb{P}}^{x}(\bar{F}_t\in dy):=\mathbb{P}^{-x}(-F_t\in dy)$ for $t\geq0$ and $x\in\R^{d},$ is a Feller process
 with
symbol $\bar{q}(x,\xi):=q(-x,-\xi)$ for $x,\xi\in\R^{d}$.
Furthermore, since $C_c^{\infty}(\R^{d})$
is an operator core for
$(\mathcal{A},\mathcal{D}_{\mathcal{A}})$ and,
by assumption,  $\bar{q}(x,\xi)=q(x,\xi)$ for all $x,\xi\in\R^{d}$, \cite[Proposition 4.1.7]{Ethier-Kurtz-Book-1986} and \cite[Proposition 4.6]{Schilling-Wang-2013} state that
$(\process{F},\{\mathbb{P}^{x}\}_{x\in\R^{d}})$ and $(\process{\bar{F}},\{\bar{\mathbb{P}}^{x}\}_{x\in\R^{d}})$ have the same finite-dimensional distributions. In particular, for any $t\geq0$ and any $x,\xi\in\R^{d}$, $$\Phi_t(x,\xi)=\mathbb{E}^{x}\left[e^{i\langle F_t-x,\xi\rangle}\right]=
\bar{\mathbb{E}}^{x}\left[e^{i\langle \bar{F}_t-x,\xi\rangle}\right]=\mathbb{E}^{-x}\left[e^{i\langle -F_t+x,\xi\rangle}\right]=\mathbb{E}^{-x}\left[e^{i\langle F_t-x,-\xi\rangle}\right]=\Phi_t(-x,-\xi),$$ which proves the assertion.
\end{proof}
Observe that  L\'evy processes and elliptic diffusion processes always satisfy (i). Also, many  jump L\'evy-type processes, like stable-like processes (see Example \ref{e4.5}), L\'evy-type processes generated by finite L\'evy measures (see Example \ref{e2}) and L\'evy-type processes with sufficiently regular symbol (see \cite[Theorem 3.23]{Bottcher-Schilling-Wang-2013}),  have this property. The condition in (ii) implies (actually it is equivalent) that the corresponding L\'evy triplet $(b(x),C(x),\nu(x,dy))$ satisfies: $b(x)=-b(-x)$,  $C(x)=C(-x)$ and $\nu(x,dy)=\nu(-x,-dy)$ for all $x\in\R^{d}$.

 The condition in \eqref{eq3} is a crucial assumption in Theorem \ref{tm3.4}. Clearly, it trivially holds true in the case of a symmetric L\'evy process. Moreover,  in that case the left hand side in  \eqref{eq3} is nonnegative. We conjecture that the same property should hold in the case of a L\'evy-type process satisfying the assumptions from Proposition \ref{p3.2}. Directly from Bochner's theorem (see \cite[Theorem 3.5.7]{Jacob-Book-I-2001}) we get the following.
\begin{proposition}\label{p3}Let $\process{F}$ be a L\'evy-type  process which admits a transition density function $p(t,x,y)$, $t>0$, $x,y\in\R^{d}$, satisfying:
 \begin{itemize}
  \item [(i)] $\displaystyle\int_{\R^{d}}|\Phi_t(0,\xi)|d\xi<\infty$ for all $t>0$;
   \item [(ii)] the function $y\longmapsto p(t,0,y)$ is continuous for all $t>0$;
   \item [(iii)]  the function $y\longmapsto p(t,0,y)$ is positive definite for all $t>0$, that is, for all $n\in\N$, all $y_1,\ldots,y_n\in\R^{d},$ all $c_1,\ldots,c_n\in\CC$ and all $t>0$, $$\sum_{i=1}^{n}\sum_{j=1}^{n}c_i\bar{c}_jp(t,0,y_i-y_j)\geq0.$$
  \end{itemize}
Then, $\Phi_{t}(0,\xi)\geq0$ for all $t\geq0$ and all $\xi\in\R^{d}.$
 \end{proposition}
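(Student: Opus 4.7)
The plan is to derive the statement directly from Bochner's theorem combined with Fourier inversion. Fix $t>0$. By (ii) and (iii), the map $y\longmapsto p(t,0,y)$ is continuous and positive-definite on $\R^{d}$, so Bochner's theorem \cite[Theorem 3.5.7]{Jacob-Book-I-2001} produces a finite non-negative Borel measure $\mu_{t}$ on $\R^{d}$ with
$$p(t,0,y)=\int_{\R^{d}}e^{i\langle y,\xi\rangle}\mu_{t}(d\xi),\qquad y\in\R^{d}.$$

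On the other hand, $\Phi_{t}(0,\xi)=\int_{\R^{d}}e^{i\langle \xi,y\rangle}p(t,0,y)\,dy$ is simply the characteristic function associated with the probability density $p(t,0,\cdot)$. Hypothesis (i) places $\Phi_{t}(0,\cdot)$ in $L^{1}(\R^{d})$, and together with the continuity of $p(t,0,\cdot)$ from (ii) this yields pointwise Fourier inversion,
$$p(t,0,y)=\frac{1}{(2\pi)^{d}}\int_{\R^{d}}e^{-i\langle y,\xi\rangle}\Phi_{t}(0,\xi)\,d\xi=\frac{1}{(2\pi)^{d}}\int_{\R^{d}}e^{i\langle y,\xi\rangle}\Phi_{t}(0,-\xi)\,d\xi,\qquad y\in\R^{d},$$
after the change of variables $\xi\longmapsto -\xi$. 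Comparing the two displays exhibits $p(t,0,\cdot)$ as the Fourier transform of both the non-negative measure $\mu_{t}$ and the finite complex measure $(2\pi)^{-d}\Phi_{t}(0,-\xi)\,d\xi$; by injectivity of the Fourier transform on the space of finite complex Borel measures these two measures must coincide. Consequently $(2\pi)^{-d}\Phi_{t}(0,-\xi)\geq 0$ for Lebesgue-almost every $\xi$, and the continuity of the characteristic function $\xi\longmapsto\Phi_{t}(0,\xi)$ extends the inequality to every $\xi\in\R^{d}$. The case $t=0$ is immediate, since $\Phi_{0}(0,\xi)\equiv 1$.

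No genuine obstacle arises here: the proposition is essentially a repackaging of Bochner's theorem, as signalled in the sentence preceding its statement. The only points that need a moment of care are the pointwise validity of Fourier inversion (which is precisely why assumption (i) is paired with the continuity of the density in (ii)) and the upgrade from almost-everywhere to everywhere positivity, which is automatic from continuity of any characteristic function. No probabilistic input about $\process{F}$ beyond the regularity of the density is used.
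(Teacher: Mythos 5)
Your argument is correct and is exactly the route the paper intends: the paper offers no proof beyond the remark that the proposition follows ``directly from Bochner's theorem,'' and your write-up simply supplies the standard details (Bochner representation of the continuous positive-definite density, pointwise Fourier inversion justified by (i)--(ii), uniqueness of Fourier transforms of finite measures, and the continuity upgrade from a.e.\ to everywhere nonnegativity). No gaps.
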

 Let us remark that the existence of a transition density function and (i)   follow if $$\int_{\R^{d}}\exp\left[-t\inf_{x\in\R^{d}}{\rm Re}\,q(x,\xi)\right]d\xi<\infty,\quad t>0,$$ and \eqref{eq0} hold (see \cite[Theorem 2.7]{Schilling-Wang-2013}). Moreover, under the same assumptions, (ii) follows as a direct consequence of the dominated convergence theorem. On the other hand, (iii)  holds if the functions $y\longmapsto p(t,0,y)$ is symmetric (which is the case under the assumptions of Proposition \ref{p3.2}) and $p(t,0,0)=\sup_{y\in\R^{d}}p(t,0,y)$ for all $t>0$.

\begin{theorem}\label{tm3.5}Let  $f:[0,\infty)\longrightarrow[0,\infty)$ be a  non-decreasing and continuously differentiable function and let  $\process{F}$ be a L\'evy-type process  with
 symbol $q(x,\xi)$, satisfying condition (\textbf{C4}). Then, $\process{F}$ is $f$-strongly transient if
 there exists  a constant $0\leq c<1$ such that  \begin{equation}\label{eq0}\sup_{x\in\R^{d}}|{\rm Im}\, q(x,\xi)|\leq c\inf_{x\in\R^{d}}{\rm Re}\,q(x,\xi),\quad \xi\in\R^{d},\end{equation}
  and \begin{equation}\label{eq3.4}\int_{B(0,r)}\int_{0}^{\infty}f(t)\exp\left[-\frac{t}{16}\inf_{x\in\R^{d}}{\rm Re}\,q(x,\xi)\right]dt\,d\xi<\infty\quad\textrm{for some}\ r>0.\end{equation}
\end{theorem}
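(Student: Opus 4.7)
The plan is to mirror the proof of Theorem~\ref{tm3.4}, but going the other way: in place of the Fej\'er-type \emph{minorant} of $1_{\bar{O}_2}$ used there, I will use a \emph{majorant} whose Fourier transform has compact support, and in place of the lower bound on ${\rm Re}\,\Phi_t(0,\xi)$ I will need an upper bound on $|\Phi_t(x,\xi)|$.

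To start, fix $x\in\R^{d}$ and an open bounded $O\subseteq\R^{d}$, and enclose $\bar{O}$ in an open bounded cube $O_2:=(-R,R)^{d}$. By Lemma~\ref{lm3.3} it is enough to show
$$\int_0^\infty f(t)\,\mathbb{P}^{x}(F_t\in\bar{O}_2)\,dt<\infty.$$
Let $r_0>0$ be the radius for which \eqref{eq3.4} holds and pick $r\leq\min\{\pi/R,\,r_0/\sqrt{d}\}$. The product test function
$$h(y):=\Bigl(\tfrac{\pi}{2}\Bigr)^{2d}\prod_{i=1}^{d}\Bigl(\frac{\sin(ry_i/2)}{ry_i/2}\Bigr)^{2}$$
then satisfies $h\geq 1$ on $[-R,R]^{d}\supseteq\bar{O}_2$ (via $\sin u/u\geq 2/\pi$ on $[0,\pi/2]$), while $\operatorname{supp}\mathcal{F}(h)\subseteq[-r,r]^{d}\subseteq B(0,r_0)$ by the standard computation of the Fej\'er kernel's Fourier transform.

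Next, exactly as in the proof of Theorem~\ref{tm3.4}, Fourier inversion $h(y)=\int e^{i\langle\xi,y\rangle}\mathcal{F}(h)(\xi)\,d\xi$ and the identity $\mathbb{E}^{x}[e^{i\langle\xi,F_t\rangle}]=e^{i\langle\xi,x\rangle}\Phi_t(x,\xi)$ give
$$\mathbb{P}^{x}(F_t\in\bar{O}_2)\;\leq\;\mathbb{E}^{x}[h(F_t)]\;=\;\int_{\R^{d}}\mathcal{F}(h)(\xi)\,e^{i\langle\xi,x\rangle}\Phi_t(x,\xi)\,d\xi\;\leq\;\int_{B(0,r_0)}|\mathcal{F}(h)(\xi)|\,|\Phi_t(x,\xi)|\,d\xi.$$
The crucial — and in my view hardest — ingredient is then the uniform upper bound
$$|\Phi_t(x,\xi)|\;\leq\;c_1\exp\Bigl[-\tfrac{t}{16}\inf_{x'\in\R^{d}}{\rm Re}\,q(x',\xi)\Bigr],\qquad t\geq0,\ x,\xi\in\R^{d},$$
valid, with some constant $c_1>0$, under hypothesis \eqref{eq0}. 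This is the complementary side of the lower bound ${\rm Re}\,\Phi_t(0,\xi)\geq\exp[-4t\sup_{x}|q(x,\xi)|]$ quoted from (the proof of) \cite[Lemma 2.2]{Sandric-TAMS-2014} in Theorem~\ref{tm3.4}. To extract it one would exploit \eqref{eq0} to transfer control from $\sup_x|q|$ to $\inf_x{\rm Re}\,q$, and the numerical factor $\tfrac{1}{16}$ most plausibly reflects a time-doubling/squaring step (estimating $|\Phi_t|^{2}$ via $\Phi_{2t}$ applied to a symmetrized object).

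Granting the above estimate, the rest is routine: Tonelli's theorem (all integrands are nonnegative) together with the fact that $\mathcal{F}(h)$, being continuous and compactly supported, is bounded yields
$$\int_0^\infty f(t)\,\mathbb{P}^{x}(F_t\in\bar{O}_2)\,dt\;\leq\;c_1\|\mathcal{F}(h)\|_\infty\int_{B(0,r_0)}\int_0^\infty f(t)\exp\Bigl[-\tfrac{t}{16}\inf_{x'\in\R^{d}}{\rm Re}\,q(x',\xi)\Bigr]dt\,d\xi<\infty$$
by \eqref{eq3.4}. Combining with Lemma~\ref{lm3.3} delivers $\mathbb{E}^{x}[f(L_{\bar{O}})]<\infty$ for every $x\in\R^{d}$ and every open bounded $O\subseteq\R^{d}$, which is precisely the $f$-strong transience of $\process{F}$.
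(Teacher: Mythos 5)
Your argument is correct and is essentially the paper's own proof: both reduce, via Lemma \ref{lm3.3}, to bounding $\int_0^\infty f(t)\,\mathbb{P}^{x}(F_t\in \bar{O}_2)\,dt$ for a cube $O_2$, majorize the indicator of that cube by a Fej\'er-type kernel whose Fourier transform is supported in a small ball, and then integrate the decay of $|\Phi_t(x,\xi)|$ against \eqref{eq3.4}. The one estimate you leave unproved, $|\Phi_t(x,\xi)|\leq\exp\left[-\tfrac{t}{16}\inf_{z\in\R^{d}}{\rm Re}\,q(z,2\xi)\right]$ under \eqref{eq0}, is exactly \cite[Theorem 2.7]{Schilling-Wang-2013} with $c_1=1$ (note the doubled co-variable, which merely forces you to shrink $r$ by a factor of $2$); the paper likewise invokes it by citation rather than proving it, so this is not a genuine gap.
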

\begin{proof}We follow the proof of \cite[Theorem 1.1]{Schilling-Wang-2013} and prove that, under the above assumptions,
$\mathbb{E}^{x}[L_{O_x}]<\infty$
for all $x\in\R^{d}$ and all open bounded neighborhoods $O_x\subseteq\R^{d}$ of $x$.
According to Lemma \ref{lm3.3}, it suffices to prove that $$\int_0^{\infty}f(t)\mathbb{P}^{x}(F_t\in O_x)dt<\infty$$ for all $x\in\R^{d}$ and all open bounded neighborhoods $O_x\subseteq\R^{d}$ of $x$. We proceed, and use the same notation, as in the proof of Theorem \ref{tm3.4}.
Fix $x=(x_1,\ldots, x_d)\in\R^{d}$ and $a>0$. Define
$$g_j(u):=e^{ix_ju}\left(1-\frac{|u|}{a}\right)1_{\left(-a,a\right)}(u),\quad u\in\R,\ j=1,\ldots,d,$$ and
$$h(y):=g_1(y_1)\cdots g(y_d),\quad y=(y_1,\ldots,y_d)\in\R^{d}.$$
Next, since $$\mathcal{F}^{-1}\left(\frac{\sin^{2}\left(\frac{a(\cdot-x_j)}{2}\right)}{a\pi^{2}
(\cdot-x_j)^{2}}\right)(u)=g_j(u),\quad u\in\R,\ j=1,\ldots,d,$$
 we conclude  $$\mathcal{F}(h)(\xi)=\frac{\sin^{2}\left(\frac{a(\xi_1-x_1)}{2}\right)}{a\pi^{2}
(\xi_1-x_1)^{2}}\cdots\frac{\sin^{2}\left(\frac{a(\xi_d-x_d)}{2}\right)}{a\pi^{2}
(\xi_d-x_d)^{2}},\quad \xi=(\xi_1,\ldots,\xi_d)\in\R^{d}.$$ Here, $\mathcal{F}^{-1}$ denotes the inverse Fourier transform.
Now, since $\sin u/u\geq1/2$ for all $|u|\leq\pi/3,$ we have $$\mathcal{F}(h)(\xi)\geq\frac{a^{d}}{4^{d}\pi^{2d}}, \quad \xi\in\left(x-2\pi/3a,x+2\pi/3 a\right)^{d},$$ where $\left(x-2\pi/3a,x+2\pi/3 a\right)^{d}:=\left(x_1-2\pi/3a,x_1+2\pi/3a\right)\times\cdots\times\left(x_d-2\pi/3a,x_d+2\pi/3a\right).$
 According to this and the monotone convergence theorem,
$$\frac{a^{d}}{8^{d}\pi^{2d}}\int_0^{\infty}f(t)\mathbb{P}^{x}\left(F_t\in \left(x-2\pi/3a,x+2\pi/3a\right)^{d}\right)dt\leq\lim_{\alpha\longrightarrow0}\int_0^{\infty}e^{-\alpha
t}f(t)\mathbb{E}^{x}\left[\mathcal{F}(h)(F_t)\right]dt.$$
Further, \cite[Theorem 1.1]{Jacob-1998} implies that $$\mathbb{E}^{x}\left[\mathcal{F}(h)(F_t)\right]=\frac{1}{(2\pi)^{d}}\int_{\R^{d}}e^{-i\langle x,y\rangle}h(y)\Phi_{t}(x,y)dy,\quad t\geq0,$$ which yields
\begin{align*}&\frac{a^{d}}{(4\pi)^{d}}\int_0^{\infty}f(t)\mathbb{P}^{x}\left(F_t\in \left(x-2\pi/3a,x+2\pi/3a\right)^{d}\right)dt\\&\leq\lim_{\alpha\longrightarrow0}\int_0^{\infty}\int_{\R^{d}}e^{-\alpha
t}f(t)e^{-i\langle x,y\rangle}h(y)\Phi_{t}(x,y)dy\,dt\\&=
\lim_{\alpha\longrightarrow0}\int_0^{\infty}\int_{\R^{d}}e^{-\alpha
t}f(t)e^{-i\langle x,y\rangle}h(y){\rm Re}\,\Phi_{t}(x,y)dy\,dt\\&\leq
\lim_{\alpha\longrightarrow0}\int_0^{\infty}\int_{\R^{d}}e^{-\alpha
t}f(t)e^{-i\langle x,y\rangle}h(y)|\Phi_{t}(x,y)|dy\,dt\\&\leq\lim_{\alpha\longrightarrow0}\int_0^{\infty}\int_{(-a,a)^{d}}e^{-\alpha
t}f(t)|\Phi_{t}(x,y)|dy\,dt\\&\leq\lim_{\alpha\longrightarrow0}\int_0^{\infty}\int_{B(0,a\sqrt{d})}e^{-\alpha
t}f(t)|\Phi_{t}(x,y)|dy\,dt
.\end{align*} Recall that $\Phi_t(x,y)=\mathbb{E}^{x}[\it{e}^{\it{i}\langle y, F_t-x\rangle}]$ for $t\geq0$ and $x,y\in\R^{d}.$
Now, due to \cite[Theorem 2.7]{Schilling-Wang-2013}, under \eqref{eq0} we have that $$|\Phi_{t}(x,y)|\leq\exp\left[-\frac{t}{16}\inf_{z\in\R^{d}}{\rm Re}\,q(z,2y)\right],\quad t\geq0,\ y\in\R^{d}.$$
Thus, \begin{align*}&\int_0^{\infty}f(t)\mathbb{P}^{x}\left(F_t\in \left(x-2\pi/3a,x+2\pi/3a\right)^{d}\right)dt\\&\leq\frac{(4\pi)^{d}}{a^{d}}\int_{B(0,a\sqrt{d})}\int_0^{\infty}f(t)\exp\left[-\frac{t}{16}\inf_{x\in\R^{d}}{\rm Re}\,q(x,2y)\right]dt\,dy\\&=\frac{(2\pi)^{d}}{a^{d}}\int_{B(0,2a\sqrt{d})}\int_0^{\infty}f(t)\exp\left[-\frac{t}{16}\inf_{x\in\R^{d}}{\rm Re}\,q(x,y)\right]dt\,dy
,\end{align*}
which completes the proof.
\end{proof}

Let us remark  that when $f(t)$ is a constant function, then the conditions in \eqref{eq3.1} and \eqref{eq3.4} become Chung-Fuchs type conditions for recurrence and transience, respectively, derived in \cite[Theorem 1.2]{Sandric-TAMS-2014} and \cite[Theorem 1.1]{Schilling-Wang-2013}.
Further, observe that if the condition in \eqref{eq3.1} holds for some $r_0>0$, then it also holds for all $r\geq r_0$. On the other hand, if \eqref{eq3.4} holds for some $r_0>0$, than it also holds for all $r\leq r_0$.
Clearly,  if we assume that
$$\inf_{r\leq|\xi|\leq
r_0}\sup_{x\in\R^{d}}|q(x,\xi)|>0,\quad 0<r< r_0,$$
then \eqref{eq3.1} does not depend on $r>0$. Recall, a function $f:\R^{d}\longrightarrow\R$ is said to be radial if for any orthogonal $d\times d$ matrix $O$, $f(x)=f(Ox)$ for all $x\in\R^{d}$.
\begin{proposition}\label{p3.8}Let $\process{F}$ be a L\'evy-type  process with
 symbol $q(x,\xi)$.
 If the function
$\xi\longmapsto\sup_{x\in\R^{d}}|q(x,\xi)|$ is radial, then the condition in \eqref{eq3.1} does not depend on $r>0$.
\end{proposition}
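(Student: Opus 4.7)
I would reduce to polar coordinates and argue via monotonicity plus a compactness bound on annuli. Writing $\phi(s):=\sup_{x\in\R^{d}}|q(x,\xi)|$ for any $\xi$ with $|\xi|=s$ (well-defined by the radial hypothesis), the radial change of variables gives
\[
\int_{B(0,r)}\int_{0}^{\ln 2/(4\phi(|\xi|))} f(t)\,dt\,d\xi = \sigma_{d-1}\int_{0}^{r} s^{d-1}\int_{0}^{\ln 2/(4\phi(s))} f(t)\,dt\,ds,
\]
where $\sigma_{d-1}$ is the surface area of the unit sphere in $\R^{d}$. Since the integrand is non-negative, the left-hand side is non-decreasing in $r$, which immediately recovers one implication: if \eqref{eq3.1} holds for some $r_{0}$, it holds for every $r\geq r_{0}$ (the monotonicity noted just before the proposition).

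For the reverse direction, I would assume \eqref{eq3.1} holds at some $r_{0}>0$, fix $0<r<r_{0}$, split $[0,r_{0}]=[0,r]\cup[r,r_{0}]$, and show the annular piece
\[
\int_{r}^{r_{0}} s^{d-1}\int_{0}^{\ln 2/(4\phi(s))} f(t)\,dt\,ds
\]
is finite, so that the infinite contribution must come from $[0,r]$ and hence \eqref{eq3.1} holds at $r$ as well. The finiteness on the annulus I would obtain as follows. For each fixed $x$, $\xi\mapsto|q(x,\xi)|$ is continuous (the symbol is continuous in the co-variable), so $\phi$ is a supremum of continuous functions and hence lower semi-continuous on $[0,\infty)$. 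On the compact set $[r,r_{0}]\subset(0,\infty)$, a lower semi-continuous function attains its infimum, and using $\phi(s)>0$ for $s>0$ one gets $\delta:=\inf_{[r,r_{0}]}\phi>0$. Hence the inner integral is bounded on $[r,r_{0}]$ by $\int_{0}^{\ln 2/(4\delta)} f(t)\,dt<\infty$ (finite since $f$ is continuous on a compact interval), and the annular integral is finite because $s^{d-1}$ is bounded on $[r,r_{0}]$.

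The main obstacle is the strict positivity $\phi(s)>0$ for $s>0$, which is not a formal consequence of radiality alone. In the setting of this paper it is justified by the standing assumption (\textbf{C4}) of open-set irreducibility and transience: a nontrivial zero of $\sup_{x}|q(x,\cdot)|$ on a sphere would force a Fourier-analytic degeneracy inconsistent with (\textbf{C4}), so I would either spell this out explicitly or insert a short preliminary lemma at this point. Once this positivity is in hand, the radial change of variables combined with the compactness argument above delivers the $r$-independence of \eqref{eq3.1}.
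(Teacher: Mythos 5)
Your reduction --- polar coordinates, monotonicity in $r$ for the easy direction, and finiteness of the annular integral via lower semicontinuity of $\phi(s):=\sup_{x\in\R^{d}}|q(x,\xi)|$, $|\xi|=s$, on a compact annulus --- is sound, and if completed would give a more transparent argument than the paper's (which argues by contradiction: it assumes the annular integral diverges and deduces, via the proof of Proposition 2.4 in the author's TAMS paper, that $\xi\mapsto\sup_{x\in\R^{d}}|q(x,\xi)|$ is continuous and periodic with period $\xi_0$ for every $\xi_0$ with $|\xi_0|=r_0$, hence $q\equiv0$, which is impossible). But the step you label ``the main obstacle'', namely $\phi(s)>0$ for all $s>0$, cannot be deferred: it is the entire mathematical content of the proposition, since everything else in your proof (change of variables, an l.s.c.\ function attaining its infimum on a compact set, local integrability of $f$) is routine. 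Your proposed justification --- that a zero of $\sup_{x}|q(x,\cdot)|$ on a sphere ``would force a Fourier-analytic degeneracy inconsistent with (\textbf{C4})'' --- is an assertion, not an argument, and it also misplaces where the work is done: open-set irreducibility enters only at the very last moment, while the propagation of a single zero to a global degeneracy comes from the negative-definite structure of the symbol.

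Concretely, the missing lemma is proved as follows. For each fixed $x$ the map $\xi\longmapsto\sqrt{|q(x,\xi)|}$ is subadditive (because $q(x,\cdot)$ is continuous negative definite), so the zero set of $q(x,\cdot)$ is a closed additive subgroup of $\R^{d}$. If $\phi(s_0)=0$ for some $s_0>0$, then by radiality $q(x,\xi)=0$ for every $x$ and every $\xi$ with $|\xi|=s_0$; for $d\geq2$ the subgroup generated by this sphere is all of $\R^{d}$ (differences of two vectors of length $s_0$ already fill the ball $B(0,2s_0)$), so $q\equiv0$, contradicting irreducibility. For $d=1$ the ``sphere'' $\{\pm s_0\}$ generates only the lattice $s_0\ZZ$, and one obtains instead that each $q(x,\cdot)$ is periodic with period $s_0$ (a lattice-type symbol); excluding this requires an additional argument, and this is precisely what the paper outsources to the cited proposition. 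Until you supply this lemma --- including the one-dimensional case --- your proof does not close.
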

\begin{proof}
Let $r_0>0$ be such that
$$\int_{B(0,r_0)}\int_{0}^{\frac{\ln 2}{4\sup_{x\in\R^{d}}|q(x,\xi)|}}f(t)dt\,d\xi<\infty$$
and
$$\int_{B(0,r)}\int_{0}^{\frac{\ln 2}{4\sup_{x\in\R^{d}}|q(x,\xi)|}}f(t)dt\,d\xi=\infty,\quad r>r_0.$$ In particular,
$$\int_{B(0,r)\setminus B(0,r_0)}\int_{0}^{\frac{\ln 2}{4\sup_{x\in\R^{d}}|q(x,\xi)|}}f(t)dt\,d\xi=\infty,\quad r>r_0.$$ Now, according to (the proof of) \cite[Proposition 2.4]{Sandric-TAMS-2014}, the above relation (together with the radiality property of the function $\xi\longmapsto\sup_{x\in\R^{d}}|q(x,\xi)|$) implies that for any $\xi_0\in\R^{d}$, $|\xi_0|=r_0$, the function $\xi\longmapsto\sup_{x\in\R^{d}}|q(x,\xi)|$ is continuous and periodic with period $\xi_0$. Consequently, $q(x,\xi)=0$ for all $x,\xi\in\R^{d}$, which is impossible.
\end{proof}

Finally, recall that if $\process{L}$ is a symmetric
L\'evy process (that is, $\process{L}\stackrel{\hbox{\scriptsize{$\textrm{d}$}}}{=} \process{-L}$),  then $q(\xi)={\rm Re}\, q(\xi)$ and
$\mathbb{E}^{x}[e^{i\langle\xi, L_t-x\rangle}]={\rm Re}\,\mathbb{E}^{x}[e^{i\langle\xi, L_t-x\rangle}]=e^{-tq(\xi)}>0$ for all $t\geq0$ and all $x,\xi\in\R^{d}.$
Hence, if, in addition, $\process{L}$ is open-set irreducible,  Theorems \ref{tm3.4} and \ref{tm3.5} apply.
However, as we commented in the first section, because of the stationarity and independence
of the increments, the assumptions on the open-set irreducibility is not needed in order to introduce the notion of the $f$-weak and $f$-strong transience, and therefore to derive Chung-Fuchs type conditions for these properties, of L\'evy processes.
 By completely the same reasoning as in the proofs of Theorems \ref{tm3.4} and \ref{tm3.5} we easily  obtain the following criterion for the
 $f$-weak and $f$-strong transience of symmetric L\'evy processes (see also \cite[Theorem 3.4]{Sato-Watanabe-2004}).
 \begin{corollary}\label{c3.9}Let  $f:[0,\infty)\longrightarrow[0,\infty)$ be a  non-decreasing and continuously differentiable function.
Then, a symmetric and transient L\'evy process $\process{L}$ with symbol $q(\xi)$ is $f$-weakly transient if, and only if,
$$\int_{B(0,r)}\int_{0}^{\infty}f(t)e^{-tq(\xi)}dt\,d\xi=\infty\quad\textrm{for some (all)}\ r>0.$$
\end{corollary}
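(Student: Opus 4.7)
The plan is to adapt the proofs of Theorems \ref{tm3.4} and \ref{tm3.5} to the symmetric L\'evy setting, exploiting two substantial simplifications. For a symmetric L\'evy process with symbol $q(\xi)$ one has the exact identity $\Phi_t(x,\xi)=\mathbb{E}^{x}[e^{i\langle\xi,L_t-x\rangle}]=e^{-tq(\xi)}$ for every $x,\xi\in\R^d$ and every $t\geq 0$, so $\Phi_t(x,\xi)$ is real, non-negative and independent of $x$; in particular $\mathrm{Im}\,q\equiv 0$, so hypothesis \eqref{eq0} of Theorem \ref{tm3.5} holds with $c=0$. Stationarity and independence of increments let me reduce Lemma \ref{lm3.3} to the classical Sato--Watanabe comparison (\cite[Lemma 2.2]{Sato-Watanabe-2004}), thereby dispensing with the open-set irreducibility assumption in (\textbf{C4}) throughout.

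For the direction ``the double integral diverges for some $r>0$ $\Longrightarrow$ $\process{L}$ is $f$-weakly transient'' I would copy the proof of Theorem \ref{tm3.4} with $x=0$. Because $\Phi_t(0,\xi)=e^{-tq(\xi)}\geq 0$ on all of $[0,\infty)$ rather than only on the short interval $[0,t_0(\xi)]$, the $t$-integral does not need to be split, and the auxiliary assumption \eqref{eq3} is automatic. Tonelli and two applications of monotone convergence yield
\begin{equation*}
\int_0^{\infty}f(t)\,\mathbb{P}^{0}(L_t\in(-a,a)^d)\,dt\;\geq\;\int_{\R^{d}}\mathcal{F}(h)(\xi)\int_0^{\infty}f(t)e^{-tq(\xi)}\,dt\,d\xi,
\end{equation*}
and the lower bound $\mathcal{F}(h)(\xi)\geq ca^d/(2\pi)^{2d}$ on $B(0,r)$ for $a$ small enough (established in the proof of Theorem \ref{tm3.4}), together with the assumed divergence, forces the left-hand side to be infinite, hence $\mathbb{E}^{0}[f(L_{(-a,a)^d})]=\infty$ via Lemma \ref{lm3.3}.

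For the direction ``the double integral converges for some $r>0$ $\Longrightarrow$ $\process{L}$ is $f$-strongly transient'' I would rerun the proof of Theorem \ref{tm3.5}; the twisted bump $h$ centred at $x$ and its Fourier transform are unchanged, and since $|\Phi_t(x,y)|=e^{-tq(y)}$ is an exact identity (sharper than the bound $\exp[-(t/16)\inf_{z}\mathrm{Re}\,q(z,2y)]$ used there), the upper estimate simplifies to
\begin{equation*}
\int_0^{\infty}f(t)\,\mathbb{P}^{x}(L_t\in(x-2\pi/3a,x+2\pi/3a)^d)\,dt\;\leq\;\frac{(4\pi)^{d}}{a^{d}}\int_{B(0,a\sqrt{d})}\int_0^{\infty}f(t)e^{-tq(y)}\,dt\,dy,
\end{equation*}
which is finite for $a$ chosen so that $a\sqrt{d}\leq r$. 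The ``some $r$ versus all $r$'' clause is then free: the two implications above make strong and weak transience correspond to convergence and divergence of the double integral at every $r>0$, since the two notions are mutually exclusive. The only bookkeeping obstacle I anticipate is verifying cleanly that Lemma \ref{lm3.3} and Theorem \ref{tm3.1} -- both stated under (\textbf{C4}) -- may be replaced by their translation-invariant L\'evy analogues, so that dropping open-set irreducibility in the hypotheses of the corollary is legitimate.
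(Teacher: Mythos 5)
Your proposal is correct and follows exactly the route the paper intends: the paper proves this corollary only by remarking that it follows ``by completely the same reasoning as in the proofs of Theorems \ref{tm3.4} and \ref{tm3.5}'', using precisely the simplifications you identify ($\Phi_t(x,\xi)=e^{-tq(\xi)}\geq0$ so that \eqref{eq3} is automatic and \eqref{eq0} holds with $c=0$, and replacing Lemma \ref{lm3.3} by the translation-invariant comparison of Sato--Watanabe so that open-set irreducibility can be dropped). Your resolution of the ``some versus all'' clause via mutual exclusivity of weak and strong transience is also the intended one.
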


\section{Algebraic Weak and Strong Transience of L\'evy-Type Processes}\label{s3}
In this section, we  concentrate on the $f$-weak and $f$-strong transience with respect to $f(t):=t^{\kappa}$ for  some $\kappa>0$ and we use the terminology $\kappa$-weak and $\kappa$-strong transience, respectively.
Obviously, the Chung-Fuchs type conditions derived in Theorems \ref{tm3.4} and \ref{tm3.5} now read
\begin{align}\label{eq4.1}&\int_{B(0,r)}\frac{d\xi}{\left(\sup_{x\in\R^{d}}|q(x,\xi)|\right)^{\kappa+1}}=\infty\quad\textrm{for some}\ r>0\\
\label{eq4.3}&\int_{B(0,r)}\frac{d\xi}{\left(\inf_{x\in\R^{d}}{\rm Re}\,q(x,\xi)\right)^{\kappa+1}}<\infty\quad\textrm{for some}\ r>0,\end{align}
respectively.
As a simple consequence, we also get the following Chung-Fuchs type condition for the $\kappa$-strong transience.
\begin{corollary}\label{c4.1}Let $\kappa>0$ and let $\process{F}$ be a L\'evy-type process   with  symbol $q(x,\xi)$ satisfying $|{\rm Im}\, q(x,\xi)|\leq c\, {\rm Re}\, q(x,\xi)$ for some $c\geq0$ and all $x,\xi\in\R^{d}$. Then, the condition in \eqref{eq4.1} is equivalent to
$$\int_{B(0,r)}\frac{d\xi}{\left(\sup_{x\in\R^{d}}{\rm Re}\,q(x,\xi)\right)^{\kappa+1}}<\infty\quad\textrm{for some}\ r>0,$$
and the condition in \eqref{eq4.3} is equivalent to
$$\int_{B(0,r)}\frac{d\xi}{\left(\inf_{x\in\R^{d}}|q(x,\xi)|\right)^{\kappa+1}}<\infty\quad\textrm{for some}\ r>0.$$
\end{corollary}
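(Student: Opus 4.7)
The plan is to show that, under the hypothesis $|{\rm Im}\,q(x,\xi)|\leq c\,{\rm Re}\,q(x,\xi)$, the quantities $|q(x,\xi)|$ and ${\rm Re}\,q(x,\xi)$ are pointwise comparable up to multiplicative constants depending only on $c$, and that this comparability passes cleanly to the suprema and infima over $x\in\R^{d}$ appearing in the integrands of \eqref{eq4.1} and \eqref{eq4.3}. Once this is in hand, the two claimed equivalences are immediate, since each pair of integrands differs only by a constant factor.

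First, I would use the identity $|q(x,\xi)|=\sqrt{({\rm Re}\,q(x,\xi))^{2}+({\rm Im}\,q(x,\xi))^{2}}$ together with the hypothesis (which in particular forces ${\rm Re}\,q(x,\xi)\geq 0$, in line with $\xi\mapsto q(x,\xi)$ being continuous negative definite) to derive the pointwise two-sided bound
\[{\rm Re}\,q(x,\xi)\ \leq\ |q(x,\xi)|\ \leq\ \sqrt{1+c^{2}}\,{\rm Re}\,q(x,\xi),\qquad x,\xi\in\R^{d}.\]
Next, I would transfer these inequalities to the suprema and infima over $x$. Taking $\sup_{x\in\R^{d}}$ preserves both directions, yielding
\[\sup_{x\in\R^{d}}{\rm Re}\,q(x,\xi)\ \leq\ \sup_{x\in\R^{d}}|q(x,\xi)|\ \leq\ \sqrt{1+c^{2}}\sup_{x\in\R^{d}}{\rm Re}\,q(x,\xi).\]
For the infima, the lower estimate gives $\inf_{x}{\rm Re}\,q(x,\xi)\leq\inf_{x}|q(x,\xi)|$ directly, while the upper estimate applied at an arbitrary $x_{0}\in\R^{d}$, followed by taking $\inf_{x_{0}}$, gives $\inf_{x}|q(x,\xi)|\leq\sqrt{1+c^{2}}\inf_{x}{\rm Re}\,q(x,\xi)$.

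Finally, raising these bounds to the power $\kappa+1$ and taking reciprocals produces pointwise (in $\xi$) two-sided bounds between the integrands appearing in the corollary, with ratios bounded by $(1+c^{2})^{\pm(\kappa+1)/2}$. Integrating over $B(0,r)$ preserves these constant ratios, so $\int_{B(0,r)}(\sup_{x}|q(x,\xi)|)^{-(\kappa+1)}d\xi$ and $\int_{B(0,r)}(\sup_{x}{\rm Re}\,q(x,\xi))^{-(\kappa+1)}d\xi$ are simultaneously finite or infinite for every fixed $r>0$, and analogously for the two inf-type integrals; this yields both equivalences. No substantive obstacle is expected, because the whole argument rests on the elementary algebraic comparability of $|q|$ and ${\rm Re}\,q$ afforded by the hypothesis and the monotonicity of $\sup$ and $\inf$ under pointwise inequalities.
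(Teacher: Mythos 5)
Your proof is correct and takes essentially the same approach as the paper: the paper's entire argument consists of the two-sided bound ${\rm Re}\,q(x,\xi)\leq|q(x,\xi)|\leq\sqrt{1+c^{2}}\,{\rm Re}\,q(x,\xi)$, from which the comparability of the sup- and inf-type integrands follows exactly as you spell out. Nothing further is needed.
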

\begin{proof}
The assertion easily follows from the following inequalities
$$\rm{Re}\,\it{q}(x,\xi)\leq\sqrt{(\rm{Re}\,\it{q}(x,\xi))^{\rm{2}}+(\rm{Im}\,\it{q}(x,\xi))^{\rm{2}}}=|q(x,\xi)|\leq \sqrt{{\rm 1+{\it c}^{2}}}\,\rm{Re}\,\it{q}(x,\xi),\quad x,\xi\in\R^{d}.$$
\end{proof}

In Proposition \ref{p3.8} we discussed conditions under which the relation in \eqref{eq3.1} (and hence in \eqref{eq4.1}) does not depend on $r>0$. By the same reasoning as before, we get that \eqref{eq4.3} does not depend on $r>0$ if
 $$\inf_{r_0\leq|\xi|\leq
r}\inf_{x\in\R^{d}}{\rm Re}\,q(x,\xi)>0,\quad r> r_0>0.$$
Let us remark that the above condition is
 satisfied if
$$\inf_{|\xi|=1}\inf_{x\in\R^{d}}\left(\langle\xi,C(x)\xi\rangle+\int_{B\left(0,\frac{1}{r}\right)}\langle\xi,y\rangle^{2}\nu(x,dy)\right)>0,\quad r>r_0>0,$$ (see \cite{Sandric-TAMS-2014}).
In addition, if we assume
that
 the function
$\xi\longmapsto\inf_{x\in\R^{d}}\sqrt{\rm{Re}\,\it{q}(x,\xi)}$ is
radial and subadditive (that is, $\inf_{x\in\R^{d}}\sqrt{\rm{Re}\,\it{q}(x,\xi+\eta)}\leq\inf_{x\in\R^{d}}\sqrt{\rm{Re}\,\it{q}(x,\xi)}+\inf_{x\in\R^{d}}\sqrt{\rm{Re}\,\it{q}(x,\eta)}$ for all $\xi,\eta\in\R^{d}$), then the condition in
\eqref{eq4.3} do not depend on $r>0$ (see \cite[Proposition 2.4]{Sandric-TAMS-2014}).

 In
the following proposition we discuss the dependence of the $\kappa$-weak and $\kappa$-strong transience on  the dimension of the state space.
\begin{proposition}\label{p4.2} Let $\kappa>0$ and let $\process{F}$ be a L\'evy-type  process with  symbol $q(x,\xi)$.
\begin{enumerate}
  \item [(i)]If
$$\limsup_{|\xi|\longrightarrow0}\frac{\sup_{x\in\R^{d}}|q(x,\xi)|}{|\xi|^{\gamma}}<\infty$$
and $d\leq(\kappa+1)\gamma$ for
 some $\gamma>0$, then \eqref{eq4.1} holds true.
 \item [(ii)]If
$$\liminf_{|\xi|\longrightarrow0}\frac{\inf_{x\in\R^{d}}\rm{Re}\,\it{q(x,\xi)}}{|\xi|^{\gamma}}>0$$
and $d>(\kappa+1)\gamma$ for
 some $\gamma>0$,
   then \eqref{eq4.3} holds true.
\end{enumerate}
\end{proposition}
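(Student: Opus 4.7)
The plan is to reduce both parts to the elementary computation of $\int_{B(0,r)}|\xi|^{-\alpha}d\xi$ in polar coordinates, which converges if and only if $\alpha<d$.

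For part (i), the $\limsup$ hypothesis gives me constants $C>0$ and $r_0>0$ with
\[
\sup_{x\in\R^{d}}|q(x,\xi)|\leq C|\xi|^{\gamma}\quad\text{for all } |\xi|\leq r_0.
\]
Plugging this lower bound on the reciprocal into \eqref{eq4.1} yields
\[
\int_{B(0,r_0)}\frac{d\xi}{(\sup_{x}|q(x,\xi)|)^{\kappa+1}}\;\geq\;\frac{1}{C^{\kappa+1}}\int_{B(0,r_0)}\frac{d\xi}{|\xi|^{(\kappa+1)\gamma}},
\]
and passing to spherical coordinates reduces the right-hand side to a constant multiple of $\int_{0}^{r_{0}}\rho^{\,d-1-(\kappa+1)\gamma}d\rho$. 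This integral diverges precisely when $d-1-(\kappa+1)\gamma\leq -1$, i.e.\ when $d\leq(\kappa+1)\gamma$, which is exactly the standing hypothesis. Hence \eqref{eq4.1} holds with $r=r_{0}$.

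For part (ii), the $\liminf$ hypothesis symmetrically supplies $c>0$ and $r_{0}>0$ with
\[
\inf_{x\in\R^{d}}\mathrm{Re}\,q(x,\xi)\geq c|\xi|^{\gamma}\quad\text{for all } |\xi|\leq r_{0},
\]
and, using this as a lower bound on the denominator in \eqref{eq4.3},
\[
\int_{B(0,r_0)}\frac{d\xi}{(\inf_{x}\mathrm{Re}\,q(x,\xi))^{\kappa+1}}\;\leq\;\frac{1}{c^{\kappa+1}}\int_{B(0,r_0)}\frac{d\xi}{|\xi|^{(\kappa+1)\gamma}}.
\]
Again by spherical coordinates, the right-hand side is a constant multiple of $\int_{0}^{r_{0}}\rho^{\,d-1-(\kappa+1)\gamma}d\rho$, which is finite exactly when $d-1-(\kappa+1)\gamma>-1$, i.e.\ when $d>(\kappa+1)\gamma$. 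This gives \eqref{eq4.3} with $r=r_{0}$.

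There is no real obstacle here: both directions are essentially the same one-line calculation in polar coordinates. The only mild point to be careful about is that the $\limsup$/$\liminf$ conditions only furnish the correct asymptotic behavior of $\sup_{x}|q(x,\xi)|$ (resp.\ $\inf_{x}\mathrm{Re}\,q(x,\xi)$) on a sufficiently small ball around the origin, so the value of $r$ in \eqref{eq4.1} and \eqref{eq4.3} may have to be chosen small; this is precisely why both conditions are stated as ``for some $r>0$''.
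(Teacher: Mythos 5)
Your proof is correct and follows essentially the same route as the paper: bound $\sup_x|q(x,\xi)|$ above (resp.\ $\inf_x\mathrm{Re}\,q(x,\xi)$ below) by a constant multiple of $|\xi|^{\gamma}$ near the origin, and reduce to the polar-coordinate integral $\int_0^{r}\rho^{d-1-(\kappa+1)\gamma}d\rho$, which diverges precisely when $d\leq(\kappa+1)\gamma$. No further comment needed.
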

\begin{proof}\begin{enumerate}
               \item [(i)]By assumption, there exist constants $c>0$ (large enough) and  $r>0$ (small enough), such that
$$\int_{B(0,r)}\frac{d\xi}{\left(\sup_{x\in\R^{d}}|q(x,\xi)|\right)^{\kappa+1}}\geq\frac{1}{c}\int_{B(0,r)}\frac{d\xi}{|\xi|^{(\kappa+1)\gamma}}=\frac{S_{d}}{c}\int_0^{r}\rho^{d-1-(\kappa+1)\gamma}d\rho.$$ Here, $S_d$ denotes the surface of a $d$-dimensional unit ball.
            \item [(ii)]By assumption, there exist constants $c>0$  and  $r>0$ (small enough), such that
$$\int_{B(0,r)}\frac{d\xi}{(\inf_{x\in\R^{d}}{\rm Re}\,q(x,\xi))^{\kappa+1}}\leq\frac{1}{c}\int_{B(0,r)}\frac{d\xi}{|\xi|^{(\kappa+1)\gamma}}=\frac{S_{d}}{c}\int_0^{r}\rho^{d-1-(\kappa+1)\gamma}d\rho.$$
             \end{enumerate}
\end{proof}
As a direct consequence of the above proposition we get the following.
\begin{theorem} \label{tm4.3}Let $\kappa>0$ and let $\process{F}$ be a L\'evy-type  process  with  symbol
$q(x,\xi)$ and L\'evy triplet $(b(x),C(x),\nu(x,dy))$. \begin{itemize}
          \item [(i)]
If  $q(x,\xi)=q(x,-\xi)$ for all $x,\xi\in\R^{d}$ (that is, $b(x)=0$ and $\nu(x,dy)$ is a symmetric measure for all $x\in\R^{d}$),  $d\leq2(\kappa+1)$ and
$$\sup_{x\in\R^{d}}\int_{\R^{d}}|y|^{2}\nu(x,dy)<\infty,$$ then  \eqref{eq4.1} holds true.
  \item [(ii)] If  $d>2(\kappa+1)$ and
\begin{equation}\label{eq4.4}\liminf_{|\xi|\longrightarrow0}\frac{\inf_{x\in\R^{d}}\left(\langle\xi,C(x)\xi\rangle+\int_{\{|y|\leq
\frac{\pi}{2|\xi|}\}}\langle\xi,y\rangle^{\rm{2}}\nu(\it{x},dy)\right)}{|\xi|^{2}}>\rm{0},\end{equation}
then \eqref{eq4.3} holds true.
\end{itemize}
\end{theorem}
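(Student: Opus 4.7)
The plan is to deduce both parts directly from Proposition \ref{p4.2} with $\gamma=2$, verifying the respective limsup/liminf hypothesis on the symbol by elementary manipulation of its L\'evy--Khintchine representation (recall that condition (\textbf{C3}) forces $a(x)\equiv 0$).

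For (i), the symmetry $q(x,\xi)=q(x,-\xi)$ makes $q(x,\cdot)$ real-valued for every $x\in\R^{d}$, which by uniqueness of the L\'evy--Khintchine representation amounts to $b(x)=0$ and $\nu(x,\cdot)$ symmetric, so that
$$q(x,\xi)=\tfrac{1}{2}\langle\xi,C(x)\xi\rangle+\int_{\R^{d}}(1-\cos\langle\xi,y\rangle)\nu(x,dy).$$
Combining the elementary bounds $0\leq 1-\cos u\leq u^{2}/2$ and $\langle\xi,y\rangle^{2}\leq|\xi|^{2}|y|^{2}$ with the uniform estimate $\sup_{x\in\R^{d}}\|C(x)\|<\infty$ (a consequence of (\textbf{C2}) via \cite[Lemma 2.1]{Schilling-PTRF-1998}) and the standing assumption $\sup_{x\in\R^{d}}\int_{\R^{d}}|y|^{2}\nu(x,dy)<\infty$, I arrive at $\sup_{x\in\R^{d}}|q(x,\xi)|\leq C_{0}|\xi|^{2}$ for a suitable constant $C_{0}>0$ and all $\xi\in\R^{d}$. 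This verifies the limsup hypothesis of Proposition \ref{p4.2}(i) with $\gamma=2$, and the assumption $d\leq 2(\kappa+1)=(\kappa+1)\gamma$ then yields \eqref{eq4.1}.

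For (ii), starting from
$${\rm Re}\,q(x,\xi)=\tfrac{1}{2}\langle\xi,C(x)\xi\rangle+\int_{\R^{d}}(1-\cos\langle\xi,y\rangle)\nu(x,dy),$$
I use the elementary lower bound $1-\cos u\geq c_{0}u^{2}$ valid on $|u|\leq\pi/2$ (e.g., with $c_{0}=1/4$), restrict the integral to $\{|y|\leq\pi/(2|\xi|)\}$ (on which $|\langle\xi,y\rangle|\leq\pi/2$), and drop the nonnegative tail to obtain
$${\rm Re}\,q(x,\xi)\geq c_{1}\Bigl(\langle\xi,C(x)\xi\rangle+\int_{\{|y|\leq\pi/(2|\xi|)\}}\langle\xi,y\rangle^{2}\nu(x,dy)\Bigr),$$
where $c_{1}:=\min\{1/2,c_{0}\}>0$. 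Taking the infimum over $x\in\R^{d}$ on both sides (the positive constant $c_{1}$ factors out), dividing by $|\xi|^{2}$, and invoking \eqref{eq4.4} yields $\liminf_{|\xi|\to 0}\inf_{x\in\R^{d}}{\rm Re}\,q(x,\xi)/|\xi|^{2}>0$, so that Proposition \ref{p4.2}(ii) with $\gamma=2$ and $d>2(\kappa+1)=(\kappa+1)\gamma$ furnishes \eqref{eq4.3}.

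The only step requiring a word of care is the estimate $\sup_{x\in\R^{d}}\|C(x)\|<\infty$ in part (i); this is not an additional assumption but is built into condition (\textbf{C2}), as recorded in the discussion immediately following that condition. Everything else is a routine estimate on the L\'evy--Khintchine representation, so the main content of the theorem lies in recognizing that the natural exponent is $\gamma=2$ and that the threshold $2(\kappa+1)$ is exactly what separates the regimes treated by the two halves of Proposition \ref{p4.2}.
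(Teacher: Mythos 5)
Your proposal is correct and follows essentially the same route as the paper: both parts are reduced to Proposition \ref{p4.2} with $\gamma=2$ by bounding the symbol above via $1-\cos u\leq u^{2}$ (together with the boundedness of $C(x)$ from (\textbf{C2}) and the second-moment condition on $\nu$) and below via $1-\cos u\geq c_{0}u^{2}$ on $|u|\leq\pi/2$ after restricting the integral to $\{|y|\leq\pi/(2|\xi|)\}$. The paper's proof is the same elementary L\'evy--Khintchine estimate, with only cosmetic differences in the constants.
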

\begin{proof}
\begin{enumerate}
  \item [(i)]
  First, observe that \begin{align*}\sup_{x\in\R^{d}}|q(x,\xi)|&=\sup_{x\in\R^{d}}\left(\langle\xi,C(x)\xi\rangle+\int_{\R^{d}}(1-\cos\langle\xi,y\rangle)\nu(x,dy)\right)\\
  &\leq |\xi|^{2}\left(d\sup_{x\in\R^{d}}\max_{1\leq i,j\leq
d}|c_{ij}(x)|+\sup_{x\in\R^{d}}\int_{\R^{d}}|y|^{2}\nu(x,dy)\right),\end{align*}
where we used the facts that  for  an arbitrary square matrix $M=(m_{ij})_{1\leq
i,j\leq d}$ and $v\in\R^{d}$,  $|\langle
v,Mv\rangle|\leq|v||Mv|\leq d\max_{1\leq i,j\leq
d}|m_{ij}||v|^{2}$ and $1-\cos u\leq u^{2}$ for
all $u\in\R$. Now, the claim follows as a direct consequence of Proposition \ref{p4.2}.
  \item [(ii)]   By employing the fact  that $1-\cos
u\geq u^{2}/\pi$ for all  $|u|\leq\pi/2$,
\begin{align*}\inf_{x\in\R^{d}}{\rm Re}\,q(x,\xi)&=\inf_{x\in\R^{d}}\left(\langle\xi,c(x)\xi\rangle+\int_{\R^{d}}(1-\cos\langle\xi,y\rangle)\nu(x,dy)\right)\\&\geq
\frac{1}{\pi}\inf_{x\in\R^{d}}\left(\langle\xi,c(x)\xi\rangle+\int_{\{|y|\leq
\frac{\pi}{2|\xi|}\}}\langle\xi,y\rangle^{\rm{2}}\nu(\it{x},dy)\right),\end{align*} which together with Proposition \ref{p4.2} proves the assertion.
\end{enumerate}
\end{proof}
Intuitively, the relation in \eqref{eq4.4} actually says that the underlying L\'evy-type process in non-degenerate, that is, it  either has a non-degenerate diffusion part or non-degenerate jump part.
Now, we give some applications of the above results.
\begin{example}[Elliptic diffusion processes]\label{e4.4}{\rm
Assume that the functions $b=(b_i)_{1\leq i\leq d}:\R^{d}\longrightarrow\R^{d}$ and $C=(c_{ij})_{1\leq i,j\leq d}:\R^{d}\longrightarrow\R^{d}\times\R^{d}$ satisfy the following:
 \begin{itemize}
  \item [(i)] $b(x)$ is bounded and continuous;
   \item [(ii)] $C(x)$ is   bounded, symmetric and Lipschitz continuous;
   \item [(ii)] for some constant $c>0$ and all  $\xi\in\R^{d}$, $\inf_{x\in\R^{d}}\langle \xi,C(x)\xi\rangle\geq c|\xi|^{2}.$
 \end{itemize}
  Then, according to \cite[Theorem V.24.1]{Rogers-Williams-Book-II-2000} and \cite[Theorem 2.3]{Stramer-Tweedie-1997},
  there exists a unique open-set irreducible elliptic diffusion process satisfying conditions (\textbf{C1}), (\textbf{C2}) and (\textbf{C3})   (hence a L\'evy-type  process),
  with symbol of the form  $$q(x,\xi)=- i\langle \xi,b(x)\rangle +
\frac{1}{2}\langle\xi,C(x)\xi\rangle,\quad x,\xi\in\R^{d}.$$
Fix $\kappa>0.$ Then,
\begin{itemize}
 \item[(i)] if $\sup_{x\in\R^{d}}|b(x)|>0$   and $d\leq(\kappa+1)$, Proposition \ref{p4.2} implies that \eqref{eq4.1} holds true.
  \item [(ii)]  if $b(x)=0$  for all $x\in\R^{d}$ and $d\leq2(\kappa+1)$,  Theorem \ref{tm4.3}  entails that \eqref{eq4.1} holds true.
  \item [(iii)]  if  $d>2(\kappa+1)$, Theorem \ref{tm4.3} implies that \eqref{eq4.3} holds true.
\end{itemize}
  Let us remark  that if $d\leq2$ and $b(x)=0$  for all $x\in\R^{d}$, then the underlying elliptic diffusion process satisfies the Chung-Fuch type condition for recurrence given in \cite[Theorem 1.2]{Sandric-TAMS-2014} and if $d\geq3$ it is always transient (see \cite[Theorem 2.9]{Sandric-TAMS-2014}).
In particular,  a standard $d$-dimensional, $d\geq3$,  zero drift Brownian motion is $\kappa$-weakly transient if, and only if, $d\leq2(\kappa+1)$.
}
\end{example}

\begin{example}[Stable-like processes]\label{e4.5}
\rm{
Let $\alpha:\R^{d}\longrightarrow(0,2)$, $\beta:\R^{d}\longrightarrow\R^{d}$ and $\gamma:\R^{d}\longrightarrow(0,\infty)$  be such that:
\begin{itemize}
  \item [(i)] $\alpha,\beta,\gamma\in C^{1}_b(\R^{d})$;
  \item [(ii)]$0<\underline{\alpha}:=\inf_{x\in\R^{d}}\alpha(x)\leq\sup_{x\in\R^{d}}\alpha(x)=:\overline{\alpha}<2$ and $\inf_{x\in\R^{d}}\gamma(x)>0$.
\end{itemize}
Here, $C_b^{k}(\R^{d})$, $k\geq 0$, denotes the space of $k$ times differentiable functions such that all derivatives up to order $k$ are bounded.
  Then, under this assumptions, in
\cite{Bass-1988}, \cite[Theorem 5.1]{Kolokoltsov-2000} and
                                                         \cite[Theorems 1.1 and 3.3]{Schilling-Wang-2013}
                                                         it has been shown
                                                         that there
                                                         exists a
                                                         unique open-set irreducible L\'evy-type
                                                         process $\process{F}$,
                                                         called a
                                                         \emph{stable-like
                                                         process},
                                                         determined
by a symbol of the form $q(x,\xi)=-i\langle\xi,\beta(x)\rangle+\gamma(x)|\xi|^{\alpha(x)}$ for  $x,\xi\in\R^{d}$.  Note that when
$\alpha(x)$, $\beta(x)$ and $\gamma(x)$ are constant functions, then we deal
with a rotationally invariant  stable L\'evy process with drift.
Again, fix $\kappa>0$. Then,   Proposition \ref{p4.2} implies that
\begin{itemize}
  \item [(i)] if $\sup_{x\in\R^{d}}|\beta(x)|>0$, $\underline{\alpha}<1$ and $d\leq(\kappa+1)\underline{\alpha}$, \eqref{eq4.1} holds true.
  \item [(ii)]if $\sup_{x\in\R^{d}}|\beta(x)|>0$, $\underline{\alpha}\geq1$ and $d\leq(\kappa+1)$, \eqref{eq4.1} holds true.
  \item [(iii)]if $\beta(x)=0$ for all $x\in\R^{d}$  and $d\leq(\kappa+1)\underline{\alpha}$, \eqref{eq4.1} holds true.
  \item [(iv)]if $d>(\kappa+1)\overline{\alpha}$, \eqref{eq4.3} holds true.
\end{itemize}
Let also remark that
 \begin{enumerate}
   \item [(i)] if $d=1$, $\beta(x)=0$ for all $x\in\R^{d}$  and $\underline{\alpha}\geq1$, then $\process{F}$ satisfies the Chung-Fuchs type condition for recurrence given in \cite[Theorem 1.2]{Sandric-TAMS-2014}.
   \item [(ii)] if $d=1$ and $\overline{\alpha}<1$, then $\process{F}$ is transient.
   \item [(iii)] if $d\geq2$, then $\process{F}$ is always transient.
 \end{enumerate}
 (see \cite[Theorem 2.10]{Sandric-TAMS-2014}).
In particular,  a  $d$-dimensional rotationally invariant  $\alpha$-stable L\'evy process is $\kappa$-weakly transient if, and only if, $d/(\kappa+1)\leq\alpha<d$.
}
 \end{example}

 The concept of the indices of stability  can
be generalized to  general L\'evy-type process through the so-called
Pruitt indices (see \cite{Pruitt-1981}). The \emph{Pruitt indices}, for a
L\'evy-type process $\process{F}$ with  symbol $q(x,\xi)$, are
defined in the following way
\begin{align*}\underline{\delta}&:=\sup\left\{\delta\geq0:\lim_{|\xi|\longrightarrow0}\frac{\sup_{x\in\R^{d}}|q(x,\xi)|}{|\xi|^{\delta}}=0\right\}\\
\overline{\delta}&:=\sup\left\{\delta\geq0:\lim_{|\xi|\longrightarrow0}\frac{\inf_{x\in\R^{d}}\rm{Re}\,\it{q}(\it{x},\xi)}{|\xi|^{\delta}}=\rm{0}\right\}\end{align*}
(see \cite{Schilling-PTRF-1998}). Note that $0\leq\underline{\delta}\leq\overline{\delta}$,
$\underline{\delta}\leq2$ and in the case of a  stable-like processes, we have
$\underline{\delta}=\underline{\alpha}$ and $\overline{\delta}=\bar{\alpha}.$  Now, we
 generalize Example
  \ref{e4.5} in terms of the Pruitt indices.
\begin{theorem}\label{tm4.6}Let $\kappa>0$ and let $\process{F}$ be a   L\'evy-type process with  symbol $q(x,\xi)$.
\begin{enumerate}
  \item [(i)] If   $d<(\kappa+1)\underline{\delta}$, then  the condition in \eqref{eq4.1} holds true.
\item [(ii)] If  $q(x,\xi)$
satisfies the condition in  \eqref{eq4.3}, then $d\geq(\kappa+1)\bar{\delta}$.
\end{enumerate}
\end{theorem}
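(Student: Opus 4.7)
The proof of both parts will hinge on the same elementary idea: use the definition of the Pruitt indices to obtain a pointwise power-law bound on the symbol in a neighborhood of the origin, and then reduce the integrals in \eqref{eq4.1} and \eqref{eq4.3} to one-dimensional radial integrals by passing to spherical coordinates. The divergence of these radial integrals at $\rho=0$ is the engine of the argument.

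For part (i), I would use the strict inequality $d<(\kappa+1)\underline{\delta}$ to choose an exponent $\delta$ satisfying $d/(\kappa+1)<\delta<\underline{\delta}$. By the very definition of $\underline{\delta}$, we have $\sup_{x\in\R^{d}}|q(x,\xi)|=o(|\xi|^{\delta})$ as $|\xi|\longrightarrow0$, so there exists $r_{0}>0$ such that $\sup_{x\in\R^{d}}|q(x,\xi)|\leq |\xi|^{\delta}$ for all $|\xi|\leq r_{0}$. Passing to spherical coordinates,
$$\int_{B(0,r_{0})}\frac{d\xi}{\left(\sup_{x\in\R^{d}}|q(x,\xi)|\right)^{\kappa+1}}\geq\int_{B(0,r_{0})}\frac{d\xi}{|\xi|^{\delta(\kappa+1)}}=S_{d}\int_{0}^{r_{0}}\rho^{d-1-\delta(\kappa+1)}d\rho,$$
and since $\delta(\kappa+1)>d$ the exponent satisfies $d-1-\delta(\kappa+1)<-1$, so the integral diverges. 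This gives \eqref{eq4.1}.

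For part (ii), I would argue by the contrapositive: assume $d<(\kappa+1)\overline{\delta}$ and show \eqref{eq4.3} fails. Choose $\delta$ with $d/(\kappa+1)<\delta<\overline{\delta}$; by the definition of $\overline{\delta}$ there is some neighborhood $B(0,r_{0})$ on which $\inf_{x\in\R^{d}}\mathrm{Re}\,q(x,\xi)\leq|\xi|^{\delta}$. An identical radial computation yields
$$\int_{B(0,r_{0})}\frac{d\xi}{\left(\inf_{x\in\R^{d}}\mathrm{Re}\,q(x,\xi)\right)^{\kappa+1}}\geq S_{d}\int_{0}^{r_{0}}\rho^{d-1-\delta(\kappa+1)}d\rho=\infty.$$
Because this divergence comes from the singularity at the origin, the same is true for $\int_{B(0,r)}$ for every $r>0$; hence \eqref{eq4.3} cannot hold for any $r>0$, contradicting our hypothesis.

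I do not anticipate any substantive obstacle: the proof is essentially bookkeeping around the definitions of $\underline{\delta}$ and $\overline{\delta}$. The only subtlety worth flagging is the use of strict inequalities in the hypotheses, which is precisely what allows us to slip an auxiliary $\delta$ between $d/(\kappa+1)$ and the Pruitt index so that both the power bound on the symbol and the divergence of the radial integral hold simultaneously.
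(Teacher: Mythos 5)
Your proposal is correct and follows essentially the same route as the paper: in both parts one slips an auxiliary exponent $\delta$ between $d/(\kappa+1)$ and the relevant Pruitt index, uses the definition of the index to bound the symbol by $|\xi|^{\delta}$ near the origin, and compares the integrals in \eqref{eq4.1} and \eqref{eq4.3} with the divergent radial integral $\int_{0}^{r}\rho^{d-1-\delta(\kappa+1)}d\rho$ (the paper phrasing part (ii) as a proof by contradiction rather than contrapositive, which is the same argument). No gaps.
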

\begin{proof} \begin{enumerate}
                \item [(i)] Let $d/(\kappa+1)\leq\delta<\underline{\delta}$ be arbitrary. Then, by the definition of
$\underline{\delta}$,
$$\lim_{|\xi|\longrightarrow0}\frac{\sup_{x\in\R^{d}}|q(x,\xi)|}{|\xi|^{\delta}}=0.$$
Now, the claim easily follows by employing completely the same arguments as in the proof of  Proposition \ref{p4.2}.
\item[(ii)]
   Let us
assume that this is not the case.  Then, for all
$d/(\kappa+1)\leq\delta<\overline{\delta}$, by the definition of $\overline{\delta}$, we have
$$\lim_{|\xi|\longrightarrow0}\frac{\inf_{x\in\R^{d}}\rm{Re}\,\it{q}(x,\xi)}{|\xi|^{\delta}}=\rm{0}.$$
Consequently,
$$\int_{B(0,r)}\frac{d\xi}{\left(\inf_{x\in\R^{d}}\rm{Re}\,\it{q}(x,\xi)\right)^{\kappa+1}}\geq\int_{B(0,r)}\frac{d\xi}{|\xi|^{(\kappa+1)\delta}}=\infty$$
for all $r>0$ small enough.
\end{enumerate}
\end{proof}
Let us remark that in general it is not possible to obtain the equivalence in Theorem \ref{tm4.6}. To see this, first recall that for two
symmetric measures $\mu(dx)$ and $\bar{\mu}(dx)$ on
$\mathcal{B}(\R)$ which are finite outside of any neighborhood around
the origin, we say that $\mu(dx)$ has a bigger tail than
$\bar{\mu}(dx)$ if there exists $x_0>0$ such that
$\mu(x,\infty)\geq\bar{\mu}(x,\infty)$ for all $x\geq x_0$. Now, fix $\kappa>0$ and
let $\bar{\nu}(dx)$ be the L\'evy
measure of a one-dimensional symmetric   $\alpha$-stable, $\alpha(\kappa+1)<1$,  L\'evy process
$\process{\bar{L}}$. Hence, $\process{\bar{L}}$ is $\kappa$-strongly transient.  Next,
by
\cite[Theorem 38.4]{Sato-Book-1999},  there exists a  one-dimensional
symmetric L\'evy process $\process{L}$ with  L\'evy measure
$\nu(dx)$ having a bigger tail then $\bar{\nu}(dx)$ and satisfying the condition in \eqref{eq4.1}. Moreover, $\process{L}$  can be constructed such that it  is recurrent. Consequently, by
Fubini's theorem, for any $\delta>0$ we have
\begin{align*}\int_{\{x>x_0\}}x^{\delta}\nu(dx)&=\int_{\{x>x_0\}}\int_0^{x}\delta
y^{\delta-1}dy\,\nu(dx)\\&=x_0^{\delta}\nu(x_0,\infty)+\delta\int_{\{y>x_0\}}y^{\delta-1}\nu(y,\infty)dy\\
&\geq
x_0^{\delta}\nu(x_0,\infty)+\delta\int_{\{y>x_0\}}y^{\delta-1}\bar{\nu}(y,\infty)dy\\
&\geq x_0^{\delta}\nu(x_0,\infty)-x_0^{\delta}\bar{\nu}(x_0,\infty)+\int_{\{x>x_0\}}x^{\delta}\bar{\nu}(dx).\end{align*}
Hence, if $\int_{\{x>1\}}x^{\delta}\bar{\nu}(dy)=\infty$, then
$\int_{\{x>1\}}x^{\delta}\nu(dy)=\infty.$
Specially, according to \cite[Prposition 5.4]{Schilling-PTRF-1998}, $\underline{\delta}_{L}\leq\underline{\delta}_{\bar{L}}=\alpha$.
To see that $d\geq\overline{\delta}(\kappa+1)$ does not automatically imply \eqref{eq4.3}, we proceed in a similar way.
 Fix $\kappa>0$ and
let $\bar{\nu}(dx)$ be the L\'evy
measure of a  one-dimensional symmetric $\alpha$-stable, $\alpha(\kappa+1)<1$,  L\'evy process
$\process{\bar{L}}$.   Again,
by
\cite[Theorem 38.4]{Sato-Book-1999},  there exists a  one-dimensional symmetric
 L\'evy process $\process{L}$ with  L\'evy measure
$\nu(dx)$ having a bigger tail then $\bar{\nu}(dx)$ and satisfying the condition in \eqref{eq4.1}. But, by the same reasoning as above,
we conclude that $\overline{\delta}_{L}\leq\overline{\delta}_{\bar{L}}=\alpha$.
However, by assuming certain regularities (convexity and
concavity) of the  symbol, we get the converse statements in Theorem
\ref{tm4.6}.

 \begin{theorem}\label{tm4.7}Let $\kappa>0$ and let $\process{F}$ be a  L\'evy-type  process with  symbol
 $q(x,\xi)$.
\begin{enumerate}
\item [(i)]If  $\kappa+1\geq d$ and the functions $\xi\longmapsto\sup_{x\in\R^{d}}|\it{q}(\it{x},\xi)|$ and $|\xi|\longmapsto\sup_{x\in\R^{d}}|\it{q}(\it{x},\xi)|$ are radial and convex, respectively, on some neighborhood of the origin,  then $\underline{\delta}(\kappa+1)\geq d$.
\item[(ii)]If  $\kappa+1\leq d$ and  the functions $\xi\longmapsto\sup_{x\in\R^{d}}|\it{q}(\it{x},\xi)|$ and $|\xi|\longmapsto\sup_{x\in\R^{d}}|\it{q}(\it{x},\xi)|$ are radial and concave, respectively, on some neighborhood of the origin,  then $\underline{\delta}(\kappa+1)\leq d$. In addition, if  the condition in \eqref{eq4.1} holds true, then $d=\kappa+1.$
\item [(iii)]If $\kappa+1\geq d$ and the functions
$\xi\longmapsto\inf_{x\in\R^{d}}\rm{Re}\,\it{p}(\it{x},\xi)$ and $|\xi|\longmapsto\inf_{x\in\R^{d}}\rm{Re}\,\it{p}(\it{x},\xi)$  are
radial and convex, respectively, on some neighborhood of the origin, then $\overline{\delta}(\kappa+1)\geq d$.  In addition, if  the condition in \eqref{eq4.3} holds true, then $d=\kappa+1.$
\item[(iv)]If $\kappa+1\leq d$ and the functions $\xi\longmapsto\inf_{x\in\R^{d}}\rm{Re}\,\it{p}(\it{x},\xi)$ and $|\xi|\longmapsto\inf_{x\in\R^{d}}\rm{Re}\,\it{p}(\it{x},\xi)$ are radial
and concave, respectively,  on some neighborhood of the origin, then $\overline{\delta}(\kappa+1)\leq d$.  In addition, if $\kappa+1<d$, then the condition in \eqref{eq4.3} holds true.
\end{enumerate}
 \end{theorem}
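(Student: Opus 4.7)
The plan is to unify the four parts through a single observation: write the relevant radial function as $\tilde{g}(|\xi|)$, where $\tilde{g}(|\xi|)=\sup_{x\in\R^{d}}|q(x,\xi)|$ in (i)-(ii) and $\tilde{g}(|\xi|)=\inf_{x\in\R^{d}}\mathrm{Re}\,q(x,\xi)$ in (iii)-(iv). By (\textbf{C3}) we have $\tilde{g}(0)=0$. The two-point inequality $\tilde{g}(\lambda\rho_{0})\leq\lambda\tilde{g}(\rho_{0})$, which follows from convexity on $[0,\rho_{0}]$ with $\lambda=\rho/\rho_{0}\in[0,1]$ together with $\tilde{g}(0)=0$, yields at once the linear upper bound $\tilde{g}(\rho)\leq c\rho$ on $[0,\rho_{0}]$ with $c=\tilde{g}(\rho_{0})/\rho_{0}$; the analogous concavity argument yields a linear lower bound $\tilde{g}(\rho)\geq c\rho$ on $[0,\rho_{0}]$. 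These are essentially the only analytic facts the proof needs.

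From these one-sided linear bounds I would read off the Pruitt-index inequalities directly from the definitions. Under convexity, $\tilde{g}(\rho)/\rho^{\delta}\leq c\rho^{1-\delta}\to 0$ for every $\delta<1$, so the associated Pruitt index ($\underline{\delta}$ in (i), $\overline{\delta}$ in (iii)) is at least $1$. Under concavity, $\tilde{g}(\rho)/\rho^{\delta}\geq c\rho^{1-\delta}\to\infty$ for every $\delta>1$, so the associated index is at most $1$. Multiplying by $\kappa+1$ and combining with the hypothesized ordering between $\kappa+1$ and $d$ delivers each of the four headline inequalities: for instance, in (i), $\underline{\delta}(\kappa+1)\geq\kappa+1\geq d$, with the cases (ii), (iii), (iv) handled by the same arithmetic applied to the opposite inequalities.

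For the supplementary assertions I would pass to polar coordinates to rewrite
\[\int_{B(0,r)}\frac{d\xi}{\tilde{g}(|\xi|)^{\kappa+1}}=S_{d}\int_{0}^{r}\frac{\rho^{d-1}}{\tilde{g}(\rho)^{\kappa+1}}\,d\rho,\]
where $S_{d}$ denotes the surface area of the unit sphere, and then feed in the linear bounds. In (ii), the concave lower bound $\tilde{g}(\rho)\geq c\rho$ bounds the integrand above by $c^{-(\kappa+1)}\rho^{d-1-(\kappa+1)}$, which is integrable near $0$ precisely when $d>\kappa+1$; hence the divergence required by \eqref{eq4.1} forces $d\leq \kappa+1$, and combined with the hypothesis $\kappa+1\leq d$ gives $d=\kappa+1$. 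In (iv), the same upper bound on the integrand is already integrable when $\kappa+1<d$, so \eqref{eq4.3} follows directly. In (iii), the convex upper bound $\tilde{g}(\rho)\leq c\rho$ bounds the integrand below by $c^{-(\kappa+1)}\rho^{d-1-(\kappa+1)}$; compatibility of this lower bound with the finiteness in \eqref{eq4.3} forces $d>\kappa+1$, which paired with the hypothesis $\kappa+1\geq d$ leaves only the boundary scenario $d=\kappa+1$.

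The main delicate point, as I see it, is simply to ensure that the one-sided linear bound on $\tilde{g}$ genuinely follows from the convexity or concavity assumption stated only on \emph{some} neighborhood of the origin, and without any a priori regularity of $\tilde{g}$ at $0$; the two-point inequality above bypasses this concern and produces an explicit constant. Once that is in place, everything else reduces to elementary dimension counting in a one-dimensional radial integral.
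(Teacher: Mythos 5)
Your proposal follows essentially the same route as the paper: the two-point convexity/concavity inequality at the origin (using $q(x,0)=0$) to obtain a one-sided linear bound on the radial profile, hence $\underline{\delta}\geq 1$ or $\underline{\delta}\leq 1$ (respectively $\overline{\delta}$), followed by the polar-coordinate dimension count of Proposition \ref{p4.2} for the supplementary claims; this is exactly the paper's argument for (i) and (ii), and the paper treats (iii) and (iv) as analogous.

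One imprecision in your treatment of (iii): your own estimate shows that, under convexity, the integrand in \eqref{eq4.3} is bounded below by a constant times $\rho^{d-1-(\kappa+1)}$, whose integral near $0$ diverges for every $d\leq\kappa+1$, including $d=\kappa+1$; so \eqref{eq4.3} forces the \emph{strict} inequality $d>\kappa+1$, which is incompatible with the hypothesis $\kappa+1\geq d$ rather than ``leaving only the boundary scenario.'' The stated implication is thus (vacuously) true, but your phrasing suggests the two constraints meet at $d=\kappa+1$, which they do not; to reach $d=\kappa+1$ non-vacuously one should instead combine $\overline{\delta}\geq1$ with the non-strict bound $d\geq(\kappa+1)\overline{\delta}$ from Theorem \ref{tm4.6}(ii). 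The remaining parts, including the supplementary claims in (ii) and (iv), are correct as written.
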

\begin{proof}
\begin{enumerate}
\item [(i)]
 Let $\varepsilon>0$ be such that
$|\xi|\longmapsto\sup_{x\in\R^{d}}|{q}(\it{x},\xi)|$ is convex on
$[0,\varepsilon)$. Then, for  all $\delta<1$,
\begin{align*}\limsup_{|\xi|\longrightarrow0}\frac{\sup_{x\in\R^{d}}
                |\it{q(x,\xi)}|}{|\xi|^{\delta}}&=\limsup_{|\xi|\longrightarrow0}\frac{\sup_{x\in\R^{d}}
                \left|\it{q\left(x,\frac{\rm{2}|\xi|}{\varepsilon}\frac{\varepsilon\xi}{\rm{2}|\xi|}\right)}\right|}{|\xi|^{\delta}}\\&\leq \limsup_{|\xi|\longrightarrow0}\frac{2|\xi|^{1-\delta}}{\varepsilon}\sup_{x\in\R^{d}}
               \left| \it{q\left(x,\frac{\varepsilon\xi}{\rm{2}|\xi|}\right)}\right|={\rm0},\end{align*}
where in the second step we used the convexity property. Thus, $\underline{\delta}\geq1\geq d/(\kappa+1)$.
\item[(ii)]
Let $\varepsilon>0$ be such that
$|\xi|\longmapsto\sup_{x\in\R^{d}}|{q}(\it{x},\xi)|$ is concave on
$[0,\varepsilon)$. Then, for  all $\delta\geq1$,
\begin{align*}\liminf_{|\xi|\longrightarrow0}\frac{\sup_{x\in\R^{d}}
                \it{q(x,\xi)}}{|\xi|^{\delta}}&=\liminf_{|\xi|\longrightarrow0}\frac{\sup_{x\in\R^{d}}
               \left|\it{q\left(x,\frac{\rm{2}|\xi|}{\varepsilon}\frac{\varepsilon\xi}{\rm{2}|\xi|}\right)}\right|}{|\xi|^{\delta}}\\&\geq \liminf_{|\xi|\longrightarrow0}\frac{2|\xi|^{1-\delta}}{\varepsilon}\sup_{x\in\R^{d}}
                \left|\it{q\left(x,\frac{\varepsilon\xi}{\rm{2}|\xi|}\right)}\right|>{\rm0},\end{align*}
where in the second step we applied the concavity property.
Hence, $\underline{\delta}\leq1\leq d/(1+\kappa)$. The second assertion easily follows from the above relation and the same arguments employed in the proof of Proposition \ref{p4.2}.

 \item [(iii)] The proof proceeds similarly as in  (i).
\item[(iv)]  The proof proceeds similarly as in  (ii).
\end{enumerate}
\end{proof}

\section{Algebraic Weak and Strong Transience of Rotationally Invariant  L\'evy-Type Processes}\label{s4}
In this section, we discuss   the $\kappa$-weak and $\kappa$-strong transience of a  class of  L\'evy-type processes whose symbol is radial in the co-variable. In particular, if $(b(x),C(x),\nu(x,dy))$ denotes the corresponding L\'evy triplet,  \cite[Exercise 18.3]{Sato-Book-1999} implies that this property is equivalent to
\begin{itemize}
  \item [(i)]$b(x)=0$ for all $x\in\R^{d}$;
  \item [(ii)]$C(x)=c(x)I$ for some Borel measurable function $c:\R^{d}\longrightarrow[0,\infty)$, where $I$ is the $d\times d$ identity matrix;
  \item [(iii)] $\nu(x,dy)=\nu(x,O dy)$ for all $x\in\R^{d}$ and all orthogonal $d\times d$ matrices $O$.
\end{itemize}
Specially, due to Proposition \ref{p3.8}, these conditions entail that the condition in \eqref{eq4.1} does not depend on $r>0$. By following the proof of \cite[Theorem 3.2]{Sandric-SPA-2015} we get the following ``perturbation" result.

\begin{theorem}\label{tm5.1} Let $\kappa>0$ and let $\process{F}$ and $\process{\bar{F}}$ be
$d$-dimensional  L\'evy-type processes  with  symbols $q(x,\xi)$ and $\bar{q}(x,\xi)$ and  L\'evy triplets
$(0,c(x)I,\nu(x,dy))$ and $(0,\bar{c}(x)I,\bar{\nu}(x,dy))$, respectively.
If there exists
orthogonal matrix $O$ such that
\begin{equation}\label{eq5.1}\sup_{x\in\R^{d}}\int_{\R^{d}}
|y|^{2}|\nu(x,dy)-\bar{\nu}(Ox,dy)|<\infty,\end{equation}  then $q(x,\xi)$ satisfies  \eqref{eq4.1} if, and only if,  $\bar{q}(x,\xi)$ satisfies \eqref{eq4.1}. Here,
$|\mu(dy)|$
denotes the
total variation measure of the signed measure $\mu(dy)$.
If
\begin{equation}\label{eq5.2}\liminf_{|\xi|\longrightarrow0}\frac{\inf_{x\in \R^{d}}q(x,\xi)}{|\xi|^{2}}
>\frac{1}{2}\sup_{x\in\R^{d}}|c(x)-\bar{ c}(O x)|+\sup_{x\in\R^{d}}\int_{\R^{d}}
|y|^{2}|\nu(x,dy)-\bar{\nu}(O x,dy)|,\end{equation}then, under \eqref{eq5.1},
$q(x,\xi)$ satisfies  \eqref{eq4.3} if, and only if,  $q(x,\xi)$ satisfies \eqref{eq4.3}.
\end{theorem}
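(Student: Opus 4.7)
The radial hypothesis on both triplets implies that $q$ and $\bar q$ are real-valued and nonnegative with the explicit representations
$$q(x,\xi) = \tfrac{1}{2}c(x)|\xi|^2 + \int_{\R^d}(1-\cos\langle\xi,y\rangle)\,\nu(x,dy),$$
and analogously for $\bar q$. My first step is to derive a pointwise estimate for the perturbation. From $1-\cos u \leq u^2$ and the Cauchy-Schwarz inequality $|\langle\xi,y\rangle|^2 \leq |\xi|^2|y|^2$, I would obtain
$$|q(x,\xi) - \bar q(Ox,\xi)| \leq \tfrac{1}{2}|c(x) - \bar c(Ox)||\xi|^2 + |\xi|^2 \int_{\R^d}|y|^2\,|\nu(x,dy) - \bar\nu(Ox,dy)|.$$
Since $O$ is a bijection on $\R^d$, we have $\sup_x|\bar q(Ox,\cdot)| = \sup_x|\bar q(x,\cdot)|$ and similarly for the infimum. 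Combining this with \eqref{eq5.1} and the boundedness of $c,\bar c$ inherited from (C2), I get the two global inequalities
$$\bigl|\sup_x|q(x,\xi)| - \sup_x|\bar q(x,\xi)|\bigr| \leq K|\xi|^2, \qquad \bigl|\inf_x\mathrm{Re}\,q(x,\xi) - \inf_x\mathrm{Re}\,\bar q(x,\xi)\bigr| \leq K|\xi|^2,$$
where $K := \tfrac{1}{2}\sup_x|c(x)-\bar c(Ox)| + \sup_x\int|y|^2|\nu(x,dy)-\bar\nu(Ox,dy)|$ is finite.

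For the first equivalence, write $Q := \sup_x|q(x,\cdot)|$ and $\bar Q := \sup_x|\bar q(x,\cdot)|$. By Proposition \ref{p3.8} the value of $r$ in \eqref{eq4.1} is irrelevant, so I may work on an arbitrarily small ball $B(0,r)$ around the origin. My plan is to split $B(0,r) = A \cup A^c$ with $A := \{\xi : Q(\xi) \geq 2K|\xi|^2\}$. On $A$ the pointwise estimate yields $Q/2 \leq \bar Q \leq 3Q/2$, so $1/\bar Q^{\kappa+1}$ and $1/Q^{\kappa+1}$ are comparable up to the factor $(3/2)^{\kappa+1}$, and the integrals over $A$ converge or diverge together. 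On $A^c$ both $Q$ and $\bar Q$ are dominated from above by a constant multiple of $|\xi|^2$; combined with the nontriviality of the process, which under the radial structure guarantees $Q(\xi) \geq c_0|\xi|^2$ near the origin for some $c_0 > 0$, the integrals over $A^c$ are equivalent to $\int_{A^c}d\xi/|\xi|^{2(\kappa+1)}$, a quantity intrinsic to dimension.

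For the second equivalence, assumption \eqref{eq5.2} is exactly $\liminf_{|\xi|\to 0}\inf_x q(x,\xi)/|\xi|^2 > K$. This provides $\delta > 0$ with $\inf_x q(x,\xi) \geq (K+\delta)|\xi|^2$ for all $\xi$ in a punctured neighborhood of the origin. The pointwise estimate then forces $\inf_x\bar q(x,\xi) \geq \delta|\xi|^2$ on the same neighborhood, and moreover $\inf_x q$ and $\inf_x \bar q$ are mutually bounded by constant multiples of each other there. Both \eqref{eq4.3}-type integrals on a sufficiently small $B(0,r)$ then reduce to constant multiples of $\int_{B(0,r)}d\xi/|\xi|^{2(\kappa+1)}$, so they converge simultaneously. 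The main technical obstacle is in the first claim, in justifying the lower bound $Q(\xi) \geq c_0|\xi|^2$ on $A^c$: this requires exploiting the rotation invariance of $\nu(x,dy)$ (condition (iii) at the start of Section \ref{s4}) to show that, at some base point $x_0$, the jump part contributes at least a positive multiple of $|\xi|^2$ regardless of direction, following the approach of \cite[Theorem 3.2]{Sandric-SPA-2015}.
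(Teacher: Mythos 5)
Your argument is sound and is essentially the argument the paper relies on: the paper offers no written proof of Theorem \ref{tm5.1} at all, only the remark that it follows by adapting the proof of \cite[Theorem 3.2]{Sandric-SPA-2015}, and that adaptation rests on precisely your pointwise estimate $|q(x,\xi)-\bar q(Ox,\xi)|\le K|\xi|^{2}$ (via $1-\cos u\le u^{2}$, Cauchy--Schwarz and the bijectivity of $O$) followed by a comparison of the Chung--Fuchs integrals near the origin. Two small touch-ups. First, in the second equivalence the closing claim that both integrals in \eqref{eq4.3} ``reduce to constant multiples of $\int|\xi|^{-2(\kappa+1)}d\xi$'' is an overstatement: \eqref{eq5.2} gives only the one-sided bound $\inf_{x}q(x,\xi)\ge(K+\delta)|\xi|^{2}$ near $0$, not an upper bound of the same order; but the two-sided comparability $\tfrac{\delta}{K+\delta}\inf_{x}q(x,\xi)\le\inf_{x}\bar q(x,\xi)\le\tfrac{2K+\delta}{K+\delta}\inf_{x}q(x,\xi)$, which you do establish in the preceding sentence, is all that is needed, so nothing is lost. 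Second, the lower bound $\sup_{x}|q(x,\xi)|\ge c_{0}|\xi|^{2}$ near the origin that you flag as the technical crux — and whose analogue for $\bar q$ is equally required in the symmetric direction of the first equivalence — does hold under the standing assumption (\textbf{C4}) (which rules out a degenerate symbol): pick $x_{0}$ with $c(x_{0})>0$ or $\nu(x_{0},\cdot)\neq0$ and use $1-\cos u\ge u^{2}/\pi$ for $|u|\le\pi/2$ together with the rotational-invariance identity $\int_{B}\langle\xi,y\rangle^{2}\nu(x_{0},dy)=\tfrac{|\xi|^{2}}{d}\int_{B}|y|^{2}\nu(x_{0},dy)$, exactly the computation the paper carries out in Proposition \ref{p5.6}. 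With those two points filled in, your proof is complete and matches the intended one.
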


Let us remark here that if $\nu(x,dy)$ is the L\'evy measure of a L\'evy-type process $\process{F}$, then $\nu(Ox,dy)=\nu(Ox,Ody)$ is a L\'evy measure of the L\'evy-type process $\process{O^{-1}F}$ (see \cite[Proposition 3.1]{Sandric-SPA-2015}). Further, observe that in the L\'evy process case the condition in \eqref{eq5.2} will be satisfied if, and only if, $\int_{\R^{d}}|y|^{2}\nu(dy)=\infty.$
A situation where the condition in \eqref{eq5.1} trivially holds true is given in
the following proposition.
\begin{proposition}\label{p5.2}
 Let $\process{F}$ and $\process{\bar{F}}$ be
$d$-dimensional Le\'vy-type processes with  L\'evy
measures $\nu(x,dy)$ and $\bar{\nu}(x,dy)$, respectively. If there
exist an orthogonal matrix $O$ and $r>0$,  such that $\nu(x,B)=\bar{\nu}(Ox,B)$  for all $x\in\R^{d}$ and all $B\in\mathcal{B}(\R^{d})$, $B\subseteq B^{c}(0,r)$, then  the condition in
\eqref{eq5.1} holds true.
\end{proposition}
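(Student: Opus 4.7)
The plan is to exploit the hypothesis to confine the total variation measure $|\mu_x|$, where $\mu_x:=\nu(x,\cdot)-\bar{\nu}(Ox,\cdot)$, to the bounded set $B(0,r)$; once that is done, the $|y|^{2}$-weighted integral is essentially an integral of a bounded function against a finite measure, and the bounded-coefficients property of L\'evy-type processes, which is a consequence of condition (\textbf{C2}), will finish the job.

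More explicitly, I would first note that $\nu(x,B)=\bar{\nu}(Ox,B)$ for every Borel $B\subseteq B^{c}(0,r)$ implies $|\mu_x|(B^{c}(0,r))=0$, so that the entire mass of $|\mu_x|$ sits in $B(0,r)$. The Hahn--Jordan decomposition then yields the pointwise estimate $|\mu_x|\leq\nu(x,\cdot)+\bar{\nu}(Ox,\cdot)$, whence
$$\int_{\R^{d}}|y|^{2}\,|\mu_x|(dy)=\int_{B(0,r)}|y|^{2}\,|\mu_x|(dy)\leq\int_{B(0,r)}|y|^{2}\,\nu(x,dy)+\int_{B(0,r)}|y|^{2}\,\bar{\nu}(Ox,dy).$$
On $B(0,r)$ one has the crude inequality $|y|^{2}\leq\max\{1,r^{2}\}\min\{1,|y|^{2}\}$, so each right-hand side integral is bounded by $\max\{1,r^{2}\}\int_{\R^{d}}\min\{1,|y|^{2}\}\nu(x,dy)$ and the analogous expression for $\bar{\nu}(Ox,\cdot)$.

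To conclude I would invoke \cite[Lemma 2.1]{Schilling-PTRF-1998}, by which (\textbf{C2}) is equivalent to bounded coefficients of the symbol; in particular,
$$\sup_{x\in\R^{d}}\int_{\R^{d}}\min\{1,|y|^{2}\}\nu(x,dy)<\infty\quad\text{and}\quad\sup_{x\in\R^{d}}\int_{\R^{d}}\min\{1,|y|^{2}\}\bar{\nu}(x,dy)<\infty,$$
the second supremum being insensitive to the substitution $x\mapsto Ox$. Taking the supremum over $x$ in the preceding displayed inequality then yields \eqref{eq5.1}. There is really no obstacle here: the hypothesis trivialises the support problem for $|\mu_x|$, and the remaining uniform estimate is exactly the standard uniform integrability of L\'evy kernels coming from the bounded-coefficients characterisation of (\textbf{C2}).
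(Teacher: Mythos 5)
Your argument is correct and is essentially the (omitted) proof the paper intends: Proposition \ref{p5.2} is stated without proof, and your observation that the hypothesis forces the total variation of $\nu(x,\cdot)-\bar{\nu}(Ox,\cdot)$ to be carried by $B(0,r)$, where $|y|^{2}\leq\max\{1,r^{2}\}\min\{1,|y|^{2}\}$, combined with the uniform bound $\sup_{x\in\R^{d}}\int_{\R^{d}}\min\{1,|y|^{2}\}\nu(x,dy)<\infty$ coming from (\textbf{C2}) via \cite[Lemma 2.1]{Schilling-PTRF-1998}, is exactly the routine verification being left to the reader. The only point worth a word is that $\nu(x,\cdot)-\bar{\nu}(Ox,\cdot)$ need not be a finite signed measure (both L\'evy measures may have infinite mass near the origin), so the Hahn--Jordan bound $|\mu_x|\leq\nu(x,\cdot)+\bar{\nu}(Ox,\cdot)$ should be read via Radon--Nikodym densities with respect to $\nu(x,\cdot)+\bar{\nu}(Ox,\cdot)$; with that reading your estimates go through unchanged.
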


Note that Proposition \ref{p5.2} implies that the $\kappa$-weak and $\kappa$-strong transience of L\'evy-type processes, satisfying the conditions from Theorem \ref{tm5.1}, depend only on big jumps, that is, they do not depend on the continuous part of the process and small jumps.

In the sequel, we derive some conditions for the $\kappa$-weak and $\kappa$-strong transience in terms of the L\'evy measures.
 By a straightforward adaptation of \cite[Theorem 4.4]{Sandric-SPA-2015}, we conclude the following.

\begin{theorem}\label{tm5.3} Let $\kappa>0$ and let $\process{F}$  be a
 L\'evy-type process with   L\'evy measure $\nu(x,dy)$.
 Then,
\begin{equation}\label{eq5.5}\int_r^{\infty}\frac{\rho^{2\kappa-d+1}}{\left(\sup_{x\in\R^{d}}\int_0^{\rho}u\, \nu(x,B^{c}(0,u))du\right)^{\kappa+1}}d\rho=\infty\quad\textrm{for some (all)}\ r>0\end{equation}
implies \eqref{eq4.1}, and \eqref{eq4.3} implies
\begin{equation}\label{eq5.6}\int_r^{\infty}\frac{\rho^{2\kappa-d+1}}{\left(\inf_{x\in\R^{d}}\int_0^{\rho}u\,\nu(x,B^{c}(0,u))du\right)^{\kappa+1}}d\rho<\infty\quad \textrm{for some}\ r>0.\end{equation}
In addition,  if the condition in \eqref{eq5.2} holds and there exists $u_0\geq0$ such that $\nu(x,dy)=n(x,|y|)dy$ on $\mathcal{B}(B^{c}(0,u_0))$ for some Borel function $n:\R^{d}\times(0,\infty)\longrightarrow(0,\infty)$ which is decreasing on $(u_0,\infty)$ for all $x\in\R^{d}$,
 then
\eqref{eq4.1} holds true if, and only if, \eqref{eq5.5} holds true and \eqref{eq4.3} holds true if, and only if, \eqref{eq5.6} holds true.
\end{theorem}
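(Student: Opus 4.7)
My plan is to combine a Pruitt-type two-sided estimate on $q(x,\xi)$ in terms of tail integrals of the radial L\'evy measure $\nu(x,\cdot)$ with a polar decomposition of the integrals in \eqref{eq4.1} and \eqref{eq4.3}, following the strategy of \cite[Theorem 4.4]{Sandric-SPA-2015}. The hypotheses of this section force $b(x)=0$, $C(x)=c(x)I$ and $\nu(x,\cdot)$ rotation invariant, so the symbol reduces to the real, $\xi$-radial expression
\begin{equation*}
q(x,\xi)=\frac{c(x)}{2}|\xi|^{2}+\int_{\R^{d}}(1-\cos\langle\xi,y\rangle)\,\nu(x,dy),
\end{equation*}
and therefore, by Proposition \ref{p3.8}, both $\xi\longmapsto\sup_{x}|q(x,\xi)|$ and $\xi\longmapsto\inf_{x}{\rm Re}\,q(x,\xi)$ are radial. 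Passing to polar coordinates, the balls in \eqref{eq4.1} and \eqref{eq4.3} reduce to one-dimensional integrals on $(0,r)$ against the Jacobian $S_{d-1}\rho^{d-1}$.

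The central analytic step is the two-sided comparison
\begin{equation*}
\int_{\R^{d}}(1-\cos\langle\xi,y\rangle)\,\nu(x,dy)\;\asymp_{d}\;|\xi|^{2}\int_{0}^{1/|\xi|}u\,\nu(x,B^{c}(0,u))\,du.
\end{equation*}
For the upper bound I would use $1-\cos u\le\min(u^{2},2)$ together with the integration-by-parts identity
\begin{equation*}
\int_{0}^{\rho}2u\,\nu(x,B^{c}(0,u))\,du=\int_{\{|y|\le\rho\}}|y|^{2}\,\nu(x,dy)+\rho^{2}\,\nu(x,B^{c}(0,\rho)),
\end{equation*}
so that $|q(x,\xi)|$ is dominated by a fixed multiple of the right-hand side of the comparison. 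For the lower bound I would exploit $1-\cos u\ge u^{2}/\pi$ on $|u|\le\pi/2$ together with the radial identity $\int_{\{|y|\le R\}}\langle\xi,y\rangle^{2}\,\nu(x,dy)=(|\xi|^{2}/d)\int_{\{|y|\le R\}}|y|^{2}\,\nu(x,dy)$ taken at $R=\pi/(2|\xi|)$. The Gaussian term $c(x)|\xi|^{2}/2$ is absorbed into the upper bound for free, and in the lower bound it only helps. After substituting $\rho=1/s$, $d\rho=-s^{-2}ds$, the radial integral $\int_{0}^{r}\rho^{d-1}g(\rho)^{-\kappa-1}\,d\rho$ becomes $\int_{1/r}^{\infty}s^{2\kappa+1-d}\,H(s)^{-\kappa-1}\,ds$, where $H(s)=\sup_{x}\int_{0}^{s}u\,\nu(x,B^{c}(0,u))\,du$ (respectively the $\inf$), which is exactly the integrand in \eqref{eq5.5} (respectively \eqref{eq5.6}); this yields the two one-sided implications.

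The main obstacle is upgrading these implications to equivalences under the additional density/monotonicity hypothesis. The difficulty is that the upper Pruitt bound on ${\rm Re}\,q(x,\xi)$ combines the truncated second moment with a \emph{separate} pure tail term $\rho^{2}\nu(x,B^{c}(0,\rho))$, while the elementary lower bound captures only the former. The monotonicity of $n(x,\cdot)$ on $(u_{0},\infty)$ forces the radial density to decay slowly enough that the pure tail contribution is controlled by a constant multiple of the truncated second moment, making the Pruitt sandwich tight; the assumption \eqref{eq5.2} simultaneously rules out the regime in which the continuous component could dominate at small $|\xi|$, so that ${\rm Re}\,q(x,\xi)$ itself is sandwiched by constant multiples of $|\xi|^{2}\int_{0}^{1/|\xi|}u\,\nu(x,B^{c}(0,u))\,du$ uniformly in $x$. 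Reversing the chain of comparisons in the previous paragraph then produces the converse implications, yielding the stated equivalences.
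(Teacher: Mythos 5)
The paper itself offers no written proof of Theorem \ref{tm5.3} beyond a pointer to \cite[Theorem 4.4]{Sandric-SPA-2015}, and your overall strategy --- the integration-by-parts identity $\int_0^{\rho}u\,\nu(x,B^{c}(0,u))du=\tfrac{\rho^{2}}{2}\nu(x,B^{c}(0,\rho))+\tfrac12\int_{B(0,\rho)}|y|^{2}\nu(x,dy)$, the elementary bounds on $1-\cos$, and the polar substitution $\rho=1/|\xi|$ --- is indeed the intended adaptation. Your upper bound $|q(x,\xi)|\leq \tfrac{c(x)}{2}|\xi|^{2}+4|\xi|^{2}\int_0^{1/|\xi|}u\,\nu(x,B^{c}(0,u))du$ is correct and yields the two one-sided implications of the first sentence (modulo the routine observation that $\rho\longmapsto\int_0^{\rho}u\,\nu(x,B^{c}(0,u))du$ is increasing and doubling, which absorbs the Gaussian term and the harmless constant in the truncation radius).

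The gap is in the equivalences. Your proposed mechanism --- that monotonicity of $n(x,\cdot)$ forces $\rho^{2}\nu(x,B^{c}(0,\rho))\leq C\int_{B(0,\rho)}|y|^{2}\nu(x,dy)$, so that the elementary lower bound $q(x,\xi)\gtrsim|\xi|^{2}\int_{B(0,c/|\xi|)}|y|^{2}\nu(x,dy)$ already captures the whole Pruitt expression --- is false. Take $n(x,u)=u^{-d}(\log u)^{-2}$ for $u\geq e$: this is decreasing, yet $\rho^{2}\nu(B^{c}(0,\rho))\sim S_d\rho^{2}(\log\rho)^{-1}$ while $\int_{B(0,\rho)}|y|^{2}\nu(dy)\sim \tfrac{S_d}{2}\rho^{2}(\log\rho)^{-2}$, so the tail term exceeds the truncated second moment by an unbounded factor. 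The decreasing-density hypothesis is needed for a different purpose: it is what makes the \emph{tail} part of the lower bound work. For a rotationally invariant measure with radial density decreasing on $(u_0,\infty)$ one shows, by averaging $1-\cos\langle\xi,y\rangle$ over spheres (for $d\geq2$) or over periods of the cosine against the monotone density (for $d=1$), that
\begin{equation*}
\int_{\{|y|\geq c/|\xi|\}}\bigl(1-\cos\langle\xi,y\rangle\bigr)\,\nu(x,dy)\;\geq\;c'\,\nu\bigl(x,B^{c}(0,c/|\xi|)\bigr),
\end{equation*}
which, added to your truncated-second-moment bound, recovers the full expression $|\xi|^{2}\int_0^{c/|\xi|}u\,\nu(x,B^{c}(0,u))du$ from below; without absolute continuity and monotonicity this tail bound genuinely fails (e.g.\ for symmetric measures concentrated where $\cos\langle\xi,y\rangle=1$), which is exactly why the theorem imposes that hypothesis. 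Your reading of \eqref{eq5.2} as the condition that prevents the diffusion coefficient from dominating at small $|\xi|$ is correct (compare Proposition \ref{p5.6}), but as written the converse implications rest on a comparability claim that does not hold, so this step needs to be replaced by the tail estimate above (this is the content of the cited \cite[Theorem 4.4]{Sandric-SPA-2015}).
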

Let us remark that in general  we cannot conclude the equivalence in Theorem \ref{tm5.3} (see the discussion after Theorem \ref{tm4.6}).
 By employing the integration by parts formula and Fatou's lemma, for any $x\in\R^{d}$, any $\rho>0$ and any $0<\varepsilon<\rho$, we have
\begin{align*}&\frac{\rho^{2}}{2}\nu(x,B^{c}(0,\rho))+\frac{1}{2}\int_{B(0,\rho)}|y|^{2}\,\nu(x,dy)\\
&\geq\liminf_{\epsilon\longrightarrow0}\left(\frac{\rho^{2}}{2}\nu(x,B^{c}(0,\rho))-\frac{\epsilon^{2}}{2}\nu(x,B^{c}(0,\epsilon))+\frac{1}{2}\int_{B(0,\rho)\cap´B^{c}(0,\epsilon)}|y|^{2}\,\nu(x,dy)\right)\\
&=\liminf_{\epsilon\longrightarrow0}\int_\epsilon^{\rho}u\,\nu(x,B^{c}(0,u))du\\
&\geq\int_0^{\rho}u\,\nu(x,B^{c}(0,u))du\\&=\int_0^{\varepsilon}u\,\nu(x,B^{c}(0,u))du+\int_{\varepsilon}^{\rho}u\,\nu(x,B^{c}(0,u))du\\
&\geq\frac{\varepsilon^{2}}{2}\nu(x,B^{c}(0,\varepsilon))+\frac{\rho^{2}}{2}\nu(x,B^{c}(0,\rho))-\frac{\varepsilon^{2}}{2}\nu(x,B^{c}(0,\varepsilon))+\frac{1}{2}\int_{B(0,\rho)\cap´B^{c}(0,\varepsilon)}|y|^{2}\,\nu(x,dy)\\
&=\frac{\rho^{2}}{2}\nu(x,B^{c}(0,\rho))+\frac{1}{2}\int_{B(0,\rho)\cap´B^{c}(0,\varepsilon)}|y|^{2}\,\nu(x,dy).\end{align*} Now, by letting $\varepsilon\longrightarrow0$, Fatou's lemma yields $$\int_0^{\rho}u\,\nu(x,B^{c}_u(0))du=\frac{\rho^{2}}{2}\nu(x,B^{c}_\rho(0))+\frac{1}{2}\int_{B_\rho(0)}|y|^{2}\,\nu(x,dy).$$
Thus, we have proved the following.

\begin{proposition}\label{p5.4}Let $\kappa>0$ and let $\process{F}$  be a
 L\'evy-type process with   L\'evy measure $\nu(x,dy)$.
Then,
\eqref{eq5.5} holds  if
\begin{equation}\label{eq5.10}\int_{r}^{\infty}\frac{\rho^{2\kappa-d+1}}{\left(\rho^{2}\sup_{x\in\R^{d}}\nu(x,B^{c}(0,\rho))+\sup_{x\in\R^{d}}\int_{B(0,\rho)}|y|^{2}\,\nu(x,dy)\right)^{\kappa+1}}d\rho=\infty\quad\textrm{for some}\ r>0,\end{equation} and \eqref{eq5.6} holds  if \begin{equation}\label{eq5.11}\int_{r}^{\infty}\frac{\rho^{2\kappa-d+1}}{\left(\rho^{2}\inf_{x\in\R^{d}}\nu(x,B^{c}(0,\rho))+\inf_{x\in\R^{d}}\int_{B(0,\rho)}|y|^{2}\,\nu(x,dy)\right)^{\kappa+1}}d\rho<\infty\quad\textrm{for some}\ r>0.\end{equation} In particular, \eqref{eq5.6} holds if either one of the following  conditions holds
\begin{equation}\label{eq5.12}\int_{r}^{\infty}\frac{\rho^{-d-1}}{\left(\sup_{x\in\R^{d}}\nu(x,B^{c}(0,\rho))\right)^{\kappa+1}}d\rho<\infty\quad\textrm{for some}\ r>0\end{equation} or
\begin{equation}\label{eq5.13}\int_{r}^{\infty}\frac{\rho^{2\kappa-d+1}}{\left(\inf_{x\in\R^{d}}\int_{B(0,\rho)}|y|^{2}\,\nu(x,dy)\right)^{\kappa+1}}d\rho<\infty\quad\textrm{for some}\ r>0.\end{equation}
\end{proposition}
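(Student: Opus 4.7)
The central object of the proof is the integration-by-parts identity
$$\int_0^{\rho} u\,\nu(x,B^c(0,u))\,du = \frac{\rho^2}{2}\nu(x,B^c(0,\rho)) + \frac{1}{2}\int_{B(0,\rho)} |y|^2\,\nu(x,dy),$$
valid for every $x\in\R^d$ and every $\rho>0$. Once this identity is in hand, the four implications in the proposition follow by taking $\sup_{x\in\R^d}$ or $\inf_{x\in\R^d}$ of its two sides and comparing integrands.

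To prove the identity I would first truncate near the origin and apply Fubini to $\int_\varepsilon^\rho u \int_{|y|>u} \nu(x,dy)\,du$, obtaining, for $0<\varepsilon<\rho$,
$$\int_\varepsilon^{\rho} u\,\nu(x,B^c(0,u))\,du = \frac{\rho^2}{2}\nu(x,B^c(0,\rho)) - \frac{\varepsilon^2}{2}\nu(x,B^c(0,\varepsilon)) + \frac{1}{2}\int_{B(0,\rho)\cap B^c(0,\varepsilon)} |y|^2\,\nu(x,dy).$$
The delicate point is sending $\varepsilon\downarrow 0$, because the boundary term $\frac{\varepsilon^2}{2}\nu(x,B^c(0,\varepsilon))$ is not a priori controlled when the L\'evy measure blows up at the origin. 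I would therefore pinch $\int_0^\rho u\,\nu(x,B^c(0,u))\,du$ between two matching bounds: an upper bound obtained by taking $\liminf_{\varepsilon\downarrow 0}$ of the truncated identity, where the non-positivity of $-\frac{\varepsilon^2}{2}\nu(x,B^c(0,\varepsilon))$ is exploited to discard that term while the truncated $|y|^2$-integral converges monotonically; and a lower bound obtained by noting that $\nu(x,B^c(0,u))\ge\nu(x,B^c(0,\varepsilon))$ for $u\le\varepsilon$ implies $\int_0^\varepsilon u\,\nu(x,B^c(0,u))\,du\ge \frac{\varepsilon^2}{2}\nu(x,B^c(0,\varepsilon))$, which exactly cancels the troublesome boundary term before $\varepsilon\downarrow 0$. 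The two bounds coincide with the claimed right-hand side.

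With the identity established, taking $\sup_x$ of both sides and using $\sup_x(f+g)\le \sup_x f + \sup_x g$ gives
$$\sup_{x\in\R^d} \int_0^{\rho} u\,\nu(x,B^c(0,u))\,du \le \frac{\rho^2}{2}\sup_{x\in\R^d} \nu(x,B^c(0,\rho)) + \frac{1}{2}\sup_{x\in\R^d} \int_{B(0,\rho)} |y|^2\,\nu(x,dy),$$
so after raising to the $(\kappa+1)$-st power and inverting, the integrand of \eqref{eq5.5} pointwise dominates that of \eqref{eq5.10}, whence divergence of \eqref{eq5.10} forces divergence of \eqref{eq5.5}. Symmetrically, the superadditivity $\inf_x(f+g)\ge \inf_x f + \inf_x g$ yields the reversed inequality between the integrands of \eqref{eq5.6} and \eqref{eq5.11}, so convergence of \eqref{eq5.11} forces convergence of \eqref{eq5.6}.

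Finally, for the one-term conditions \eqref{eq5.12} and \eqref{eq5.13}, I would simply observe that discarding either of the two non-negative summands inside the denominator of \eqref{eq5.11} only enlarges the integrand, so finiteness of each one-term integral is more restrictive than, and thus implies, finiteness of \eqref{eq5.11}, hence \eqref{eq5.6}. The only genuine obstacle is the endpoint analysis at $\varepsilon\downarrow 0$ in the proof of the identity, where one must carefully exploit the standing L\'evy-measure integrability $\int\min\{1,|y|^2\}\,\nu(x,dy)<\infty$ without any additional regularity near the origin; the subsequent $\sup$/$\inf$ manipulations and the dropping of summands are entirely routine.
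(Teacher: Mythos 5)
Your derivation of the key identity
\begin{equation*}
\int_0^{\rho} u\,\nu(x,B^{c}(0,u))\,du=\frac{\rho^{2}}{2}\nu(x,B^{c}(0,\rho))+\frac{1}{2}\int_{B(0,\rho)}|y|^{2}\,\nu(x,dy)
\end{equation*}
is exactly the paper's argument: the paper likewise pinches $\int_0^{\rho}u\,\nu(x,B^{c}(0,u))\,du$ between the truncated Fubini identity with the negative boundary term $-\frac{\varepsilon^{2}}{2}\nu(x,B^{c}(0,\varepsilon))$ discarded (upper bound) and the same identity with that term absorbed via $\int_0^{\varepsilon}u\,\nu(x,B^{c}(0,u))\,du\geq\frac{\varepsilon^{2}}{2}\nu(x,B^{c}(0,\varepsilon))$ (lower bound), letting $\varepsilon\downarrow0$ by monotone convergence/Fatou. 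Your subsequent $\sup$/$\inf$ comparisons are correct and give \eqref{eq5.10}$\Rightarrow$\eqref{eq5.5}, \eqref{eq5.11}$\Rightarrow$\eqref{eq5.6} and \eqref{eq5.13}$\Rightarrow$\eqref{eq5.6}; the lost factor $2^{\kappa+1}$ is of course harmless.

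The one place your write-up does not deliver the literal statement is \eqref{eq5.12}. Discarding the second summand from the denominator of \eqref{eq5.11} produces $\int_r^{\infty}\rho^{-d-1}\bigl(\inf_{x\in\R^{d}}\nu(x,B^{c}(0,\rho))\bigr)^{-(\kappa+1)}d\rho<\infty$, i.e.\ the condition with $\inf_x$, whereas \eqref{eq5.12} as printed carries $\sup_x$ and is therefore a \emph{weaker} hypothesis; your ``drop a summand'' step does not reach it. Indeed, \eqref{eq5.12} with $\sup_x$ cannot imply \eqref{eq5.6} in general: if $\nu(x_0,\cdot)\equiv0$ for a single $x_0$ while $\nu(x,\cdot)$ is nontrivial elsewhere, every $\inf_x$ quantity vanishes and the integrand of \eqref{eq5.6} is identically $+\infty$, yet \eqref{eq5.12} may well hold. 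The $\sup$ in \eqref{eq5.12} is evidently a misprint for $\inf$; you should either prove the $\inf$ version (which is what your argument actually establishes) or explicitly flag the discrepancy rather than silently identifying the two.
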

As a direct consequence of the above proposition we get the following.
\begin{corollary}\label{c5.5}Let $\kappa>0$ and  let $\process{F}$  be a
 L\'evy-type  process with    L\'evy measure $\nu(x,dy)$. Assume that there exist $u_0\geq0$ such that $\nu(x,dy)=n(x,|y|)dy$ on $\mathcal{B}(B^{c}(0,u_0))$ for some Borel function $n:\R^{d}\times(0,\infty)\longrightarrow(0,\infty)$ which is decreasing on $(u_0,\infty)$ for all $x\in\R^{d}.$ Then,
\begin{equation}\label{eq5.14}\int_{r}^{\infty}\frac{\rho^{-d\kappa-2d-1}d\rho}{\left(\inf_{x\in\R^{d}}n(x,\rho)\right)^{\kappa+1}}<\infty\end{equation}
implies \eqref{eq5.6}.
\end{corollary}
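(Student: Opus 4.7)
The plan is to invoke Proposition \ref{p5.4}: it suffices to verify condition \eqref{eq5.13}, which then yields \eqref{eq5.6}. So the task reduces to a lower bound on $\inf_{x\in\R^{d}}\int_{B(0,\rho)}|y|^{2}\,\nu(x,dy)$ in terms of $\rho$ and $\inf_{x\in\R^{d}}n(x,\rho)$, after which it is a matter of matching the exponents of $\rho$.

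For any $\rho>u_0$, I would discard the contribution of small jumps and pass to spherical coordinates:
$$\int_{B(0,\rho)}|y|^{2}\,\nu(x,dy)\geq\int_{B(0,\rho)\setminus B(0,u_0)}|y|^{2}n(x,|y|)\,dy=S_{d}\int_{u_0}^{\rho}r^{d+1}n(x,r)\,dr,$$
where $S_{d}$ denotes the surface of the $d$-dimensional unit ball. Since $n(x,\cdot)$ is decreasing on $(u_0,\infty)$ for each $x$, one has $n(x,r)\geq n(x,\rho)$ whenever $u_0\leq r\leq\rho$, so
$$\int_{B(0,\rho)}|y|^{2}\,\nu(x,dy)\geq\frac{S_{d}}{d+2}\bigl(\rho^{d+2}-u_0^{d+2}\bigr)n(x,\rho).$$
Taking infima over $x$ and restricting to $\rho$ large enough that $\rho^{d+2}-u_0^{d+2}\geq\rho^{d+2}/2$, I obtain a constant $c>0$ such that
$$\inf_{x\in\R^{d}}\int_{B(0,\rho)}|y|^{2}\,\nu(x,dy)\geq c\,\rho^{d+2}\inf_{x\in\R^{d}}n(x,\rho).$$

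Substituting this lower bound into the integrand of \eqref{eq5.13} and exploiting the arithmetic identity $2\kappa-d+1-(d+2)(\kappa+1)=-d\kappa-2d-1$ yields
$$\frac{\rho^{2\kappa-d+1}}{\Bigl(\inf_{x\in\R^{d}}\int_{B(0,\rho)}|y|^{2}\,\nu(x,dy)\Bigr)^{\kappa+1}}\leq\frac{1}{c^{\kappa+1}}\cdot\frac{\rho^{-d\kappa-2d-1}}{\bigl(\inf_{x\in\R^{d}}n(x,\rho)\bigr)^{\kappa+1}},$$
which is exactly (up to a constant) the integrand appearing in the hypothesis \eqref{eq5.14}. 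Integrating from $r$ large enough (so that both the boundary correction is absorbed and the hypothesized finite-integral threshold is exceeded), the assumption \eqref{eq5.14} forces \eqref{eq5.13}, and Proposition \ref{p5.4} then delivers \eqref{eq5.6}. The argument is essentially a routine book-keeping exercise in spherical coordinates and exponent arithmetic; the only mildly delicate point is absorbing the lower-order term $u_0^{d+2}$ in the bound, which is disposed of by choosing $r$ sufficiently large.
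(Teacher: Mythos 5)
Your proposal is correct and follows essentially the same route as the paper: restrict to the shell $B(0,\rho)\setminus B(0,u_0)$, pass to spherical coordinates, use the monotonicity of $n(x,\cdot)$ to bound $\int_{u_0}^{\rho}u^{d+1}n(x,u)\,du$ from below by $\tfrac{1}{d+2}(\rho^{d+2}-u_0^{d+2})\,n(x,\rho)$, absorb the $u_0^{d+2}$ correction by taking $r$ large, and then invoke condition \eqref{eq5.13} of Proposition \ref{p5.4}. The exponent arithmetic $2\kappa-d+1-(d+2)(\kappa+1)=-d\kappa-2d-1$ matches the paper's computation exactly.
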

\begin{proof} For any $r>u_0$, we have \begin{align*}\int_{r}^{\infty}\frac{\rho^{2\kappa-d+1}d\rho}{\left(\inf_{x\in\R^{d}}\int_{B(0,\rho)}|y|^{2}\nu(x,dy)\right)^{\kappa+1}}&\leq
\frac{1}{S^{\kappa+1}_d}\int_{r}^{\infty}\frac{\rho^{2\kappa-d+1}d\rho}{\left(\inf_{x\in\R^{d}}\int_{u_0}^{\rho}u^{d+1}n(x,u)du\right)^{\kappa+1}}\\
&\leq
\frac{(d+2)^{\kappa+1}}{S^{\kappa+1}_d}\int_{r}^{\infty}\frac{\rho^{2\kappa-d+1}d\rho}{(\rho^{d+2}-u_0^{d+2})^{\kappa+1}\left(\inf_{x\in\R^{d}}n(x,\rho)\right)^{\kappa+1}}\\&\leq
c\int_{r}^{\infty}\frac{\rho^{-d\kappa-2d-1}d\rho}{\left(\inf_{x\in\R^{d}}n(x,\rho)\right)^{\kappa+1}},\end{align*}
where in the second step we employed the fact that $n(x,u)$ is decreasing in $u$ on $(u_0,\infty)$ and $c>(d+2)^{\kappa+1}r^{(d+2)(\kappa+1)}/S^{\kappa+1}_d(r^{d+2}-u_0^{d+2})^{\kappa+1}$ is
arbitrary. Now, the claim is a direct consequence of Proposition
\ref{p5.4}.
\end{proof}
In the following proposition we slightly generalize Theorem \ref{tm5.3}.
\begin{proposition}\label{p5.6}Let $\kappa>0$ and let $\process{F}$  be a
 L\'evy-type process with  symbol
$q(x,\xi)$ and L\'evy triplet $(0,c(x)I,\nu(x,dy))$, satisfying
\begin{align}\label{eq5.11}\liminf_{|\xi|\longrightarrow0}\inf_{x\in\R^{d}}\int_{\R^{d}}\frac{1-\cos\langle\xi,y\rangle}{|\xi|^{2}}\nu(x,dy)>0.\end{align}
 Then,
\eqref{eq5.13}
implies \eqref{eq4.3}.
\end{proposition}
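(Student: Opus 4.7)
The strategy is to establish a pointwise lower bound on $\inf_{x\in\R^{d}}{\rm Re}\,q(x,\xi)$ in terms of a truncated second moment of the L\'evy measure, and then to convert \eqref{eq4.3} into \eqref{eq5.13} via polar coordinates and a reciprocal change of radial variable. Under the running assumptions of Section~\ref{s4} we have $b(x)\equiv 0$ and $\nu(x,\cdot)$ radial in $y$, so the L\'evy-Khintchine formula yields
$${\rm Re}\,q(x,\xi)=\frac{c(x)}{2}|\xi|^{2}+\int_{\R^{d}}(1-\cos\langle\xi,y\rangle)\,\nu(x,dy)\geq\int_{\R^{d}}(1-\cos\langle\xi,y\rangle)\,\nu(x,dy).$$
The elementary inequality $1-\cos u\geq u^{2}/\pi$ for $|u|\leq\pi/2$ (already used in the proof of Theorem~\ref{tm4.3}) then lets me pass to $\pi^{-1}\int_{B(0,\pi/(2|\xi|))}\langle\xi,y\rangle^{2}\nu(x,dy)$.

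Rotational invariance of $\nu(x,\cdot)$ means that on any centred ball $A$ the matrix $\int_{A}yy^{\top}\nu(x,dy)$ is a scalar multiple of the identity, whence
$$\int_{A}\langle\xi,y\rangle^{2}\,\nu(x,dy)=\frac{|\xi|^{2}}{d}\int_{A}|y|^{2}\,\nu(x,dy).$$
Taking $A=B(0,\pi/(2|\xi|))$ and then the infimum over $x\in\R^{d}$ produces
$$\inf_{x\in\R^{d}}{\rm Re}\,q(x,\xi)\geq\frac{|\xi|^{2}}{\pi d}\inf_{x\in\R^{d}}\int_{B(0,\pi/(2|\xi|))}|y|^{2}\,\nu(x,dy),$$
and the hypothesis \eqref{eq5.11} guarantees that the right-hand side is strictly positive for all $|\xi|$ sufficiently small.

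Finally, I would substitute this lower bound into \eqref{eq4.3}, pass to polar coordinates $\xi=s\omega$ with $d\xi=s^{d-1}\,ds\,d\sigma(\omega)$, and apply the change of variables $\rho=\pi/(2s)$. A direct bookkeeping shows that $s^{d-1-2(\kappa+1)}\,ds$ transforms into a positive constant multiple of $\rho^{2\kappa-d+1}\,d\rho$, and choosing $r>0$ small enough that $\pi/(2r)$ exceeds the starting radius in \eqref{eq5.13} gives
$$\int_{B(0,r)}\frac{d\xi}{\bigl(\inf_{x\in\R^{d}}{\rm Re}\,q(x,\xi)\bigr)^{\kappa+1}}\leq C\int_{\pi/(2r)}^{\infty}\frac{\rho^{2\kappa-d+1}\,d\rho}{\bigl(\inf_{x\in\R^{d}}\int_{B(0,\rho)}|y|^{2}\nu(x,dy)\bigr)^{\kappa+1}},$$
which is finite by \eqref{eq5.13}, establishing \eqref{eq4.3}. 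The only real technical point is keeping track of the exponent in the polar substitution; the role of the hypothesis \eqref{eq5.11} is to keep the denominator in the transformed integral bounded away from zero so that the chain of inequalities is not vacuous.
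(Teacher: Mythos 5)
Your proof is correct and follows essentially the same route as the paper: bound $1-\cos\langle\xi,y\rangle$ below by $\langle\xi,y\rangle^{2}/\pi$ on a ball of radius comparable to $1/|\xi|$, use radiality to replace $\langle\xi,y\rangle^{2}$ by $|\xi|^{2}|y|^{2}/d$, and convert \eqref{eq4.3} into \eqref{eq5.13} by polar coordinates and the reciprocal substitution. The only (harmless) difference is that you discard the diffusion term via the one-sided bound ${\rm Re}\,q(x,\xi)\geq\int(1-\cos\langle\xi,y\rangle)\nu(x,dy)$ at the outset, whereas the paper works with $q(x,\xi)-\tfrac{1}{2}c(x)|\xi|^{2}$ and invokes \eqref{eq5.11} at the end to compare $\inf_{x}q(x,\xi)$ with $\inf_{x}\bigl(q(x,\xi)-\tfrac{1}{2}c(x)|\xi|^{2}\bigr)$; in both versions the hypothesis \eqref{eq5.11} plays only the non-degeneracy role you describe rather than being essential to this one-directional implication.
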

\begin{proof}First, because of the radiality of the function $\xi\longmapsto q(x,\xi)$ for all $x\in\R^{d}$,
\begin{align*}q(x,\xi)-\frac{1}{2}c(x)|\xi|^{2}&=q(x,|\xi|e_i)-\frac{1}{2}c(x)|\xi|^{2}\\
&=\int_{\R^{d}}(1-\cos|\xi|
y_i)\nu(x,dy)\\&\geq\frac{1}{\pi}|\xi|^{2}\int_{B\left(0,\frac{1}{|\xi|}\right)}y_i^{2}\nu(x,dy)\\
&=\frac{1}{d\pi}|\xi|^{2}\int_{B\left(0,\frac{1}{|\xi|}\right)}|y|^{2}\nu(x,dy),\quad \xi\in\R^{d},\end{align*}
where in the third step we used  the fact that $1-\cos u\geq u^{2}/\pi$
for all $|u|\leq1.$ Now, for $r>0$,
\begin{align*}
\int_{B(0,r)}\frac{d\xi}{\left(\inf_{x\in\R^{d}}\left(q(x,\xi)-\frac{1}{2}c(x)|\xi|^{2}\right)\right)^{\kappa+1}}&\leq
d^{\kappa+1}\pi^{\kappa+1}\int_{B(0,r)}\frac{d\xi}{\left(|\xi|^{2}\inf_{x\in\R^{d}}\int_{B\left(0,\frac{1}{|\xi|}\right)}|y|^{2}\nu(x,dy)\right)^{\kappa+1}}\\
&=S_dd^{\kappa+1}\pi^{\kappa+1}\int_{0}^{r}\frac{\rho^{d-1}d\rho}{\left(\rho^{2}\inf_{x\in\R^{d}}\int_{B\left(0,\frac{1}{\rho}\right)}|y|^{2}\nu(x,dy)\right)^{\kappa+1}}
\\&=S_dd^{\kappa+1}\pi^{\kappa+1}\int_{\frac{1}{r}}^{\infty}\frac{\rho^{2\kappa-d+1}d\rho}{\left(\inf_{x\in\R^{d}}\int_{B(0,\rho)}|y|^{2}\nu(x,dy)\right)^{\kappa+1}}.\end{align*}
Finally,
\begin{align*}1&\leq\liminf_{|\xi|\longrightarrow0}\frac{\inf_{x\in\R^{d}}q(x,\xi)}{\inf_{x\in\R^{d}}\left(q(x,\xi)-\frac{1}{2}c(x)|\xi|^{2}\right)}\nonumber\\
&\leq\limsup_{|\xi|\longrightarrow0}\frac{\inf_{x\in\R^{d}}q(x,\xi)}{\inf_{x\in\R^{d}}\left(q(x,\xi)-\frac{1}{2}c(x)|\xi|^{2}\right)}\nonumber\\
&\leq
1+ \frac{\sup_{x\in\R^{d}}c(x)}{\liminf_{|\xi|\longrightarrow0}\inf_{x\in\R^{d}}\int_{\R^{d}}\frac{1-\cos\langle\xi,y\rangle}{|\xi|^{2}}\nu(x,dy)},\end{align*} which together with \eqref{eq5.11} completes the proof.
\end{proof}

Let us now give some applications of the results presented above.

\begin{example}\label{e1}{\rm
 Let  $\process{F}$ be a $d$-dimensional stable-like
                                                         process
with symbol $q(x,\xi)=\gamma(x)|\xi|^{\alpha(x)}.$ The L\'evy measure of   $\process{F}$ is given by $\nu(x,dy)=\delta(x)|y|^{-d-\alpha(x)}dy$, where
$$\delta(x):=\gamma(x)\frac{\alpha(x)
                 2^{\alpha(x)-1}\Gamma((\alpha(x)+1)/2)}{\pi^{1/2}\Gamma(1-\alpha(x)/2)},\quad x\in\R^{d},$$ and $\Gamma(x)$ denotes the Gamma function.
Obviously, $\nu(x,dy)$ satisfies all the assumptions from Theorem \ref{tm5.3}. Thus, by a straightforward application of \eqref{eq5.5} and \eqref{eq5.6} we conclude the same conditions for the $\kappa$-weak and $\kappa$-strong transience of $\process{F}$ obtained in Example \ref{e4.5}.}
\end{example}

\begin{example}\label{e2}{\rm Let $\alpha,\gamma:\R^{d}\longrightarrow(0,\infty)$ be arbitrary bounded and
continuous functions such
that
$\inf_{x\in\R^{d}}\alpha(x)>0$ and $\inf_{x\in\R^{d}}\gamma(x)>0$. Define $n:\R^{d}\times(0,\infty)\longrightarrow(0,\infty)$ by $$n(x,u):=\frac{\gamma(x)}{u^{d+\alpha(x)}}1_{\{v:v\geq 1\}}(u).$$ Because of  continuity of the functions $\alpha(x)$ and $\gamma(x)$, without loss of generality, we can assume that $\int_{\R^{d}}n(x,|y|)dy=1$ for all $x\in\R^{d}.$ Now, there exists an open-set irreducible L\'evy-type process determined by a L\'evy measure and symbol of the form $\nu(x,dy):=n(x,|y|)dy$ and   $q(x,\xi):=\int_{\R^{d}}(1-\cos\langle\xi,y\rangle)\nu(x,dy)$, respectively.
Indeed, let $\chain{F}$ be a $d$-dimensional Markov chain determined by a transition function of the form $p(x,dy):=n(x,|y-x|)$ and let  $\process{N}$
                                                   be a Poisson
                                                   process (with
                                                   parameter
                                                   $1$)
                                                   independent of
                                                   the   chain $\chain{F}$. Then, clearly, the process $F_t:=F_{N_t}$, $t\geq0$, is a strong Markov process with semigroup given by
                                                   $$P_tf(x)=e^{-     t}\sum_{n=0}^{\infty}\frac{
               t^{n}}{n!}\int_{\R^{d}}f(y)p^{n}\left(x,dy\right),\quad  f\in
               B_b(\R^{d}).$$ First, observe that, due to the continuity of the functions $\alpha(x)$ and $\gamma(x)$ and \cite[Proposition 6.1.1]{Meyn-Tweedie-Book-2009} (which states that  $x\longmapsto\int_{\R^{d}}f(y)p(x,dy)$ is a continuous and bounded function for every $f\in C_b(\R^{d})$), $P_t(C_b(\R^{d}))\subseteq C_b(\R^{d})$, $t\geq0$. Here, $C_b(\R^{d})$ denotes the space of  continuous and bounded functions. Next, to see that $P_t(C_\infty(\R^{d}))\subseteq C_\infty(\R^{d})$, $t\geq0$, we proceed as follows. Fix $f\in C_\infty(\R^{d})$ and $\varepsilon>0$. Since
$C_c(\R^{d})$ is dense in $(C_\infty(\R^{d}),||\cdot||_\infty)$, there exists
$f_\varepsilon\in C_c(\R^{d})$ such that
$||f-f_\varepsilon||_\infty<\varepsilon$. Now, we have
\begin{align*}\left|\int_{\R^{d}}f(y) p(x,dy)\right|&\leq\int_{\R^{d}}
|f(y)-f_\varepsilon(y)+f_{\varepsilon}(y)|p(x,dy)\\&\leq\int_{\R^{d}}
|f_\varepsilon(y)|p(x,dy)+\varepsilon\\&\leq||f_\varepsilon||_\infty\int_{{\rm supp}\,f_\varepsilon-x}n(x,|y|)dy
+\varepsilon.\end{align*}
Thus,
due to the boundedness (away from zero and infinity) of the functions $\alpha(x)$ and $\gamma(x)$ and the fact that ${\rm supp}\,f_\epsilon$ is a compact set, we have that the function $x\longmapsto\int_{\R^{d}}f(y)p(x,dy)$ vanishes at infinity. In particular, $\process{P}$ enjoys the Feller property. Finally, obviously $\process{P}$ is also strongly continuous, the domain of the corresponding generator contains $C_\infty(\R^{d})$ and the corresponding  L\'evy measure and symbol are given by $\nu(x,dy)$ and $q(x,\xi)$, respectively.
To check open-set irreducibility of $\process{F}$ is straightforward.

Put $\underline{\alpha}:=\inf_{x\in\R^{d}}\alpha(x)\leq\sup_{x\in\R^{d}}\alpha(x)=:\overline{\alpha}$.
Recall that, due to \cite[Theorems 2.8 and 3.9]{Sandric-TAMS-2014} and \cite[Theorem 4.4]{Sandric-SPA-2015},
\begin{enumerate}
  \item [(i)]if $d\leq2$, then $\process{F}$ is  transient if $\overline{\alpha}<d.$
  \item [(ii)] if $d\geq3$, then $\process{F}$ is always transient.
\end{enumerate}
Next, fix $\kappa>0$.
  Then, by a straightforward application of  Theorem \ref{tm5.3}, we have
 \begin{enumerate}
   \item [(i)] if $\overline{\alpha}<2$, then  $\process{F}$ satisfies \eqref{eq4.1} if
 $\underline{\alpha}(\kappa+1)\geq d$, and  it satisfies \eqref{eq4.3}  if $\overline{\alpha}(\kappa+1)<d$.
   \item [(ii)] if $\overline{\alpha}=\underline{\alpha}=2$, then  $\process{F}$ satisfies \eqref{eq4.1} if
 $2(\kappa+1)> d$, and  it satisfies \eqref{eq4.3} if $2(\kappa+1)\leq d$.
\item [(iii)] if $\underline{\alpha}>2$, then  $\process{F}$ satisfies \eqref{eq4.1} if
 $2(\kappa+1)\geq d$, and  it satisfies \eqref{eq4.3} if $2(\kappa+1)<d$.
\end{enumerate}
}\end{example}
\begin{example}\label{e3}{\rm
Let $\process{L}$ be a $d$-dimensional  L\'evy process with L\'evy measure of the form $\nu(dy)=n(|y|)dy$, where $n:(0,\infty)\longrightarrow(0,\infty)$ is a decreasing (on $(u_0,\infty)$ for some $u_0\geq0$) and regularly varying function with index $\delta\leq-d$ (that is, $\lim_{u\longrightarrow\infty}n(\lambda u)/n(u)=\lambda^{\delta}$ for all $\lambda>0$). First, recall that, according to \cite[Theorems 3.9 and 3.11]{Sandric-TAMS-2014} and \cite[Proposition 4.5]{Sandric-SPA-2015}, $\process{L}$ is transient if
\begin{enumerate}
  \item [(i)] $d=1,2$, and $-2d<\delta\leq-d$ or $\delta=-2d$ and \begin{equation}\label{eq8}\int_{r}\frac{d\rho}{\rho^{2d+1}n(\rho)}<\infty\quad \textrm{for some}\ r>0.\end{equation}
  \item [(ii)] $d\geq3.$
\end{enumerate}
Further, observe that, due to \cite[Theorem 1.5.11]{Bingham-Goldie-Teugels-Book-1987}, for any $-d-2\leq\delta\leq-d$, $$\lim_{\rho\longrightarrow\infty}\frac{\nu(B^{c}_\rho(0))}{\rho^{d}n(\rho)}=\frac{1}{S_d(-d-\delta)}\quad\textrm{and}\quad \lim_{\rho\longrightarrow\infty}\frac{\int_{B_\rho(0)}|y|^{2}\nu(dy)}{\rho^{d+2}n(\rho)}=\frac{1}{S_d(d+\delta+2)}.$$
Consequently,  Proposition \ref{p5.4}, Corollary \ref{c5.5} and \cite[Proposition 1.3.6]{Bingham-Goldie-Teugels-Book-1987} yield that
\begin{itemize}
  \item[(i)]  if  $d\geq3$ and $\delta<-d-2$, then $\process{L}$ is $\kappa$-weakly transient if, and only if, $2(\kappa+1)\geq d$.
 \item[(ii)] if $d=1,2$, $\delta=-2d$ and \eqref{eq8} holds, then $\process{L}$ is $\kappa$-weakly transient if, and only if, $2(\kappa+1)>d$.
\item[(iii)] if $d\geq3$ and $\delta=-d-2$, then $\process{L}$ is $\kappa$-weakly transient if, and only if, $2(\kappa+1)>d$.
 \item [(iv)] if  $d=1$ and $-2<\delta<-1$, then $\process{L}$ is $\kappa$-weakly transient if, and only if, $\kappa+2+\delta(\kappa+1)\leq0$.
  \item [(v)] if  $d\geq2$ and $-d-2<\delta<-d$, then $\process{L}$ is $\kappa$-weakly transient if, and only if, $d(\kappa+2)+\delta(\kappa+1)\leq0$.
   \item[(vi)] if $\delta=-d$, then $\process{L}$ is always $\kappa$-strongly transient.
\end{itemize}

}\end{example}

Finally, as a consequence of Theorem \ref{tm5.3} we can conclude the following comparison conditions for the $\kappa$-weak and $\kappa$-strong transience.
\begin{theorem}  \label{tm5.7}
Let $\kappa>0$ and let $\process{F}$ and  $\process{\bar{F}}$ be
L\'evy-type processes with  symbols $q(x,\xi)$ and
$\bar{q}(x,\xi)$ and  L\'evy measures $\nu(x,dy)$ and
$\bar{\nu}(x,dy)$, respectively. Assume  that there exists
$u_0\geq0$ such that:
\begin{enumerate}
\item [(i)] $\nu(x,dy)=n(x,|y|)dy$ on $\mathcal{B}(B^{c}(0,u_0))$ for some Borel function $n:\R^{d}\times(0,\infty)\longrightarrow(0,\infty)$ which is decreasing on $(u_0,\infty)$ for all $x\in\R^{d}$;
\item [(ii)] $\nu\left(x,B^{c}(0,u)\right)\geq\bar{\nu}\left(x,B^{c}(0,u)\right)$ for all  $x\in\R^{d}$ and all $u>u_0$.
 \end{enumerate}   Then,
$$\int_{B(0,r)}\frac{d\xi}{\left(\sup_{x\in\R^{d}}q(x,\xi)\right)^{\kappa+1}}=\infty\quad \textrm{for all}\ r>0$$
implies
$$\int_{B(0,r)}\frac{d\xi}{\left(\sup_{x\in\R^{d}}\bar{q}(x,\xi)\right)^{\kappa+1}}=\infty \quad \textrm{for all}\ r>0.$$
In addition, if $q(x,\xi)$ satisfies \eqref{eq5.2}, then
$$\int_{B(0,r)}\frac{d\xi}{\left(\inf_{x\in\R^{d}}\bar{q}(x,\xi)\right)^{\kappa+1}}<\infty\quad \textrm{for some}\ r>0$$
implies
$$\int_{B(0,r)}\frac{d\xi}{\left(\inf_{x\in\R^{d}}q(x,\xi)\right)^{\kappa+1}}<\infty\quad \textrm{for some}\ r>0.$$
\end{theorem}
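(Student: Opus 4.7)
My plan is to reduce both implications in Theorem \ref{tm5.7} to comparison statements for the radial tail integrals appearing in \eqref{eq5.5} and \eqref{eq5.6}, and then invoke Theorem \ref{tm5.3} to switch between conditions on the symbol and conditions on the L\'evy measure. Since $\nu(x,dy)$ satisfies the density--monotonicity assumption (i), Theorem \ref{tm5.3} gives the full equivalences \eqref{eq4.1} $\Leftrightarrow$ \eqref{eq5.5} for $\nu$ and \eqref{eq4.3} $\Leftrightarrow$ \eqref{eq5.6} for $\nu$; for $\process{\bar{F}}$, on which no such structural assumption is imposed, only the one-sided implications \eqref{eq5.5} $\Rightarrow$ \eqref{eq4.1} and \eqref{eq4.3} $\Rightarrow$ \eqref{eq5.6} are automatic, but these are precisely the directions needed below.

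For the weak transience statement, I would start from \eqref{eq4.1} for $q$, pass to \eqref{eq5.5} for $\nu$ via Theorem \ref{tm5.3}, and aim to deduce \eqref{eq5.5} for $\bar{\nu}$. The comparison step is the splitting, for $\rho>u_0$ and any $x\in\R^{d}$,
\[
\int_0^\rho u\,\bar{\nu}(x,B^{c}(0,u))\,du=\int_0^{u_0} u\,\bar{\nu}(x,B^{c}(0,u))\,du+\int_{u_0}^\rho u\,\bar{\nu}(x,B^{c}(0,u))\,du.
\]
Assumption (ii) bounds the second summand by $\int_{u_0}^\rho u\,\nu(x,B^{c}(0,u))\,du \le \int_0^\rho u\,\nu(x,B^{c}(0,u))\,du$, while condition (\textbf{C2}) combined with the integration-by-parts identity used in the proof of Proposition \ref{p5.4} makes the first summand bounded uniformly in $x$ by a finite constant $C=C(u_0)$. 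Taking $\sup_{x\in\R^{d}}$ yields
\[
\sup_{x\in\R^{d}}\int_0^\rho u\,\bar{\nu}(x,B^{c}(0,u))\,du\le C+\sup_{x\in\R^{d}}\int_0^\rho u\,\nu(x,B^{c}(0,u))\,du,
\]
so that the integrand in \eqref{eq5.5} for $\bar{\nu}$ dominates, up to a bounded factor, the integrand for $\nu$ for all large $\rho$, and divergence transfers. Theorem \ref{tm5.3} then gives \eqref{eq4.1} for $\bar{q}$.

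For the strong transience statement I would run the same comparison but take $\inf_{x\in\R^{d}}$, obtaining
\[
\inf_{x\in\R^{d}}\int_0^\rho u\,\nu(x,B^{c}(0,u))\,du\ge \inf_{x\in\R^{d}}\int_0^\rho u\,\bar{\nu}(x,B^{c}(0,u))\,du-C.
\]
Starting from \eqref{eq4.3} for $\bar{q}$, Theorem \ref{tm5.3} produces \eqref{eq5.6} for $\bar{\nu}$; the above comparison then upgrades this to \eqref{eq5.6} for $\nu$, and a final application of Theorem \ref{tm5.3} (using assumption (i)) delivers \eqref{eq4.3} for $q$.

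The main obstacle is a mild case split in the strong transience part: the lower bound $\inf_{x}(\cdot)\ge\inf_{x}(\cdot)-C$ is only useful when its right-hand side grows to infinity with $\rho$. In the remaining regime where $\inf_{x\in\R^{d}}\int_0^\rho u\,\bar{\nu}(x,B^{c}(0,u))\,du$ stays bounded, finiteness of \eqref{eq5.6} for $\bar{\nu}$ forces $2(\kappa+1)<d$, so that $\rho^{2\kappa-d+1}$ is already integrable at infinity; the extra hypothesis \eqref{eq5.2} is precisely what is needed to ensure that $\inf_{x\in\R^{d}}\int_0^\rho u\,\nu(x,B^{c}(0,u))\,du$ remains bounded below by a strictly positive constant, so that the integrand in \eqref{eq5.6} for $\nu$ is controlled by a constant multiple of $\rho^{2\kappa-d+1}$ and convergence follows. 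Matching this degenerate regime to the non-degeneracy content of \eqref{eq5.2} is the only point at which some care is required; the remainder of the argument is a routine chain of implications through Theorem \ref{tm5.3}.
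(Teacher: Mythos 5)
Your overall strategy is exactly the one the paper intends (the paper offers no details beyond ``as a consequence of Theorem \ref{tm5.3}''): pass from the symbol conditions \eqref{eq4.1}, \eqref{eq4.3} to the tail-integral conditions \eqref{eq5.5}, \eqref{eq5.6}, compare $\int_0^{\rho}u\,\nu(x,B^{c}(0,u))\,du$ with $\int_0^{\rho}u\,\bar{\nu}(x,B^{c}(0,u))\,du$ using hypothesis (ii) up to an additive error on $[0,u_0]$ that is bounded uniformly in $x$ by (\textbf{C2}), and convert back. That comparison step, including the observation that the additive constant is harmless because $\sup_{x}\int_0^{\rho}u\,\nu(x,B^{c}(0,u))\,du$ is non-decreasing and eventually bounded away from $0$ (since $n>0$ on $(u_0,\infty)$), is correct. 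Two caveats on the bookkeeping: for the weak-transience half you invoke the ``full equivalence'' of Theorem \ref{tm5.3} under (i) alone, whereas that theorem as stated also requires \eqref{eq5.2}; the direction you actually need, \eqref{eq4.1} $\Rightarrow$ \eqref{eq5.5}, does hold under (i) alone via the pointwise lower bound $q(x,\xi)\geq c\,|\xi|^{2}\int_0^{1/|\xi|}u\,\nu(x,B^{c}(0,u))\,du$ for radial decreasing densities, but you should say this rather than quote an equivalence whose hypotheses you have not verified. Similarly, the final step \eqref{eq5.6} for $\nu$ $\Rightarrow$ \eqref{eq4.3} for $q$ uses the converse direction of Theorem \ref{tm5.3}, which needs both (i) and \eqref{eq5.2}; you have both in that half, so just cite them.

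The genuine gap is in your treatment of the degenerate regime of the strong-transience half. You claim that \eqref{eq5.2} ``is precisely what is needed to ensure that $\inf_{x\in\R^{d}}\int_0^{\rho}u\,\nu(x,B^{c}(0,u))\,du$ remains bounded below by a strictly positive constant.'' That does not follow: \eqref{eq5.2} is a lower bound on $\liminf_{|\xi|\to0}\inf_{x}q(x,\xi)/|\xi|^{2}$, and this non-degeneracy can be carried entirely by the diffusion coefficient or the small jumps. For instance, $\nu(x,dy)=\varepsilon(x)\,n_0(|y|)\,dy$ with $\varepsilon>0$ everywhere but $\inf_{x}\varepsilon(x)=0$ and $c(x)\equiv1$ satisfies (i) and \eqref{eq5.2}, yet $\inf_{x}\int_0^{\rho}u\,\nu(x,B^{c}(0,u))\,du=0$ for every $\rho$, so \eqref{eq5.6} for $\nu$ fails and your route through it collapses. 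The repair is to bypass \eqref{eq5.6} for $\nu$ in this regime: as you correctly note, boundedness of $\inf_{x}\int_0^{\rho}u\,\bar{\nu}(x,B^{c}(0,u))\,du$ together with finiteness of \eqref{eq5.6} for $\bar{\nu}$ forces $d>2(\kappa+1)$; then \eqref{eq5.2} gives $\inf_{x}q(x,\xi)\geq c\,|\xi|^{2}$ for $|\xi|$ small, hence
\begin{equation*}
\int_{B(0,r)}\frac{d\xi}{\left(\inf_{x\in\R^{d}}q(x,\xi)\right)^{\kappa+1}}\leq c^{-(\kappa+1)}\int_{B(0,r)}\frac{d\xi}{|\xi|^{2(\kappa+1)}}<\infty,
\end{equation*}
which is \eqref{eq4.3} for $q$ directly, with no appeal to the Lévy measure of $\process{F}$ at all.
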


 \section*{Acknowledgement}
 This work was supported by the Croatian Science Foundation under Grant 3526 and NEWFELPRO Programme  under Grant 31.
The author thanks the referee for   helpful comments
and careful corrections.

\bibliographystyle{alpha}
\bibliography{References}

\end{document}